\documentclass[reqno]{amsart}
\usepackage{graphicx} % Required for inserting images
\usepackage{amsmath}%
\usepackage{amsfonts}%
\usepackage{amssymb}%
\usepackage{graphicx}
\usepackage{mathrsfs}
\usepackage{hyperref}
\usepackage{fullpage}
\usepackage{cite}
\usepackage{mathrsfs}
\usepackage{amsmath}
\usepackage{amssymb}
\usepackage{amsthm}
\usepackage{amsfonts}
\usepackage{amssymb}
\usepackage{amsmath}
\usepackage{enumitem}
\usepackage{mathtools}
\usepackage{mathrsfs}
\usepackage{color}
\usepackage{graphicx}

\allowdisplaybreaks
\def\0{\emptyset}

\def\0{\emptyset}

\newtheorem{theorem}{Theorem}
\newtheorem{lemma}[theorem]{Lemma}

\newtheorem{claim}[theorem]{Claim}

\newtheorem{conjecture}[theorem]{Conjecture}
\newtheorem{definition}[theorem]{Definition}

\newtheorem{problem}[theorem]{Problem}
\newtheorem{remark}[theorem]{Remark}

\usepackage{graphicx}
\usepackage{amsmath}
\usepackage{color}

\begin{document}

\begin{center}
{\LARGE A Quadratic Vertex Threshold for Isolated Cliques in the Minimum Degree Kruskal-Katona Problem  for 3-Uniform Hypergraphs}%
\end{center}

\vspace{0.3em}

\begin{center}
{\large Haorui Liu\,\textsuperscript{a}, \ Mei Lu\,\textsuperscript{a,\,1}, \ Yi Zhang\,\textsuperscript{b,c,\,$\ast$,\,2}}
\end{center}

\vspace{0.6em}

% ===== 单位 =====
\begin{center}
\begin{minipage}{0.92\textwidth}
\small
\begin{tabular}{@{}r@{\ }p{0.95\linewidth}@{}}
\textsuperscript{a} & Department of Mathematical Sciences, Tsinghua University, Beijing, 100084, P.R.\ China \\[0.25em]
\textsuperscript{b} & School of Mathematical Sciences, Beijing University of Posts and Telecommunications, Beijing, 100876, P.R.\ China \\[0.25em]
\textsuperscript{c} & Key Laboratory of Mathematics and Information Networks (BUPT), Ministry of Education, Beijing, 100876, P.R.\ China
\end{tabular}
\end{minipage}
\end{center}

% ===== 第一页底部脚注 =====
\renewcommand{\thefootnote}{\fnsymbol{footnote}}
\setcounter{footnote}{1}
\footnotetext[1]{Corresponding author: Yi Zhang.\\
\emph{E-mail addresses:} \texttt{lhr22@mails.tsinghua.edu.cn} (H.\ Liu), \texttt{lumei@tsinghua.edu.cn} (M.\ Lu), \texttt{shouwangmm@sina.com} (Y.\ Zhang).}
\renewcommand{\thefootnote}{\arabic{footnote}}
\setcounter{footnote}{0}
\footnotetext[1]{Mei Lu was supported by National Natural Science Foundation of China under Grant No.~12171272 and Beijing Natural Science Foundation 1262010.}
\footnotetext[2]{Yi Zhang is supported by Fundamental Research Funds for the Central Universities and Innovation Foundation of BUPT for Youth under Grant No.~2023RC49 and National Natural Science Foundation of China under Grant No.~11901048 and 12071002.}

\vspace{1.2em}

% ===== 摘要（手动排版，不要用 \begin{abstract}） =====
\begin{center}
\begin{minipage}{0.92\textwidth}
\small
\noindent\textbf{Abstract.} Given a set $X$ and an integer $t$, let $\mathcal{F}$ be a family of $k$-subsets of $X$.
The Kruskal-Katona theorem states that if $|\mathcal{F}|\geq \binom{t}{k}$, then
$|\partial_{k-1}\mathcal{F}|\geq\binom{t}{k-1}$. The minimum degree version of this problem
asks: if $\delta(\mathcal{F})\geq \binom{t}{k-1}$, how small can $|\partial_{k-1}\mathcal{F}|$
be? In this article, for the case $k=3$, we prove that, for every sufficiently large integer \(t\),   every extremal hypergraph for
this problem contains an isolated copy of $K_{t+1}^3$ whenever $|X| \geq ct^2 + o(t^2)$, with
the constant $c = 1 + \sqrt{928/33}$.
Our proof uses a graph transformation that
regularizes the neighborhood structure of extremal graphs, reducing
the problem to a counting argument on the neighbors of a
disjoint clique family. 
This gives a quadratic-order
threshold for the every-extremal version of the problem, compared with the
cubic-order threshold of F\"{u}redi and Zhao
[SIAM J.\ Discrete Math.\ 36(4), 2022]. 
\end{minipage}
\end{center}

\vspace{0.8em}

\medskip
\noindent% ===== 关键词 & MSC =====
\begin{center}
\begin{minipage}{0.92\textwidth}
\noindent\textbf{Key words.} Shadow, Hypergraph, Kruskal--Katona theorem, Minimum degree, Shifting

\medskip
\noindent\textbf{MSC codes.} 05D05, 05C65, 05C35
\end{minipage}
\end{center}

\medskip

\section{Introduction}

Let $X$ be a finite set and let $\binom{X}{k}$ denote the collection of all $k$-subsets of $X$.
For a family $\mathcal{F} \subseteq \binom{X}{k}$, the $\ell$-shadow of $\mathcal{F}$, denoted by $\partial_\ell \mathcal{F}$,
is defined as the family of all $\ell$-subsets of $X$ that are contained in at least one member of $\mathcal{F}$. For any real number $x \ge k$, we define the generalized binomial coefficient as $\binom{x}{k} = \frac{x(x-1)\cdots(x-k+1)}{k!}$.

\begin{theorem}[Lovász Version of the Kruskal-Katona Theorem\cite{Lovász}]\label{lovas} If $\mathcal{F}$ is a $k$-uniform family with $|\mathcal{F}| \ge \binom{x}{k}$ for some real $x \ge k$, then for any $\ell \le k$, the size of its $\ell$-shadow satisfies
    \[ |\partial_\ell \mathcal{F}| \ge \binom{x}{\ell}, \]
    and equality occurs if and only if $x$ is an integer and $\mathcal{F}$ is the family of all $k$-subsets of some $x$-element set.
\end{theorem}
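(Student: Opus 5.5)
We prove Lovász's theorem by induction on $k$, using the standard shifting (compression) technique. First, reduce to $\ell=k-1$. If $|\partial_{k-1}\mathcal{F}|\ge\binom{x}{k-1}$ holds for every $k$, then iterating gives $|\partial_{k-2}\mathcal{F}|=|\partial_{k-2}(\partial_{k-1}\mathcal{F})|\ge\binom{x}{k-2}$, and so on down to $\ell$. The equality characterization for general $\ell<k$ likewise follows from the case $\ell=k-1$: equality at level $\ell$ forces equality at each intermediate level. At level $k-1$ that means $\partial_{k-1}\mathcal{F}$ is a complete $(k-1)$-graph on $x$ vertices, and hence $\mathcal{F}\subseteq\binom{[x]}{k}$ has size $\binom{x}{k}$. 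The case $\ell=k$ is trivial, and the base case $k=1$ is immediate.

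\textbf{Shifting.} For $i<j$, the shift $S_{ij}$ replaces $j$ by $i$ in each $F\in\mathcal{F}$ with $j\in F$, $i\notin F$, provided the resulting set is not already in $\mathcal{F}$. Shifting preserves $|\mathcal{F}|$, and the standard fact $\partial_{k-1}S_{ij}(\mathcal{F})\subseteq S_{ij}(\partial_{k-1}\mathcal{F})$ shows it does not increase the shadow. Since $\sum_{F}\sum_{v\in F}v$ strictly decreases under each nontrivial shift, after finitely many shifts we may assume $\mathcal{F}$ is shifted, i.e.\ stable under all $S_{ij}$.

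\textbf{Main step.} Split $\mathcal{F}=\mathcal{F}_0\cup\mathcal{F}_1$, where $\mathcal{F}_1=\{F\setminus\{1\}:1\in F\in\mathcal{F}\}$ is a $(k-1)$-family and $\mathcal{F}_0$ consists of the sets avoiding $1$. Shiftedness gives $\partial_{k-1}\mathcal{F}_0\subseteq\mathcal{F}_1$, and we also have
\[
|\partial_{k-1}\mathcal{F}|\ge|\mathcal{F}_1|+|\partial_{k-2}\mathcal{F}_1|,
\]
where the first term counts shadow sets containing $1$ and the second counts shadow sets of the form $G\setminus\{1\}$. We first show $|\mathcal{F}_1|\ge\binom{x-1}{k-1}$. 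If not, then $|\mathcal{F}_0|\ge\binom{x}{k}-|\mathcal{F}_1|>\binom{x-1}{k}$, so by induction on $|\mathcal{F}|$ (or on the ground set) applied to $\mathcal{F}_0$ we get $|\partial\mathcal{F}_0|\ge\binom{x-1}{k-1}$. Since $\partial\mathcal{F}_0\subseteq\mathcal{F}_1$, this contradicts the assumption. The contradiction needs a little care: let $y$ be defined by $|\mathcal{F}_0|=\binom{y}{k}$ and use the monotonicity of $\binom{\cdot}{k}$. Then induction on $k$ gives $|\partial_{k-2}\mathcal{F}_1|\ge\binom{x-1}{k-2}$, so $|\partial_{k-1}\mathcal{F}|\ge\binom{x-1}{k-1}+\binom{x-1}{k-2}=\binom{x}{k-1}$ by Pascal's identity, which holds for real $x$.

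\textbf{Equality case, the main obstacle.} Shifting can change the structure of $\mathcal{F}$, so the characterization must be pulled back through the shifts. We first show that equality for a shifted family forces $|\mathcal{F}_1|=\binom{x-1}{k-1}$ and equality in the inductive bound. By induction, $\mathcal{F}_1$ is then complete on an $(x-1)$-set, so $x$ is an integer, and $\mathcal{F}$ is $\binom{[x]}{k}$. It remains to show that if $S_{ij}(\mathcal{F})$ is a clique and $|\partial\mathcal{F}|=|\partial S_{ij}\mathcal{F}|$, then $\mathcal{F}$ is a clique. We handle this by a direct check: if $\mathcal{F}$ is not a clique, some $(k-1)$-set containing $j$ but not $i$ lies in $\partial\mathcal{F}$ while its image does not merge, which strictly enlarges the shadow. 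Alternatively, one can avoid shifting in the equality case by a direct double-counting argument on degrees.
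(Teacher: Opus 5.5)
The paper gives no proof of this statement; it is quoted from Lov\'asz's book. The paper itself only uses the trivial case $k=2$, $\ell=1$: a vertex of degree $t$ lying in at least $\binom{t}{2}$ triangles has a complete link. Your proof of the inequality is the standard shifting argument (essentially Frankl's) and is correct. That includes the device of defining $y$ by $|\mathcal{F}_0|=\binom{y}{k}$, which keeps the induction hypothesis applicable when $x<k+1$. The verbal description of the two terms in $|\partial_{k-1}\mathcal{F}|\ge|\mathcal{F}_1|+|\partial_{k-2}\mathcal{F}_1|$ is off: $\mathcal{F}_1$ accounts for the shadow sets avoiding $1$, and $\partial_{k-2}\mathcal{F}_1$ for those containing $1$. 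The inequality itself is right.

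The genuine gap is the pull-back step of the equality case, which you flag but do not prove. The sentence about a set containing $j$ whose image ``does not merge'' is not an argument. Moreover, no argument of the form ``a nontrivial shift onto a clique strictly shrinks the shadow'' can work. If $i\in Y$ and $j\notin Y$, then $S_{ij}$ maps the clique $\binom{(Y\setminus\{i\})\cup\{j\}}{k}$ onto $\binom{Y}{k}$ with no change in shadow size.

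Here is what is needed. If $S_{ij}\mathcal{F}=\binom{Y}{k}$, the shift is trivial unless $i\in Y$ and $j\notin Y$. In that case $\mathcal{F}=\mathcal{A}\cup\{B\cup\{j\}:B\in\mathcal{B}\}$, with $\mathcal{B}\subseteq\binom{Y\setminus\{i\}}{k-1}$ and $\mathcal{A}=\binom{Y}{k}\setminus\{B\cup\{i\}:B\in\mathcal{B}\}$. A direct count shows that $|\partial_{k-1}\mathcal{F}|-\binom{x}{k-1}$ equals the number of $C\in\partial_{k-2}\mathcal{B}$ such that $C\cup\{y\}\notin\mathcal{B}$ for some $y\in Y\setminus(C\cup\{i\})$. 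The witnesses of strict growth are the sets $C\cup\{j\}$ whose image $C\cup\{i\}$ is \emph{already} in $\partial_{k-1}\mathcal{F}$, so they cannot move; they are not sets whose image is free. Equality therefore forces $\mathcal{B}$ to be closed under single-element exchanges inside $Y\setminus\{i\}$. By connectivity of the Johnson graph, $\mathcal{B}=\emptyset$ or $\mathcal{B}=\binom{Y\setminus\{i\}}{k-1}$, so $\mathcal{F}$ is one of the two cliques above. With this lemma you can run backwards along the shifting sequence. The alternative ``double counting on degrees'' is not specified, so it does not fill the gap.

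Two boundary points also need correcting. First, the equality clause as stated is false for $\ell=k$: any family of size $\binom{x}{k}$ attains it, e.g.\ two edges with $k=2$ and $x=(1+\sqrt{17})/2$. It is also false for $\ell=0$. So ``the case $\ell=k$ is trivial'' is wrong for the equality part. Your propagation argument uses strict monotonicity of $\binom{\cdot}{\ell}$ and establishes the characterization precisely for $1\le\ell\le k-1$, which is the correct range. Second, and for the same reason, the equality induction cannot start at $k=1$. For $k=2$, the step ``by induction $\mathcal{F}_1$ is complete'' would invoke the vacuous case $\ell=0$. There, however, $|\mathcal{F}_1|=x-1$ already gives that $x$ is an integer and that $\mathcal{F}_1$ is all singletons of an $(x-1)$-set.
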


A family $\mathcal{F}$ of $k$-subsets of $X$ can also be regarded as a $k$-uniform hypergraph with vertex set $X$. A $k$-uniform hypergraph (briefly, $k$-graph) $H$ is a pair $(V, E)$, where $V = V (H)$ is a finite set of vertices and $E = E(H)$ is a family of $k$-element subsets of $V$. Given a set $S \subseteq V$, the degree $\deg_H(S)$ of $S$ is the number of edges of $H$ that contain $S$. We write $\deg(S)$ when $H$ is clear from the context. If $S = \{u\}$, then we write $\deg(u)$ instead of $\deg(\{u\})$. Let $\delta(H) = \min\{ \deg(u) : u \in V (H)\}$.  We define $N_H(u) = \{v \in V(H) : v \neq u \text{ and } \exists\, e \in E(H) \text{ with } \{u,v\} \subseteq e\}$, and for any set $S \subseteq V$, define $N_H(S) = \bigcup_{u \in S} N_H(u)$.  The \textit{induced subgraph} of $H$ on $S$, denoted by $H[S]$, is the $k$-graph with vertex set $S$ and edge set
$E(H[S]) = \{e \in E(H) : e \subseteq S\}$. A \textit{clique} of $H$ is a set $S \subseteq V(H)$ such that every $k$-element subset of $S$ is an edge of $H$, i.e., $\binom{S}{k} \subseteq E(H)$.

Recently,
 extremal problems under a minimum or maximum degree condition have received considerable attention. Frankl\cite{Fra2} studied the Erd{\H{o}}s--Ko--Rado theorem under the condition of a maximum degree. Frankl and Tokushige\cite{Fra4}, Huang and Zhao\cite{Huang1}, Kupavskii\cite{Kup} and Huang and Zhang \cite{Huang2} studied the minimum degree version of the Erd{\H{o}}s--Ko--Rado theorem. Frankl, Han, Huang and Zhao\cite{Fra3} studied the minimum degree version of the Hilton-Milner theorem. In addition, there has been extensive research on the degree version of the Erd{\H{o}}s matching conjecture\cite{Han, Kha1, Mar, Kuhn1, Pik, Rod2, Rod3, TrZh12, TrZh13,wang, Yi4, zhang, zhang2} and Tur\'{a}n problems\cite{Lo, Lo1, Si, Mu}. Chase \cite{Ch} studied the Kruskal-Katona theorem under a condition of maximum degree.

 The minimum degree version of the Kruskal--Katona theorem was initiated by F\"uredi and Zhao \cite{Fu2}, who posed the following two equivalent problems.

\begin{problem}[F\"uredi--Zhao \cite{Fu2}]\label{prob:FZ-shadow}
Given an integer $k \geq 3$ and $t\geq k-1$, let $X$ be a set of $n$ vertices and $\mathcal{F}$ be a family of $k$-subsets of $X$ with $\delta(\mathcal{F}) \geq \binom{t}{k-1}$. Determine the minimum possible size of the $(k-1)$-shadow $|\partial_{k-1} \mathcal{F}|$.
\end{problem}

\begin{problem}[F\"uredi--Zhao \cite{Fu2}]\label{13}
Given an integer $k \geq 3$ and $t\geq k-1$, determine the minimum number of edges in a $(k-1)$-graph $G$ on $n$ vertices such that every vertex is contained in at least $\binom{t}{k-1}$ copies of $K^{k-1}_k$ (the complete $(k-1)$-graph on  $k$ vertices).
\end{problem}

The equivalence follows from the
correspondence between a $k$-uniform family $\mathcal{F}$ and the
$(k-1)$-graph $G$ whose edges are the $(k-1)$-subsets in
$\partial_{k-1}\mathcal{F}$: the minimum degree condition on
$\mathcal{F}$ translates directly into a clique-degree condition on
$G$. For a detailed proof of this equivalence, we refer the interested
reader to~\cite{Fu2}.

In this paper, we focus on the $k=3$ case, which
corresponds to the following problem: determine
the minimum number of edges in a graph $G$ on $n$ vertices such that
every vertex is contained in at least $\binom{t}{2}$ triangles.

This is closely related to the classical Rademacher--Tur\'{a}n problem, initiated by Rademacher (unpublished) and Erd\H{o}s \cite{Erdos}, which seeks the minimum number of triangles in a graph of given order and size. A natural variant is to consider the minimum number of triangles containing a fixed vertex.
F\"uredi and Zhao \cite{Fu2} studied this problem and determined the extremal graphs for $t+1 \leq n \leq 2(t+1)$. As the precise statement is rather elaborate, we refer the interested reader to \cite{Fu2} for details. For large $n$, they further proved the following structural result.

\begin{theorem}\cite{Fu2}\label{pro}
    When $n>\frac{1}{4}(t+1)^2(t+2)$ and $t \ge 2$ is an integer, every extremal graph for Problem \ref{13} with $k=3$  contains an isolated copy of $K_{t+1}$.
\end{theorem}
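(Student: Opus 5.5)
We work in the graph formulation of Problem \ref{13} with $k=3$. Let $G$ be an extremal graph on $n>\frac14(t+1)^2(t+2)$ vertices in which every vertex lies in at least $\binom{t}{2}$ triangles and $e(G)$ is minimum. The strategy is a local-replacement argument. If $G$ has no isolated $K_{t+1}$, we will find a set $S$ of exactly $t+1$ vertices. We then delete every edge incident to $S$ and turn $S$ into a clique. The goal is to check that the new graph still satisfies the triangle condition and has strictly fewer edges, contradicting extremality.

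\textbf{Step 1 (lower bound on degrees).} Every vertex $v$ satisfies $\binom{d(v)}{2}\ge e(G[N(v)])\ge\binom t2$, so $d(v)\ge t$. Next suppose $d(v)=t$. Then $N(v)$ must be a clique of size $t$, so $\{v\}\cup N(v)$ is a $K_{t+1}$.

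\textbf{Step 2 (counting).} Comparing with the disjoint union of $\lfloor n/(t+1)\rfloor$ copies of $K_{t+1}$, plus a small patch for the remainder, gives
\[
e(G)\le \frac{n t}{2}+O(t^3).
\]
Hence the average degree is at most $t+O(t^3/n)$. For $n$ in the stated range, $O(t^3/n)$ is below roughly $4/(t+1)$ up to constants. Write $L$ for the set of vertices of degree exactly $t$ and $W$ for the rest. Since $\sum_v (d(v)-t)\le O(t^3)$, we get $|W|=O(t^3)$, while $|L|$ is almost all of $V$. Each vertex of $L$ lies in a $K_{t+1}$ by Step 1.

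\textbf{Step 3 (finding a nearly isolated clique).} Take a vertex $v\in L$ and let $Q=\{v\}\cup N(v)$, which is a $K_{t+1}$. Any vertex $u\in Q\cap L$ has all $t$ of its neighbours inside $Q$. So the only edges leaving $Q$ come from vertices of $Q\cap W$. Call $Q$ \emph{bad} if it is not isolated; then it contains a $W$-vertex with an outside neighbour.

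The key count is to bound the number of distinct $(t+1)$-cliques meeting $W$. Each vertex of $W$ of degree $d$ lies in at most $d/t$ disjoint such cliques. The total excess $\sum_{w\in W}(d(w)-t)=O(t^3)$ must pay for them. Each bad clique forces at least one excess unit, and more carefully about $t$ units, since its outside edges must create triangles for the outside endpoint. The threshold $\frac14(t+1)^2(t+2)$ is where the number of $(t+1)$-cliques that can be supported, roughly $n/(t+1)$, exceeds what the excess budget can make bad. A pigeonhole argument then gives a clique $Q$ none of whose vertices is in $W$, or none with outside edges. Such a $Q$ is an isolated $K_{t+1}$, a contradiction.

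\textbf{Main obstacle.} The hardest part is the exact accounting in Step 3, namely showing that each non-isolated $K_{t+1}$ costs about $\frac{(t+1)(t+2)}{4}$ excess edges. This amounts to a local optimisation. A vertex outside $Q$ adjacent to $Q$ needs its own $\binom t2$ triangles. The cheapest configurations are two cliques glued, or cliques sharing a vertex, and these cost on the order of $t^2/4$ extra edges. I would first prove this local lemma by analysing $Q\cup N(Q)$ and replacing it with disjoint cliques, which is the replacement argument from the opening paragraph. I would then sum the cost over bad cliques to reach $n\le\frac14(t+1)^2(t+2)$.
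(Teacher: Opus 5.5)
\textbf{There is a genuine gap.} Step 3 never becomes an argument, and the local lemma you plan to prove there is false.

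A non-isolated $K_{t+1}$ need not cost anything like $\frac{(t+1)(t+2)}{4}$. Two copies of $K_{t+1}$ sharing one vertex are both non-isolated, yet the shared vertex is the only one of degree above $t$, and its excess is exactly $t$. In fact $\frac{(t+1)(t+2)}{4}$ exceeds the \emph{entire} budget $\sum_v(d(v)-t)\le\frac14(t+1)^2$ of Lemma~\ref{d}. So your lemma would forbid non-isolated $K_{t+1}$'s in every extremal graph, and that is false. Take $n\equiv 1\pmod{t+1}$: there the budget is at most $t$. If every $K_{t+1}$ were a component, the vertices outside those components would form a nonempty set of vertices of degree $\ge t+1$, all of whose neighbours stay inside the set. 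That set would have size $\ge t+2$ and hence excess $>t$.

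Step 2 has the complementary defect. An $O(t^3)$ excess bound cannot produce the exact constant $\frac14(t+1)^2(t+2)$. The sharp input comes from comparing with $q-1$ disjoint $K_{t+1}$'s plus two $K_{t+1}$'s overlapping in $t+1-r$ vertices, where $n=q(t+1)+r$ and $0\le r\le t$. That graph has degree sum $tn+r(t+1-r)\le tn+\frac14(t+1)^2$.

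\textbf{The missing idea} is to count degree-$t$ vertices rather than cliques; no replacement or local optimisation is needed. The paper only quotes this theorem from F\"uredi--Zhao, but this is the mechanism behind it, and the same observation is used in Claim~\ref{claim30}.

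Take $L,W$ as in your Step 2, and suppose no $K_{t+1}$ is isolated. For $v\in L$, the clique $\{v\}\cup N(v)$ cannot consist only of degree-$t$ vertices, since it would then be a component. Hence $v$ has a neighbour in $W$, and so $|L|\le\sum_{w\in W}d(w)$. Each $w\in W$ contributes at least $1$ to the excess, so $|W|\le\sum_{w\in W}(d(w)-t)\le\frac14(t+1)^2$. Therefore
\[
n=|L|+|W|\le\sum_{w\in W}d(w)+|W|=(t+1)|W|+\sum_{w\in W}\bigl(d(w)-t\bigr)\le(t+2)\cdot\tfrac14(t+1)^2,
\]
which contradicts $n>\frac14(t+1)^2(t+2)$. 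Your Step 1 is correct and is exactly what makes this count work.
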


Note that by Theorem~\ref{pro}, if $n > \frac{1}{4}(t+1)^2(t+2)$, we can delete the vertices in the isolated copy of $K_{t+1}$ from $V(G)$ and study the remaining part of $G$. Iterating this procedure reduces the number of vertices until $n \leq \frac{1}{4}(t+1)^2(t+2)$. On the other hand, F\"{u}redi and Zhao \cite{Fu2}  determine the extremal graphs for $n \leq 2(t+1)$. There remains a gap between $2(t+1)$ and $\frac{1}{4}(t+1)^2(t+2)$. Thus, closing the gap would completely solve Problem~\ref{13} for $k=3$.  
In this article, for sufficiently large \(t\), we replace this cubic threshold
by a quadratic one for the same every-extremal conclusion.

For our proof, we  use the following result of F\"uredi and Zhao \cite{Fu2}, which provides an upper bound on the total
``excess'' degree sum in any extremal graph. In particular, it implies
that all but at most $O(t^2)$ vertices have degree exactly $t$, so
each such vertex belongs to a unique copy of $K_{t+1}$.

\begin{lemma}[F\"uredi--Zhao \cite{Fu2}]\label{d}
Let $G$ be an extremal graph for Problem~\ref{13} with $k=3$. Then
\[
  \sum_{v \in V(G)} \bigl(\deg(v) - t\bigr)
  \;\leq\; \tfrac{1}{4}(t+1)^2.
\]
\end{lemma}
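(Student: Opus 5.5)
The plan is to sandwich $2e(G)$ between a trivial lower bound and an explicit construction. Write $\mathrm{ex}(n)$ for the minimum number of edges in Problem~\ref{13} with $k=3$. Since $G$ is extremal, $2e(G)=2\,\mathrm{ex}(n)$. The lower bound comes from the degree condition: a vertex $v$ lies in at most $\binom{\deg(v)}{2}$ triangles, so the requirement of $\binom{t}{2}$ triangles forces $\deg(v)\ge t$ for every $v$. Then
\[
\sum_{v\in V(G)}\bigl(\deg(v)-t\bigr)=2e(G)-nt ,
\]
and every summand is nonnegative. It therefore suffices to exhibit, for each $n\ge t+1$ (for smaller $n$ no admissible graph exists), a graph $H$ on $n$ vertices in which every vertex lies in at least $\binom{t}{2}$ triangles and $2e(H)-nt\le \tfrac14(t+1)^2$. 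Minimality then gives $2e(G)-nt\le 2e(H)-nt$, which is the claim.

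For the construction, write $n=q(t+1)+r$ with $q\ge 1$ and $0\le r\le t$. If $r=0$, take $q$ disjoint copies of $K_{t+1}$; every degree is exactly $t$, so the excess is $0$. If $r\ge 1$, set $s=t+1-r$, so that $1\le s\le t$. Take $q-1$ disjoint copies of $K_{t+1}$ together with a gadget on $2(t+1)-s=t+1+r$ vertices, built as follows.
\begin{enumerate}
\item Start with a clique $S$ of size $s$ and two disjoint cliques $W_1,W_2$, each of size $t+1-s$.
\item Join every vertex of $S$ to every vertex of $W_1\cup W_2$.
\end{enumerate}
The vertex count is $s+2(t+1-s)=t+1+r$, as required.

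Next I check the triangle condition. A vertex $w\in W_i$ has neighbourhood $S\cup (W_i\setminus\{w\})$. This set has size $s+(t-s)=t$ and is itself a clique, so $w$ lies in exactly $\binom{t}{2}$ triangles. A vertex of $S$ has degree $(s-1)+2(t+1-s)=2t+1-s\ge t+1$. Its neighbourhood contains the clique $(S\setminus\{x\})\cup W_1$ of size $t$, so it lies in at least $\binom{t}{2}$ triangles. Vertices in the disjoint copies of $K_{t+1}$ are clearly fine.

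The only excess comes from the $s$ vertices of $S$, each of which has degree exceeding $t$ by $t+1-s$. Hence
\[
2e(H)-nt=s(t+1-s)\le \tfrac14(t+1)^2
\]
by AM--GM. This completes the argument.

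The main obstacle is finding a gadget whose excess stays quadratic uniformly in $r$. The naive choice of attaching $r$ extra vertices to $t$ vertices of a $K_{t+1}$ has excess $rt$, which can be of order $t^2$ but with a worse constant than $\tfrac14$. The "two cliques sharing a common core $S$" gadget balances the two regimes, and the product $s(t+1-s)$ is maximized at $s=(t+1)/2$.
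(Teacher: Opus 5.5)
Your proof is correct. The identity $\sum_{v}(\deg(v)-t)=2e(G)-nt$, together with $e(G)\le e(H)$ for the admissible graph $H$, bounds the excess by $s(t+1-s)\le\frac14(t+1)^2$; here $H$ consists of $q-1$ disjoint copies of $K_{t+1}$ and two copies of $K_{t+1}$ sharing $s=t+1-r$ vertices. This uses that $t$ is an integer, which is the paper's setting, and the lemma genuinely needs it, since for non-integer $t$ the excess is at least $n(\lceil t\rceil-t)$. The paper gives no proof of its own and only cites F\"uredi--Zhao, and your comparison with this overlapping-cliques construction is the standard way the bound is obtained.
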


The main result of this paper is the following improvement of
Theorem~\ref{pro}, reducing the threshold from $O(t^3)$ to $O(t^2)$.

\begin{theorem}\label{3}
Let $c=1+\sqrt{928/33}.$
For every sufficiently large integer \(t\), if
$n\ge ct^2+o(t^2),$ then every extremal graph for Problem~\ref{13} with $k = 3$ contains an isolated copy of $K_{t+1}$.
\end{theorem}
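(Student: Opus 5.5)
We argue by contradiction. Fix an extremal graph $G$ on $n\ge ct^2+o(t^2)$ vertices and suppose it has no isolated copy of $K_{t+1}$.

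\textbf{Step 1: excess vertices.} Call a vertex \emph{excess} if $\deg(v)>t$, and let $B$ be the set of excess vertices. By Lemma~\ref{d}, $|B|\le \tfrac14(t+1)^2$ and $\sum_{v\in B}(\deg(v)-t)\le\tfrac14(t+1)^2$. Every vertex has degree at least $t$, since it lies in $\binom t2$ triangles. So every vertex $u\notin B$ has degree exactly $t$, and its $\binom t2$ triangles force $N(u)$ to be a clique. Hence $u$ lies in a unique $K_{t+1}$, namely $Q_u=\{u\}\cup N(u)$. The cliques $Q_u$ with $u\notin B$ form a family $\mathcal Q$ of distinct cliques, and any two of them are disjoint. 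Indeed, a vertex $u\notin B$ lying in two such cliques would have degree above $t$. Each clique of $\mathcal Q$ touching another is therefore met only by vertices of $B$.

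\textbf{Step 2: a transformation.} Since no clique of $\mathcal Q$ is isolated, each $Q\in\mathcal Q$ contains a vertex of $B$, or has a vertex of degree $t$ adjacent outside $Q$. The second option is impossible, because such a vertex would have degree exceeding $t$. So each $Q\in\mathcal Q$ contains at least one excess vertex whose extra neighbours lie outside $Q$.

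We now transform $G$ without increasing $e(G)$ and without violating the triangle condition, in order to regularize this structure. The goal is that every excess vertex belongs to at most one clique of $\mathcal Q$, and that the non-clique part is spanned by $B$ and the "bad" vertices. The natural move is a shifting or cloning step: replace the neighbourhood of one vertex by that of a twin with fewer extra edges, which keeps extremality. This lets us assume each $Q\in\mathcal Q$ meets $B$ in a set $S_Q$, and that all edges leaving $Q$ come from $S_Q$.

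\textbf{Step 3: counting.} Let $m=|\mathcal Q|$. Vertices not covered by $\mathcal Q$ lie in $B$, so
\[
m(t+1)\ \ge\ n-\tfrac14(t+1)^2 .
\]
Each $Q$ contributes at least one edge leaving it, and the total excess is at most $\tfrac14(t+1)^2$.

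That alone gives only $m\lesssim t^2/4$, so we need a sharper local count. Cross edges $uv$ with $u\in Q$ and $v\in Q'$ are only worthwhile if they create triangles. Let $s_Q=|S_Q|$. The vertices of $S_Q$ have total excess $e_Q$ and neighbours in $h_Q$ other cliques.

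We then show that deleting all cross edges at $Q$, and compensating the vertices of $Q'$ that lose triangles, would save edges unless
\[
e_Q\gtrsim \sqrt{\alpha\,t\cdots}.
\]
Here $\alpha$ is an explicit quadratic-optimization constant. Summing over $Q$ and comparing with $\sum e_Q\le 2\cdot\tfrac14(t+1)^2$ (each cross edge is counted twice) bounds $m$ by a constant times $t$. The non-$\mathcal Q$ part is also $O(t^2)$, and optimizing the resulting quadratic inequality in the ratio $n/t^2$ should produce $c=1+\sqrt{928/33}$. Then $n\ge ct^2+o(t^2)$ gives a contradiction.

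\textbf{Main obstacle.} The hard step is the local exchange argument in Step 3: showing that a clique with small excess at its boundary can be "detached" at a net gain of edges. A removed cross edge $uv$ destroys triangles at vertices of other cliques and at other excess vertices, and these must be repaired cheaply. I would first try replacing each boundary vertex's external neighbourhood by an internal completion, and bound the repair cost via Theorem~\ref{lovas} applied to the link graphs. I expect the precise constant $928/33$ to come from optimizing this trade-off.
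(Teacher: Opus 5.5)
There is a genuine gap: your Step~3 is where the entire proof has to happen, and it is only a placeholder. No detaching construction is given, the inequality on $e_Q$ is not derived, and $928/33$ is asserted rather than computed. The mechanism you sketch also aims at a configuration that does not occur. In an extremal graph there are no edges between $A_1\setminus A_2$ and $A_2\setminus A_1$ for any two copies $A_1,A_2$ of $K_{t+1}$ (Claim~\ref{gr}). The reason is that the map $\mathcal{G}(G,A_1,A_2)$ moves every outside neighbourhood onto a nested initial segment of $A_1$ and deletes all such cross edges. It still preserves the triangle condition, because nested segments maximise common neighbourhoods (Lemma~\ref{G}).

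So the real difficulty is excess vertices \emph{outside} the clique family that each touch several cliques. Nothing rules out an individual clique carrying only $O(1)$ excess; the paper's count explicitly budgets up to $\frac34 t$ such cliques via Lemma~\ref{d1}. Hence no per-clique lower bound on $e_Q$ of order $t$ is available. If your bound is of order $\sqrt t$, summing it against $\sum_Q e_Q=O(t^2)$ gives only $m=O(t^{3/2})$, i.e.\ a threshold of order $t^{5/2}$, not $t^2$.

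The missing idea is a \emph{global} count of low-excess cliques. The paper splits the (essentially disjoint) cliques at excess $\theta=\frac{\sqrt{33}}{2\sqrt{928}}(t+1)$. Those with excess at least $\theta$ number at most $(t+1)^2/(4\theta)$ by Lemma~\ref{d}. Those below $\theta$ are bounded by a greedy peeling: an outside vertex $u_i$ with the most neighbours $f_i(u_i)$ in the surviving cliques removes every clique it touches. Three lemmas control this process.
\begin{itemize}
\item Lemma~\ref{id}: a vertex with at least about $\sqrt{2\theta}$ (resp.\ $\sqrt{3\theta}$) such neighbours touches at most three (resp.\ two) cliques.
\item Lemma~\ref{b}: $f$ must fall by a factor $\alpha=4/7$ within $O(f)$ steps. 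Otherwise an independent set $X_0$ among the peeling vertices can be made a clique while its edges to $V(\mathcal{D})$ are deleted, for a net saving of edges.
\item Lemma~\ref{d1}: the phase $f=1$ lasts at most $\frac34 t$ steps.
\end{itemize}
Summing the geometric decay gives $\frac{32}{3}\Lambda+\frac{224}{33}\theta$ with $\Lambda\le 2\theta$, which is where $\frac{928}{33}$ comes from. Balancing $\frac{928}{33}\theta$ against $(t+1)^2/(4\theta)$ then produces $c=1+\sqrt{928/33}$.

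Two smaller points. Your Step~1 claim that the cliques $Q_u$ are pairwise disjoint is false, since two of them may share excess vertices (as your next sentence concedes). The bound $m(t+1)\ge n-|B|$ survives, but summing excess over cliques then double counts; this is why the paper invokes Lemma~23 of \cite{liu} to pass to a disjoint subfamily. Also, the ``shifting or cloning'' move of Step~2 is never specified, nor checked against the triangle condition.
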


Since Problems~\ref{prob:FZ-shadow} and~\ref{13} are equivalent,
Theorem~\ref{3} may be restated in terms of 3-uniform hypergraphs as
follows.

\begin{theorem}\label{3-uniform-version}
Let $c=1+\sqrt{928/33}.$
For every sufficiently large integer \(t\), if
$|X|=n\ge ct^2+o(t^2),$
then  every extremal \(3\)-uniform hypergraph  for
Problem~\ref{prob:FZ-shadow} with $k=3$   contains an isolated copy of \(K^3_{t+1}\).
\end{theorem}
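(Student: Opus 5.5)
Theorem~\ref{3-uniform-version} is the hypergraph restatement of Theorem~\ref{3}, so the plan is to deduce it from Theorem~\ref{3} through the correspondence between Problems~\ref{prob:FZ-shadow} and~\ref{13} recalled above (see \cite{Fu2}). Given $\mathcal{F}\subseteq\binom{X}{3}$ with $\delta(\mathcal{F})\ge\binom{t}{2}$, let $G$ be the graph on $X$ with edge set $\partial_2\mathcal{F}$. Every triple of $\mathcal{F}$ is a triangle of $G$, so each vertex lies in at least $\binom t2$ triangles of $G$, and $e(G)=|\partial_2\mathcal{F}|$.

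Conversely, for a graph $G$ in which every vertex lies in at least $\binom t2$ triangles, let $\mathcal{T}(G)$ be the family of its triangles. Then $\delta(\mathcal{T}(G))\ge\binom t2$ and $|\partial_2\mathcal{T}(G)|\le e(G)$. Hence the two optima coincide.

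Now take an extremal $\mathcal{F}$ on $n\ge ct^2+o(t^2)$ vertices and its shadow graph $G$. First I show that $G$ is extremal for Problem~\ref{13}. The inequality $e(G)=|\partial_2\mathcal F|$ equals the common optimum, and $G$ is admissible, so $G$ is extremal. Theorem~\ref{3} then gives a set $S$ with $|S|=t+1$ such that $G[S]=K_{t+1}$ and no edge of $G$ joins $S$ to $X\setminus S$.

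It remains to convert this back into a statement about $\mathcal{F}$. Since no pair meeting both $S$ and $X\setminus S$ lies in $\partial_2\mathcal F$, every triple of $\mathcal{F}$ lies entirely inside $S$ or entirely outside it. A vertex $v\in S$ therefore has all its $\mathcal{F}$-edges among the $\binom{t}{2}$ triples of $S$ containing $v$. The condition $\deg_{\mathcal F}(v)\ge\binom t2$ then forces all of them to be present. Consequently $\binom{S}{3}\subseteq\mathcal{F}$, so $\mathcal{F}[S]=K^3_{t+1}$, and this copy is isolated because no triple of $\mathcal F$ meets both $S$ and $X\setminus S$.

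The only point needing care is the extremality transfer in the second paragraph: it relies on equality of the two optima, not merely on an inequality in one direction. I expect no real obstacle beyond that; all the substance sits in Theorem~\ref{3}.
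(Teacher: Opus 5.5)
Your proof is correct and follows the paper's route. The paper just invokes the equivalence of Problems~\ref{prob:FZ-shadow} and~\ref{13} to restate Theorem~\ref{3}. You make that deduction explicit: you show the two optima are equal, so the shadow graph of an extremal $\mathcal F$ is extremal, and you show that an isolated $K_{t+1}$ on $S$ in the shadow graph together with $\delta(\mathcal F)\ge\binom t2$ forces $\binom{S}{3}\subseteq\mathcal F$ with no triple of $\mathcal F$ crossing $S$. These are exactly the details the paper leaves implicit, and your verification of them is sound.
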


\begin{remark}
The assumption that \(t\) is sufficiently large is used only to
avoid tracking inessential lower-order terms and boundary constants throughout
the proof. In fact, the argument works once \(t\) is larger than some fixed
absolute constant; no asymptotically growing lower bound on \(t\) is required.
\end{remark}

\section{A useful construction}

In this section, we introduce a graph transformation, denoted by $\mathcal{G}$, which allows us to regularize the structure of an extremal graph while maintaining its edge minimality and minimum triangle-degree condition. The transformation relies on
a specific reassignment of neighborhoods.

\begin{definition}\label{11}
     Let $G = (V, E)$ be an extremal graph for Problem \ref{13} with $k=3$. Suppose $A_1, A_2 \subseteq V$ (not necessarily distinct) both induce a $K_{t+1}$ in $G$, with $A_1 = \{v_1, v_2, \dots, v_{t+1}\}$. Let $B=N (A_1 \cup A_2)\setminus (A_1 \cup A_2)$. We define the transformed graph $G' = \mathcal{G}(G, A_1, A_2)$ through the following operations:

\begin{itemize}
    \item[(i)]  Set $V(G') = V$.
    \item[(ii)]  The induced subgraphs $G'[A_1]$ and $G'[A_2]$ remain $K_{t+1}$, while all edges between $A_1 \setminus A_2$ and $A_2 \setminus A_1$ are removed.% ensuring Property (3).
    \item[(iii)] Set $G'[V \setminus (A_1 \cup A_2)] = G[V \setminus (A_1 \cup A_2)]$.
    \item[(iv)]  For each vertex $u \in B$, we set $N_{G'}(u) \cap (A_2 \setminus A_1) = \emptyset$. Its neighborhood within $A_1$ is defined as $N_{G'}(u) \cap A_1 = \{v_1, v_2, \dots, v_{m_u}\}$, where $m_u = \min\{ |N_G(u) \cap (A_1 \cup A_2)|, t+1 \}$.
\end{itemize}
\end{definition}

\begin{lemma}\label{G}
     The following properties hold for $G' = \mathcal{G}(G, A_1, A_2)$.

\begin{enumerate}
    \item[(1)] \textbf{Edge Non-increase:} $|E(G')| \le |E(G)|$.
    \item[(2)] \textbf{Edge Confinement:} Both $A_1$ and $A_2$ induce a  $K_{t+1}$ in $G'$. Furthermore, for every  $e \in E(G')$, if $e \cap (A_2 \setminus A_1) \neq \emptyset$, then $e \subseteq A_2$.
    \item[(3)] \textbf{Clique Shifting:} The construction \textbf{ensures} that for any $A_3 \in \binom{V(G)}{t+1}$ that induces a $K_{t+1}$ in $G$, its ``shifted'' counterpart
    \[ A_4 = (A_3 \setminus (A_1 \cup A_2)) \cup \{v_{1}, v_2, \dots, v_{m}\} \]
    also induces a $K_{t+1}$ in $G'$, where $m = |A_3 \cap (A_1 \cup A_2)|$.
    \item[(4)] \textbf{Maintenance of Triangle Condition:} Every vertex in $G'$ remains contained in at least $\binom{t}{2}$ triangles.
\end{enumerate}
\end{lemma}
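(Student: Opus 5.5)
The plan is to compare $G$ and $G'$ region by region. Write $U=A_1\cup A_2$, so $V$ splits as $U$, $B$, and $W=V\setminus(U\cup B)$. By construction, every edge of $G'$ lies in one of three places: inside $U$, inside $V\setminus U$ (where $G'$ agrees with $G$ by (iii)), or between $B$ and $A_1$. I read (ii) together with (iv) as saying that $G'[U]$ is exactly the union of the two cliques on $A_1$ and $A_2$. Vertices of $W$ have no neighbours in $U$ in either graph.

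\textbf{Property (1).} I would count edges in three groups.
\begin{itemize}
\item Inside $U$: $G[U]$ contains both cliques, so $e(G[U])\ge 2\binom{t+1}{2}-\binom{|A_1\cap A_2|}{2}=e(G'[U])$.
\item Outside $U$: the edge sets coincide.
\item Between $U$ and $B$: $G$ has $\sum_{u\in B}|N_G(u)\cap U|$ such edges, while $G'$ has $\sum_{u\in B} m_u$. Each term satisfies $m_u\le |N_G(u)\cap U|$.
\end{itemize}
Adding the three groups gives (1).

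\textbf{Property (2).} This follows directly from the description above. A vertex of $A_2\setminus A_1$ receives no edges from $B$ by (iv), none from $W$, and none from $A_1\setminus A_2$ by (ii). So all its edges stay inside $A_2$.

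\textbf{Property (3).} Let $A_3$ be a $(t+1)$-clique of $G$ and $m=|A_3\cap U|$.
\begin{itemize}
\item If $m=0$, then $A_4=A_3$, which lies in the unchanged part of the graph.
\item If $m\ge1$, every $u\in A_3\setminus U$ is adjacent to some vertex of $A_3\cap U$, so $u\in B$. Also $|N_G(u)\cap U|\ge m$, hence $m_u\ge\min(m,t+1)=m$.
\end{itemize}
In the second case, $u$ is joined in $G'$ to $v_1,\dots,v_m$ because neighbourhoods in $A_1$ are initial segments. The set $\{v_1,\dots,v_m\}\subseteq A_1$ is a clique in $G'$, and $A_3\setminus U$ is a clique in $G[V\setminus U]=G'[V\setminus U]$. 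Together these show $A_4$ is a clique in $G'$.

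\textbf{Property (4).} This is the step needing the most care. I would split by where the vertex $w$ lies.
\begin{itemize}
\item $w\in U$: $w$ lies in the $K_{t+1}$ on $A_1$ or on $A_2$, giving $\binom{t}{2}$ triangles.
\item $w\in W$: $N(w)\subseteq V\setminus U$, and all edges there are unchanged, so the triangle count at $w$ is unchanged.
\item $w\in B$ with $m_w=t+1$: $w$ is joined to all of the clique $A_1$, giving $\binom{t+1}{2}\ge\binom t2$ triangles.
\item $w\in B$ with $m_w\le t$: this forces $m_w=d_w:=|N_G(w)\cap U|$.
\end{itemize}

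In the last case I would write the number of triangles at $w$ as $e(N(w))$ and decompose it into three pieces:
\[
e\bigl(N(w)\setminus U\bigr)+\sum_{x\in N(w)\setminus U}|N(x)\cap N(w)\cap U|+e\bigl(N(w)\cap U\bigr).
\]
I compare each piece between $G$ and $G'$.
\begin{itemize}
\item The first piece is unchanged, since $N(w)\setminus U$ is the same set in both graphs and edges outside $U$ are preserved.
\item For the third piece, $G$ has at most $\binom{d_w}{2}$ edges among the $d_w$ neighbours of $w$ in $U$. In $G'$, $N(w)\cap U=\{v_1,\dots,v_{d_w}\}$, which is a clique with exactly $\binom{m_w}{2}=\binom{d_w}{2}$ edges.
\item For the middle piece, consider $x\in B$ (only such $x$ contribute in $G$, and $N(w)\setminus U$ is the same in both graphs). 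In $G$, $|N(x)\cap N(w)\cap U|\le\min(d_x,d_w)$. In $G'$ the two neighbourhoods in $U$ are nested initial segments of $A_1$, so the term equals $\min(m_x,m_w)=\min(m_x,d_w)$. Since $m_x=\min(d_x,t+1)$ and $d_w\le t$, we get $\min(m_x,d_w)\ge\min(d_x,d_w)$.
\end{itemize}
So each piece does not decrease, and $w$ keeps at least $\binom t2$ triangles.

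The main obstacle is this last subcase. The capping at $t+1$ in $m_u$ could in principle lose triangles, and it is exactly the separate treatment of $m_w=t+1$ that handles this.
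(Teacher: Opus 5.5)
Your proposal is correct and follows the paper's proof essentially step for step. For (4) you use the same case split (vertices in $A_1\cup A_2$, vertices outside $A_1\cup A_2\cup B$, and vertices of $B$ according to whether $|N_G(u)\cap(A_1\cup A_2)|\ge t+1$), the same decomposition of triangles by their number of vertices in $A_1\cup A_2$, and the same nested-initial-segment comparison; your explicit edge count for (1), the neighbourhood check for (2), and the observation $m_u\ge m$ in (3) just fill in details the paper treats as following directly from the construction.
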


\begin{proof}
Properties (1) and (2) follow directly from the construction. For Property (3), consider a set $A_3\in \binom{V(G)}{t+1}$ that induces a $K_{t+1}$ in $G$. The vertices of $A_3$ outside $A_1 \cup A_2$ and the edges between them remain unchanged by step (iii). For any vertex $u \in A_3 \setminus (A_1 \cup A_2)$, let $m=|N_G(u)\cap A_3 \cap (A_1 \cup A_2)|$. According to the reassignment in step (iv), these $m$ neighbors are replaced in $G'$ by the initial segment $\{v_1, \dots, v_m\} \subseteq A_1$. Since $A_1$ induces a $K_{t+1}$ in $G'$, all vertices in $\{v_1, \dots, v_m\}$ are automatically pairwise adjacent, and the adjusted adjacency rule ensures that $u$ is adjacent to each of them. Consequently, $A_4$ induces a $K_{t+1}$ in $G'$, satisfying Property (3).

It remains to  verify Property (4), namely that  the minimum triangle-degree condition is preserved. We partition the vertex set $V(G')$ into three cases. If $u \in A_1 \cup A_2$, then by the construction, $u$ belongs to a copy of $K_{t+1}$ in $G'$, which ensures that it is contained in at least $\binom{t}{2}$ triangles. If $u \notin A_1 \cup A_2 \cup B$, the edges incident to $u$ and its local structure are preserved from $G$, so the
triangle-degree of $u$ remains unchanged.

The remaining case is when $u \in B$. Let $t_i$ and $t'_i$ denote the number of triangles containing $u$ that have exactly $i$ vertices in $A_1 \cup A_2$ in $G$ and $G'$, respectively. Note that $t'_0 = t_0$. In  $G$, the triangle-degree of $u$ satisfies:
\[ \binom{t}{2} \le t_0 + t_1 + t_2 \le t_0 + \sum_{w \in B \cap N_G(u)} |N_G(w) \cap N_G(u) \cap (A_1 \cup A_2)| + \binom{|N_G(u) \cap (A_1 \cup A_2)|}{2}. \]

To evaluate the triangle-degree of $u$ in $G'$, we consider the value of $|N_{G'}(u)\cap A_1|$. If $|N_G(u) \cap (A_1 \cup A_2)| \ge t + 1$, then step (iv) ensures that $|N_{G'}(u)\cap A_1|=t+1$ and $u$ is adjacent to every vertex in $A_1$. In this case, $\{u\} \cup A_1$ induces a $K_{t+2}$ in $G'$, and $u$ is contained in at least $\binom{t+1}{2} \ge \binom{t}{2}$ triangles. Assume that $|N_G(u) \cap (A_1 \cup A_2)| \le t$. Then we have $|N_{G'}(u) \cap A_1| = |N_G(u) \cap (A_1 \cup A_2)|$. The reassignment in step (iv) ensures that for any $u, w \in B$, their neighborhoods restricted to $A_1$ are nested initial segments. This nesting maximizes the common neighborhood size such that
\[ |N_{G'}(w) \cap N_{G'}(u) \cap A_1| = \min\{|N_{G'}(w) \cap A_1|, |N_{G'}(u) \cap A_1|\}. \]
By the construction of $G'$, we have $|N_{G'}(w) \cap A_1|=\min\{|N_G(w) \cap (A_1 \cup A_2)|, t+1\}$, and we also know $|N_G(u) \cap (A_1 \cup A_2)| \le t$. Consequently,
\[ |N_{G'}(w) \cap N_{G'}(u) \cap A_1| = \min\{|N_G(w) \cap (A_1 \cup A_2)|, |N_G(u) \cap (A_1 \cup A_2)|\} \ge |N_G(w) \cap N_G(u) \cap (A_1 \cup A_2)|. \]
Furthermore, since $A_1$ induces a $K_{t+1}$ in $G'$, $t'_2 = \binom{|N_{G'}(u) \cap A_1|}{2}$. Combining these, we obtain:
\begin{align*}
t'_0 + t'_1 + t'_2 &= t_0 + \sum_{w \in B \cap N_{G}(u)} \min\{|N_{G'}(w) \cap A_1|, |N_{G'}(u) \cap A_1|\} + \binom{|N_{G'}(u) \cap A_1|}{2} \\
&\ge t_0 + \sum_{w \in B \cap N_G(u)} |N_G(w) \cap N_G(u) \cap (A_1 \cup A_2)| + \binom{|N_G(u) \cap (A_1 \cup A_2)|}{2} \\
&\ge t_0 + t_1 + t_2 \ge \binom{t}{2}.
\end{align*}
This completes the proof of Property (4).
\end{proof}

 \section{Proof of Theorem \ref{3}}

We shall use the following special case of Lemma~23 in~\cite{liu}. 
In the notation of~\cite{liu}, the case \(k=3\) and \(\ell=2\) is precisely
Problem~\ref{13} with \(k=3\).

\begin{lemma}\cite[Lemma~23]{liu}\label{lem23}
Let \(t\ge 2\), and let \(G\) be an extremal graph for Problem~\ref{13}
with \(k=3\) and
\[
|V(G)|>\frac14(t+1)^2+3t+1.
\]
Let \(\mathcal A\) be the family of all vertex sets that induce a copy of
\(K_{t+1}\) in \(G\). Then there is at most one unordered pair
\(\{A_1,A_2\}\subseteq \mathcal A\) such that
\(A_1\ne A_2\) and \(A_1\cap A_2\ne\emptyset\).
\end{lemma}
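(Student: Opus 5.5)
This lemma is cited from \cite{liu}, but here is how I would prove it directly with the tools of this paper. The plan is by contradiction. Suppose there are two distinct unordered pairs $\{A_1,A_2\}\neq\{A_3,A_4\}$ of intersecting, distinct cliques in $\mathcal A$. The idea is to use the transformation $\mathcal{G}$ of Definition~\ref{11} to merge one overlapping pair, and then merge the second pair, saving at least one edge and so contradicting extremality.

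First I would quantify the loss caused by an intersecting pair. If $A_1\neq A_2$ intersect with $|A_1\cap A_2|=s$, where $1\le s\le t$, then the subgraph on $A_1\cup A_2$ already has $2\binom{t+1}{2}-\binom{s}{2}$ edges. It covers only $2(t+1)-s$ vertices, whereas disjoint cliques would cover $2(t+1)$ vertices. Apply $G'=\mathcal G(G,A_1,A_2)$. By Lemma~\ref{G}(1),(4), $G'$ is again feasible with $|E(G')|\le|E(G)|$, so $G'$ is also extremal and equality holds. In $G'$, the vertices of $A_2\setminus A_1$ are confined to $A_2$ by Lemma~\ref{G}(2). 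Hence $A_2$ is a $K_{t+1}$ whose vertices outside $A_1$ have degree exactly $t$.

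Next I would use the size hypothesis $|V(G)|>\frac14(t+1)^2+3t+1$ together with Lemma~\ref{d}. The excess degree sum is at most $\frac14(t+1)^2$, so at least $3t+2$ vertices have degree exactly $t$. A vertex of degree $t$ lies in $\binom t2$ triangles only if its neighbourhood is a clique, so its closed neighbourhood is a $K_{t+1}$ component-like block. The plan is to show that two intersecting pairs force a surplus. Each vertex in $A_1\cap A_2$ has degree at least $2t+1-s$, which contributes $s(t+1-s)$ to the excess sum. A second pair $\{A_3,A_4\}$ contributes a further $s'(t+1-s')$, where $s'=|A_3\cap A_4|$, unless the two pairs overlap on the same vertices. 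Then I would build an explicit cheaper graph. Replace $A_1\cup A_2\cup A_3\cup A_4$ by disjoint copies of $K_{t+1}$, taking spare vertices from the $\ge 3t+2$ degree-$t$ vertices, i.e.\ absorbing a degree-$t$ clique. Reattach the outside neighbours $B$ via the initial-segment rule of Definition~\ref{11}, and check the triangle counts as in the proof of Lemma~\ref{G}(4).

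The main obstacle is the case analysis when the two pairs share cliques, for example $A_2=A_3$, giving a chain $A_1,A_2,A_4$, or when they share vertices. There, merging one pair may destroy the other, and the edge saving must be shown to be strictly positive. I would first try to handle a chain by applying $\mathcal G$ twice, with $A_2$ as the common target. I would then count that $|A_1\cup A_2\cup A_4|$ vertices are covered by $3\binom{t+1}{2}-\binom{s}{2}-\binom{s'}{2}$ edges. The extra $3t+1$ vertices in the hypothesis are exactly what is needed to exhibit a degree-$t$ clique that can be reorganised, giving a strict decrease in edge count. This contradicts extremality.
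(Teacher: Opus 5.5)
The paper gives no proof of this statement (it is quoted from \cite{liu}), so your proposal has to stand on its own. As written it does not: the step that produces the contradiction, an explicit feasible graph with fewer edges than $G$, is never constructed, and the ingredients you name would not yield one.

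\emph{First,} $\mathcal G$ does not merge an intersecting pair. By Lemma~\ref{G}(2) both $A_1$ and $A_2$ remain copies of $K_{t+1}$ in $\mathcal G(G,A_1,A_2)$, so the overlap survives, and by extremality the edge count is unchanged. What you correctly obtain is only a normalisation: $P=A_2\setminus A_1$ has all its edges inside $A_2$. Nothing is saved by ``merging the second pair'' in the same way.

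\emph{Second,} replacing $A_1\cup A_2\cup A_3\cup A_4$ by disjoint copies of $K_{t+1}$ fails. This union has $4(t+1)-s-s'$ vertices, generally not a multiple of $t+1$. It would also destroy outside triangles: the initial-segment rule in Lemma~\ref{G}(4) works only because every attachment of a vertex of $B$ goes into one single clique, and the clique carrying those attachments must stay intact.

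\emph{Third,} degree-$t$ vertices are not spare. Such a vertex's closed neighbourhood is a $K_{t+1}$, but other vertices of that clique may have outside neighbours, so you cannot take its vertices without losing triangles elsewhere. A clique made entirely of degree-$t$ vertices would already be isolated, and the hypothesis does not give you one. The claim that $3t+1$ extra vertices are ``exactly what is needed'' rests on no count. For chains, using the common clique as target in both applications of $\mathcal G$ is the right first move, but nothing follows it.

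The missing idea is to keep the hub cliques intact and transfer the overlap between the two private parts.

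\emph{Reduction.} If the unions of the two pairs meet, some member of one pair meets some member of the other, and you can pass to a cherry $\{A_1,A_2\},\{A_1,A_3\}$ with $A_1$ as target in both applications of $\mathcal G$. By Claim~\ref{gr}, $A_3\not\subseteq A_1\cup A_2$, so Lemma~\ref{G}(3) shows that $A_3$ survives the first application as a shifted clique still meeting $A_1$. In the disjoint case, the second application does not touch the first pair.

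\emph{Setup.} Either way you end with private sets $P,P'$ of sizes $a=t+1-s$ and $a'=t+1-s'$. All edges at $P$ lie in $P\cup S$ and all edges at $P'$ lie in $P'\cup S'$, with $S,S'$ inside intact cliques.

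\emph{Rebuild.} Remove every edge meeting $P\cup P'$, then proceed by cases.
If $s+s'\ge t+2$, make $P\cup P'\cup T$ a clique for some $T\subseteq A_1$ with $|T|=s+s'-t-1$. The net change is $-aa'$ edges.
If $s+s'\le t+1$, make $t+1$ vertices of $P\cup P'$ an isolated $K_{t+1}$, and make the remaining $t+1-s-s'$ vertices, together with $s+s'$ vertices of $A_1$, a $K_{t+1}$. The net change is $-ss'$ edges.

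\emph{Feasibility.} Every vertex of $P\cup P'\cup S\cup S'$ now lies in a $K_{t+1}$, and no other vertex had a neighbour in $P\cup P'$. The new graph is therefore feasible with strictly fewer edges, which is the contradiction. This route does not appear to use the lower bound on $|V(G)|$ at all, so the hypothesis is not where the difficulty lies.
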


Indeed, this follows by taking \((k,\ell)=(3,2)\) in Lemma~23 of~\cite{liu};
the second alternative in that lemma cannot occur when \(\ell=2\).

Suppose, for the sake of contradiction, that $G$ is an extremal graph on $n \ge (c+o(1))t^2$ vertices, where $c = 1 + \sqrt{928/33}$, containing no isolated copy of $K_{t+1}$. Let $\mathcal{A}$ be the family of all vertex sets that induce a $K_{t+1}$ in $G$. We partition $\mathcal{A}$ into $\mathcal{A}_1$ and $\mathcal{A}_2$ by setting
\[
\mathcal{A}_1 := \left\{ A \in \mathcal{A} : \sum_{v \in A} (d(v) - t) < \frac{\sqrt{33}}{2\sqrt{928}}(t+1) \right\}
\]
and $\mathcal{A}_2 := \mathcal{A} \setminus \mathcal{A}_1$.
Let $V_1 = \{v \in V(G) : \deg_G(v) = t\}$ and $V_2 = V(G) \setminus V_1$. By the Kruskal-Katona theorem (Theorem \ref{lovas}), every vertex in $V_1$ is contained in a unique $K_{t+1}$ in $G$.

\begin{claim}\label{gr}
    For any $A_1,A_2\in \mathcal{A}$, there is no edge in $G$ between $A_1\setminus A_2$ and $A_2\setminus A_1$.
\end{claim}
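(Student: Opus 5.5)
The plan is to argue by contradiction using the transformation $\mathcal{G}$ of Definition~\ref{11} together with Lemma~\ref{G}. Suppose $A_1,A_2\in\mathcal{A}$ and there is an edge $xy\in E(G)$ with $x\in A_1\setminus A_2$ and $y\in A_2\setminus A_1$. This forces $A_1\neq A_2$. Set $G'=\mathcal{G}(G,A_1,A_2)$. By Lemma~\ref{G}(4), $G'$ still satisfies the triangle condition of Problem~\ref{13}. By Lemma~\ref{G}(1), $|E(G')|\le|E(G)|$. Since $G$ is extremal, equality $|E(G')|=|E(G)|$ must hold. The goal is to show the inequality is in fact strict, which gives the contradiction.

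The key step is an edge count of the edges incident to $A_1\cup A_2$; by step (iii), the edges outside $A_1\cup A_2$ are unchanged.

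\emph{Inside $A_1\cup A_2$.} In $G'$ the edges are exactly those of the two cliques, so there are $2\binom{t+1}{2}-\binom{|A_1\cap A_2|}{2}$ of them. In $G$ we have these edges plus every edge between $A_1\setminus A_2$ and $A_2\setminus A_1$. That second set is nonempty because it contains $xy$. So $G$ has strictly more edges inside $A_1\cup A_2$.

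\emph{Between $B$ and $A_1\cup A_2$.} Each $u\in B$ has $m_u=\min\{|N_G(u)\cap(A_1\cup A_2)|,t+1\}\le|N_G(u)\cap(A_1\cup A_2)|$ neighbours in $G'$. So this part does not increase.

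Combining the two parts gives $|E(G')|<|E(G)|$, contradicting extremality.

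The one point needing care is that Lemma~\ref{G}(2) and the construction really guarantee $G'[A_1\cup A_2]$ has no edges between $A_1\setminus A_2$ and $A_2\setminus A_1$; this is exactly what step (ii) removes. I also need that no vertex of $V\setminus(A_1\cup A_2\cup B)$ gains neighbours in $A_1\cup A_2$, which holds by steps (iii)–(iv). No other step is delicate; the main (mild) obstacle is making sure the bookkeeping of edges is done per region so that the strict loss from $xy$ is not offset anywhere. Thus the claim follows directly from the extremality of $G$ and Lemma~\ref{G}, with no need for Lemma~\ref{lem23} or the vertex-count hypothesis.
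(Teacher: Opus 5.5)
Your proof is correct and is the same argument as the paper's: apply $\mathcal{G}(G,A_1,A_2)$, invoke Lemma~\ref{G}(4) for the triangle condition, and note that the nonempty set of edges between $A_1\setminus A_2$ and $A_2\setminus A_1$ is deleted, so the edge count strictly drops and extremality is contradicted. Your region-by-region count spells out what the paper only asserts as following ``from the construction'': edges outside $A_1\cup A_2$ are unchanged, each $u\in B$ keeps at most $|N_G(u)\cap(A_1\cup A_2)|$ edges into $A_1\cup A_2$, and there is a strict loss inside $A_1\cup A_2$.
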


\begin{proof}
    Suppose that there are $A_1,A_2\in \mathcal{A}$ and an edge $e\in E(G)$ that has a nonempty intersection with both $A_1\setminus A_2$ and $A_2\setminus A_1$. From the construction of $\mathcal{G}$, it follows that $|E(\mathcal{G}(G,A_1,A_2))|<|E(G)|$. By Lemma \ref{G}, every vertex in $\mathcal{G}(G,A_1,A_2)$ is contained in at least $\binom{t}{2}$ triangles. 
    Thus \(\mathcal G(G,A_1,A_2)\) is an admissible
graph for Problem~\ref{13}  with \(k=3\), but it has fewer edges than \(G\), contradicting
the extremality of \(G\).
\end{proof}

\begin{claim} \label{claim30}
There are at least
\[
\left( \frac{\sqrt{928}}{2\sqrt{33}} + \frac{3}{4} \right)(t+1) + o(t)
\]
pairwise disjoint cliques in $\mathcal{A}_1$.
\end{claim}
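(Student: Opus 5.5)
We count the vertices of $V_1$ and of the cliques, and use Lemma~\ref{d} to bound how many cliques can have large excess. Write $\alpha=\frac{\sqrt{33}}{2\sqrt{928}}$, so that cliques in $\mathcal A_2$ have excess $\sum_{v\in A}(\deg(v)-t)\ge \alpha(t+1)$.

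\textbf{Step 1: covering $V(G)$ by cliques.} Every vertex of $V_1$ lies in a unique member of $\mathcal A$. By Lemma~\ref{d}, $|V_2|\le \frac14(t+1)^2$, since each vertex of $V_2$ contributes at least $1$ to the excess sum. Hence
\[
|V_1|\ge n-\tfrac14(t+1)^2 \ge \bigl(c-\tfrac14\bigr)t^2+o(t^2).
\]
Since $n>\frac14(t+1)^2+3t+1$, Lemma~\ref{lem23} applies. So all but at most one pair of members of $\mathcal A$ are disjoint. After discarding at most two cliques (losing $O(t)$ vertices), we obtain a family $\mathcal A'$ of pairwise disjoint cliques that covers $V_1$ up to $O(t)$ vertices. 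Therefore
\[
|\mathcal A'|\ge \frac{|V_1|-O(t)}{t+1}.
\]

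\textbf{Step 2: few cliques in $\mathcal A_2$.} The members of $\mathcal A'\cap\mathcal A_2$ are disjoint, so their excesses add without double counting. Lemma~\ref{d} then gives
\[
|\mathcal A'\cap\mathcal A_2|\cdot \alpha(t+1)\le \tfrac14(t+1)^2,
\qquad\text{so}\qquad
|\mathcal A'\cap\mathcal A_2|\le \frac{t+1}{4\alpha}=\frac{\sqrt{928}}{2\sqrt{33}}(t+1).
\]

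\textbf{Step 3: combining.} From Steps~1 and~2,
\[
|\mathcal A'\cap \mathcal A_1|\ge \frac{(c-\frac14)t^2}{t+1}-\frac{\sqrt{928}}{2\sqrt{33}}(t+1)+o(t).
\]
Here $c-\tfrac14=\tfrac34+\sqrt{928/33}$, and $\sqrt{928/33}=2\cdot\frac{\sqrt{928}}{2\sqrt{33}}$. So the right-hand side equals
\[
\Bigl(\tfrac34+\tfrac{\sqrt{928}}{2\sqrt{33}}\Bigr)(t+1)+o(t),
\]
which is exactly the bound claimed.

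\textbf{Main obstacle: the accounting in Step 1.} Two points need care.

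First, the bound $|V_2|\le\frac14(t+1)^2$ and the bound in Step 2 both draw on the same excess budget of Lemma~\ref{d}. A sharper argument could split that budget between them, but the crude use of each is already enough for the claimed constant.

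Second, a clique in $\mathcal A'$ that contains $V_2$-vertices still has exactly $t+1$ vertices. So the count $|\mathcal A'|\ge (|V_1|-O(t))/(t+1)$ is valid, because every $V_1$-vertex lies in some clique of $\mathcal A'$, apart from the $O(t)$ vertices of the at most two discarded cliques coming from the exceptional intersecting pair of Lemma~\ref{lem23}. That exceptional pair is where the $o(t)$ error term absorbs the loss.
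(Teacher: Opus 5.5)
Your proposal is correct and follows the paper's argument. You use Lemma~\ref{lem23} to extract a pairwise disjoint subfamily, apply Lemma~\ref{d} twice (to bound $|V_2|$ and to bound the number of disjoint cliques in $\mathcal A_2$), and subtract. The only difference is cosmetic: you divide by $t+1$, whereas the paper divides by $t$, using the absence of an isolated $K_{t+1}$ to say each clique contains at most $t$ vertices of $V_1$. This changes the count by $O(1)$, which is absorbed in the $o(t)$ term.
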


{\bf Note.} To make the calculation more intuitive, we do not simplify $\frac{\sqrt{928}}{2\sqrt{33}}$.

\begin{proof}
By Lemma \ref{lem23}, there exists a subcollection of pairwise disjoint cliques $\mathcal{A}' \subseteq \mathcal{A}$ such that $|\mathcal{A}'| \ge |\mathcal{A}| - 1$.
Since every vertex in \(V_1\) belongs to a unique copy of \(K_{t+1}\) in
\(\mathcal A\), and since \(G\) contains no isolated copy of \(K_{t+1}\),
each member of \(\mathcal A\) contains at most \(t\) vertices of \(V_1\).
Hence $|\mathcal{A}| \ge (n - |V_2|)/t$.

Combined with the bound $|V_2| \le \frac{1}{4}(t+1)^2$ from Lemma \ref{d}, a straightforward calculation yields
\begin{equation} \label{eq:A-total}
|\mathcal{A}'| \ge |\mathcal{A}| - 1 \ge \frac{n - (t+1)^2/4}{t} - 1 = \left( \frac{\sqrt{928}}{\sqrt{33}} + \frac{3}{4} \right)(t+1) + o(t).
\end{equation}

By the definition of $\mathcal{A}_2$, every $A \in \mathcal{A}_2 \cap \mathcal{A}'$ satisfies $\sum_{v \in A} (d(v) - t) \ge \frac{\sqrt{33}}{2\sqrt{928}}(t+1)$. Since the cliques in $\mathcal{A}'$ are pairwise disjoint, we have
\[
|\mathcal{A}_2 \cap \mathcal{A}'| \cdot \frac{\sqrt{33}}{2\sqrt{928}}(t+1) \le \sum_{A \in \mathcal{A}_2 \cap \mathcal{A}'} \sum_{v \in A} (d(v) - t) \le \sum_{v \in V(G)} (d(v) - t).
\]
By Lemma \ref{d}, the rightmost sum is at most $\frac{1}{4}(t+1)^2$. It follows that
\[
|\mathcal{A}_2 \cap \mathcal{A}'| \le \frac{2\sqrt{928}}{\sqrt{33}(t+1)} \cdot \frac{(t+1)^2}{4} = \frac{\sqrt{928}}{2\sqrt{33}}(t+1).
\]
Note that the number of cliques in $\mathcal{A}_1 \cap \mathcal{A}'$ is at least $|\mathcal{A}'| - |\mathcal{A}_2 \cap \mathcal{A}'|$. Subtracting the above estimate from \eqref{eq:A-total}, we obtain the desired lower bound.
\end{proof}

Let $\mathcal{D}$ be the collection of pairwise disjoint cliques in $\mathcal{A}_1$ provided by Claim \ref{claim30}. By Claim \ref{gr},
    for any $A_1,A_2\in \mathcal{D}$, there is no edge in $G$ between $A_1$ and $A_2$. Since $G$  contains no isolated copy of $K_{t+1}$, $N(D)\setminus D\not=\emptyset$ for any $D\in \mathcal{D}$.
We construct a sequence of vertex sets $U_1, U_2, \dots$ and sub-families $ \mathcal{D}_1 \supseteq \mathcal{D}_2 \supseteq \dots$ by initializing $i = 1$ and $\mathcal{D}_1 = \mathcal{D}$, and proceeding as follows:

\begin{enumerate}
    \item[(1)] Let $V(\mathcal{D}_i) = \bigcup_{D \in \mathcal{D}_i} D$ and  $B_i = N(V(\mathcal{D}_i)) \setminus V(\mathcal{D}_i)$.

    \item[(2)] For each $u \in B_i$, let $f_i(u) = |N(u) \cap V(\mathcal{D}_i)|$. Choose a vertex $u_i \in B_i$ that maximizes $f_i(u_i)$.

    \item[(3)] Let $U_i = \{ v \in B_i : N(v) \cap V(\mathcal{D}_i) = N(u_i) \cap V(\mathcal{D}_i) \}$. We denote $a_i = |U_i|$ and write $U_i = \{u_i^1, \dots, u_i^{a_i}\}$ with $u_i^1 = u_i$.

    \item[(4)] Define $\mathcal{D}_{i+1} = \{ D \in \mathcal{D}_i : D \cap N(u_i) = \emptyset \}$. If $\mathcal{D}_{i+1} \neq \emptyset$, increment $i$ and repeat from Step (1); otherwise, let $\xi=i$ be the total number of iterations performed, and the process terminates.
\end{enumerate}

\begin{figure}[!htbp]
\begin{center}
\includegraphics[scale=0.2]{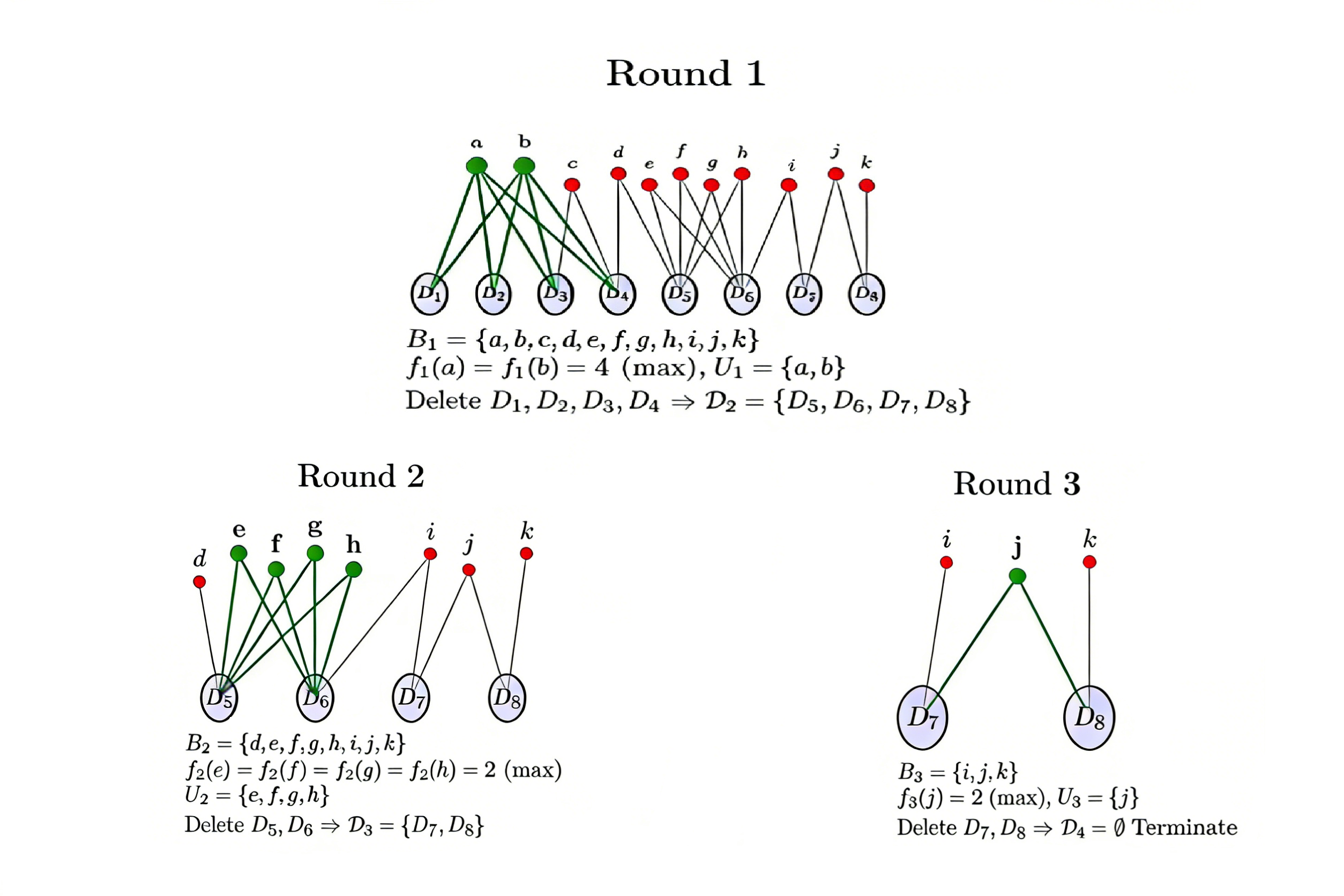}\\
\caption{An example for the construction of $U_i$ and $\mathcal{D}_i$.}\label{Figure1}
\end{center}
\end{figure}

We begin by collecting several technical lemmas needed for the proof of Theorem \ref{3}. To maintain the flow of the main argument, their proofs are deferred to  the end of this section.

Set $\alpha = 4/7$ and $\Lambda=f_1(u_1)$. For each index $1\le i\le\xi$, let $b_i$ be the largest integer satisfying
\begin{equation} \label{def:bi}
f_{i+b_i-1}(u_{i+b_i-1}) \ge
\begin{cases}
     \frac{\lceil 2\alpha f_i(u_i) \rceil}{2}+1  & \text{if } f_i(u_i) \ge 3, \\[8pt]
    f_i(u_i) & \text{if } f_i(u_i) \in \{1, 2\}.
\end{cases}
\end{equation}
The following two lemmas provide an upper bound on the length of these intervals.

% --- Lemma 1 ---
\begin{lemma} \label{b}
%Set $\alpha = 4/7$.
For any $1\le i\le \xi $, if $f_i(u_i) \ge 2$,
then $b_i \le 6\alpha f_i(u_i) + 7$. Furthermore, if $f_i(u_i) \ge 8$, this bound improves to $b_i \le 4\alpha f_i(u_i) + 5$.
\end{lemma}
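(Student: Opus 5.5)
The plan is to argue by contradiction with the extremality of $G$, using the shifting transformation $\mathcal{G}$ of Definition~\ref{11} together with Claim~\ref{gr}. Write $f=f_i(u_i)$ and $b=b_i$, and assume $b>6\alpha f+7$ (respectively $b>4\alpha f+5$ when $f\ge 8$).

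\emph{Step 1: the witness vertices.} The first step records the structure of $u_i,u_{i+1},\dots,u_{i+b-1}$.
\begin{enumerate}
\item[(a)] Since $\mathcal{D}_{j+1}\subseteq\mathcal{D}_j$ and $u_j$ maximizes $f_j$, the sequence $f_j(u_j)$ is non-increasing.
\item[(b)] Hence each $u_j$ in the window has between $\lceil 2\alpha f\rceil/2+1$ and $f$ neighbours in $V(\mathcal{D}_j)$.
\item[(c)] By Step~(4) of the procedure, the cliques of $\mathcal{D}_j$ met by $u_j$ are disjoint from the cliques met by $u_i,\dots,u_{j-1}$ inside $\mathcal{D}_{j}$. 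So the $u_j$ are distinct, and the sets $N(u_j)\cap V(\mathcal{D}_j)$ lie in pairwise different cliques.
\item[(d)] By Claim~\ref{gr}, there are no edges between distinct members of $\mathcal{D}$. Consequently the triangles of $u_j$ with two vertices in $V(\mathcal{D})$ number exactly $\sum_{D}\binom{|N(u_j)\cap D|}{2}$. The remaining triangles of $u_j$ must use vertices of $B_1$ or lie outside $V(\mathcal{D})$.
\end{enumerate}

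\emph{Step 2: the exchange.} The second step is an exchange argument modelled on Lemma~\ref{G}(4). Because the neighbourhoods of the $u_j$ in $V(\mathcal{D}_i)$ are spread over at least $b$ different disjoint cliques, I would consolidate them. Fix one clique $D^*\in\mathcal{D}_i$ met by $u_i$. Iteratively apply $\mathcal{G}(G,D^*,D)$ for the other cliques $D$ met by the window vertices, so that every $u_j$ receives an initial segment of $D^*$ of length at least its old intersection with $D^*\cup D$.

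By Lemma~\ref{G}, these operations do not increase the edge count and preserve the triangle condition. Moreover, they make neighbourhoods nested, so common neighbourhoods among the $u_j$ can only grow. After consolidation, the $b$ window vertices share an initial segment of $D^*$ of length about $\alpha f$. Each of them previously had at least $\alpha f$ edges into cliques that the consolidation does not need. The idea is to show that a positive number of these edges become redundant. If $b$ is large, the shared segment supplies each $u_j$ with at least $\binom{\alpha f}{2}$ triangles inside $D^*$, plus at least $\alpha f$ triangles through each other window vertex $u_{j'}$ adjacent to it. The target inequality is roughly $b\cdot\alpha f\gtrsim$ the number of triangles lost, and this becomes strict once $b>6\alpha f+7$. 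Then some edge can be deleted, contradicting extremality.

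The constants are to come from bookkeeping. The factor $6$ should come from comparing $\binom{f}{2}$ with $\binom{\alpha f}{2}$ plus linear terms, with $\alpha=4/7$. The half-integer threshold $\lceil 2\alpha f\rceil/2+1$ handles parity.

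\emph{Step 3: the improvement for $f\ge 8$.} For $f\ge 8$, I would redo the count in Step~2 using the finer fact that $\binom{x}{2}$ is convex. The loss from splitting $f$ neighbours over several cliques is then at least a constant fraction of $\binom{f}{2}$. This should improve $6\alpha f+7$ to $4\alpha f+5$.

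\emph{Main obstacle.} The hard step is Step~2: proving that after consolidation some edge is genuinely superfluous, that is, that every vertex still meets the $\binom{t}{2}$ triangle requirement after the deletion. This requires controlling the triangles of vertices in $B_i$ outside the window, whose neighbourhoods in $D^*$ are also shifted. I would first try to verify it with the nested-segment inequality $|N'(w)\cap N'(u)\cap A_1|=\min\{\cdot,\cdot\}$ from the proof of Lemma~\ref{G}. If that fails, I would fall back on the excess bound for cliques in $\mathcal{A}_1$ to limit how many outside vertices can be affected.
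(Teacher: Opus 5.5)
Your outline has a genuine gap at Step~2. It never exhibits an admissible graph with fewer edges, and the mechanism you suggest (consolidate, then delete a single edge) cannot use the hypothesis on $b$.

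The triangle condition is per vertex, and a large $b$ gives no slack to any individual vertex. Suppose the window vertices are pairwise non-adjacent, each meets a single clique of $\mathcal{D}$, has no other neighbours in $B_1$, and lies in exactly $\binom{t}{2}$ triangles. Then consolidation gives $u_j$ no new triangles, and deleting any of its edges into $D^*$ destroys $f_1(u_j)-1\ge 1$ of its triangles with nothing in return. Edges inside the cliques are pinned by their degree-$t$ vertices. Yet this is exactly the kind of configuration the lemma must exclude when $b$ is large.

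An aggregate count such as ``$b\cdot\alpha f\gtrsim$ triangles lost'' sums over many vertices and cannot license any particular deletion. For the same reason your explanations of the constants are off. The bounds $6\alpha f+7$ and $4\alpha f+5$ do not come from comparing $\binom{f}{2}$ with $\binom{\alpha f}{2}$ or from convexity, and the threshold $\lceil 2\alpha f\rceil/2+1$ is not about parity.

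The missing idea is a many-vertex exchange. Your remark that window vertices carry many edges into $V(\mathcal{D})$ points the right way, but those edges must be \emph{traded}, not deleted. The paper's construction is:
\begin{enumerate}
\item take an independent set $X$ of $x=\lceil 2\alpha f\rceil+2$ window vertices (one fewer when $f=2$);
\item make $X$ a clique and delete all edges from $X$ to $V(\mathcal{D})$;
\item isolate $D_2,\dots,D_d$;
\item re-route every other vertex of $B_1$ onto initial segments of $X\cup D_1$, with $X$ ordered first.
\end{enumerate}
The edge change is $\binom{x}{2}-\sum_{y\in X}f_1(y)<0$, precisely because the threshold in the definition of $b_i$ forces $f_1(y)>(x-1)/2$. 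Each $y\in X$ recovers its clique triangles via $\binom{x-1}{2}\ge\binom{f}{2}$. The re-routed vertices are handled by the common-neighbourhood bound of Claim~\ref{nx}.

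Consolidation and single-edge deletion in $G_d$ do appear in the paper, but only as tools to produce $X$. Vertices of different window classes have common neighbourhood in $V(\mathcal{D})$ at least $\lceil\alpha f\rceil$ below the smaller of their degrees into $V(\mathcal{D})$ (Claim~\ref{02}). Combined with single-edge deletion in $G_d$, this shows that every $u\in U_i$, for classes without a vertex isolated from the other classes, has at most $(f+a_i-2)/(\lceil\alpha f\rceil-1)$ neighbours in other window classes (Claim~\ref{m}). A greedy selection then builds $X$. Each pick removes at most as many vertices as three classes contain, or two when $f\ge 8$ since then $\lceil\alpha f\rceil\ge 5$. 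This is where $3(\lceil 2\alpha f\rceil+2)-2\le 6\alpha f+7$ and $2(\lceil 2\alpha f\rceil+2)-1\le 4\alpha f+5$ come from. Without such an exchange exploiting the $b$ distinct classes, your outline does not reach a contradiction.
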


% --- Lemma 2 ---
\begin{lemma} \label{d1}
For any $1\le i\le \xi $, if $f_i(u_i) = 1$, then $b_i \le \frac{3}{4}t$.
\end{lemma}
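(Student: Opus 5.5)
The plan is to turn the hypothesis $f_i(u_i)=1$ into a charging argument against Lemma~\ref{d}. The target is that each of the $b_i$ iterations $j=i,\dots,i+b_i-1$ forces, on a set of vertices private to that iteration, a total excess $\sum(\deg(v)-t)$ of at least $(\tfrac13-o(1))t$. Summing over iterations and comparing with the bound $\tfrac14(t+1)^2$ of Lemma~\ref{d} then gives $b_i\le \tfrac34 t$ for large $t$.

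\textbf{Step 1: structure of an iteration with $f=1$.} Since $\mathcal D_{j}\subseteq\mathcal D_i$ for $j\ge i$, the functions $f_j$ are pointwise dominated by $f_i$. So $f_i(u_i)=1$ means every vertex of $B_i$ has exactly one neighbour in $V(\mathcal D_i)$. In particular, each $u_j$ with $i\le j<i+b_i$ is adjacent to a single vertex $v_j$, lying in a single clique $D_j\in\mathcal D_j$, and $D_j$ is the only clique removed at step $j$. The cliques $D_j$ are therefore pairwise distinct and disjoint. Because they are members of $\mathcal D$, by Claim~\ref{gr} they have no edges between one another.

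\textbf{Step 2: every edge lies in a triangle.} By extremality of $G$, every edge lies in a triangle: otherwise deleting it preserves all triangle degrees and reduces $|E(G)|$. Apply this to $u_jv_j$. A common neighbour $x$ of $u_j$ and $v_j$ cannot lie in $D_j$, since then $u_j$ would have two neighbours in $V(\mathcal D_i)$. Hence $x\in B_i$, and so $v_j$ has at least $s_j\ge 2$ neighbours outside $D_j$. This already gives excess $\deg(v_j)-t\ge s_j$ on $D_j$.

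\textbf{Step 3: excess at $u_j$ and its outside neighbours.} Next, count triangles at $u_j$. At most $s_j-1$ of them use $v_j$. The remaining $\ge\binom t2-s_j+1$ triangles lie in $G[N(u_j)\setminus(V(\mathcal D_i))]$, and every vertex there again has at most one neighbour in $V(\mathcal D_i)$. The intended dichotomy is as follows.
\begin{itemize}
\item Either $s_j\ge t/3$, and the excess on $D_j$ alone suffices.
\item Or $u_j$ together with the common neighbours of $u_j$ and $v_j$ must carry excess $\gtrsim t/3$. These vertices each have degree at least $t$ inside a clique-like structure outside $V(\mathcal D_i)$, plus their extra edge into $V(\mathcal D_i)$, so they have degree $>t$ and lie in $V_2$.
\end{itemize}
I would try to make the second alternative precise by applying Theorem~\ref{lovas} to the link of $u_j$: it has at least $\binom t2-s_j$ edges, hence at least about $t-s_j/t$ vertices. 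Each such vertex $x$ with a neighbour in $V(\mathcal D_i)$ has $\deg(x)\ge t+1$.

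\textbf{Step 4: private charging and summation.} Different $j$ use different $v_j\in D_j$. Moreover, each vertex of $B_i$ is adjacent to only one vertex of $V(\mathcal D_i)$, so each outside vertex $x$ can be charged to at most one index $j$ via its unique $\mathcal D_i$-neighbour. The excesses are therefore counted disjointly, and summing yields $b_i\cdot(\tfrac t3-O(1))\le\tfrac14(t+1)^2$.

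The main obstacle is Step~3: extracting a linear (about $t/3$) amount of excess per iteration rather than the constant $2$ given by Step~2. The difficulty lies in the intermediate case where $s_j$ is moderate and $u_j$ gets its triangles from a nearly complete $t$-set disjoint from $V(\mathcal D_i)$. If the direct count falls short there, I would use the transformation $\mathcal G$ of Lemma~\ref{G}. Applied to $D_j$ together with the $K_{t+1}$ around $u_j$, it shifts the attachments into initial segments of $D_j$. Extremality (Lemma~\ref{G}(1),(4)) then forbids configurations where many $u_j$'s hang on a single vertex with small $s_j$, because such a shift would save an edge. This should rule out the bad case and supply the missing $t/3$.
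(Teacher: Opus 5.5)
\textbf{Gap in Step~3.} A per-iteration private excess of order $t/3$ cannot be extracted from the local information you use, so this step fails rather than being merely unfinished.

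Your first branch never occurs. Every $v\in D_j$ has $t$ neighbours inside $D_j$, so $s_j=\deg(v_j)-t\le\sum_{v\in D_j}(\deg(v)-t)<\theta\approx 0.094(t+1)$, because $D_j\in\mathcal A_1$.

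In the second branch, the excess charged privately to index $j$ lives on $D_j$ and on the set $W_j$ of outside neighbours of $D_j$. Here $|W_j|=\sum_{v\in D_j}(\deg(v)-t)<\theta$ as well. Theorem~\ref{lovas} on the link of $u_j$ gives only $\deg(u_j)\ge t+1$, i.e.\ excess $1$ per vertex of $W_j$. Link vertices with a $\mathcal D$-neighbour in another clique are, by your own Step~4, charged to other indices.

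A concrete configuration shows this is all the local data can give. Let $W_j=\{x,y\}$ with $xy\in E(G)$, both adjacent to $v_j$, and let $\{x,y\}\cup R$ (with $|R|=t-1$, $a,b\in R$) be $K_{t+1}$ minus the edge $ab$. Then $x$ and $y$ lie in exactly $\binom t2$ triangles, the edges $xv_j,yv_j$ are needed, and the excess charged to $j$ is only $4$. The real cost sits at $a$ and $b$, and one such defect can serve about $t/2$ indices. Indeed, every other vertex of $R$ also misses only the triangle through $ab$, so these vertices can be hung in pairs on further cliques.

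Your fallback also fails. The vertex $u_j$ lies in no $K_{t+1}$: if $u_j\in A\in\mathcal A$, either $v_j\notin A$ and $u_jv_j$ contradicts Claim~\ref{gr}, or $A\cap D_j=\{v_j\}$ and $v_j$ has $t$ outside neighbours, contradicting $D_j\in\mathcal A_1$. So $\mathcal G$ has no second clique to act on.

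\textbf{Missing idea.} What you need is an exchange between \emph{different} cliques, which is how the paper proceeds.
It applies $\mathcal G(\cdot,D_k,D_k)$ to every $D_k$; in the resulting graph $L$ all attachments of $D_k$ go to one vertex $z_k$. The attachment sets $W_k$ are then pairwise disjoint with $2\le|W_k|<\theta$; set $e_k=|E(L[W_k])|$.
Claim~\ref{w1} uses Lemma~\ref{d} to show that if $d>\frac34t$, there are $i\ne j$ and $w\in W_i$ with $|N_L(w)\cap W_j|\le 1$ and $e_i\le e_j$. Otherwise, ordering by $e_k$, each vertex of $W_i$ has at least two neighbours in every later $W_j$. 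The excess on the first $d-\lceil t/2\rceil+1$ sets is then at least $(d-\lceil t/2\rceil+1)(4d-2t)>\frac14(t+1)^2$, a contradiction.
Given such $i,j,w$, one deletes $wz_i$, detaches every $D_k$ with $k\ne i$, re-hangs $W_j$ on $w$, and re-hangs each remaining $W_k$ on $z_i$ or on a vertex of $W_i$. This saves an edge. The $e_j\ge e_i$ edges of $W_j$ replace the triangles $w$ lost through $z_i$, and $|W_i|<\theta<d$ leaves enough $W_k$'s to compensate the other vertices of $W_i$.

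The large excess in this argument consists of edges \emph{between} different $W_k$'s. It is forced only by the impossibility of this multi-clique exchange, which no analysis of a single $D_j$ can detect.
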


The following lemma establishes a structural constraint on the vertices in $B_1$, showing that a vertex with a large neighborhood in $V(\mathcal{D})$ can only be adjacent to a few cliques in the family.

\begin{lemma}\label{id}
 Let $\theta = \frac{\sqrt{33}}{2\sqrt{928}}(t+1)$. For any vertex $u \in B_1$, let $k(u)=|\{D\in \mathcal{D}:N_G(u) \cap D \neq \emptyset\}|$.
    \begin{enumerate}
        \item[(i)] If $|N_G(u) \cap V(\mathcal{D})| \ge \sqrt{2\theta} + 3$, then $k(u) \le 3$.
        \item[(ii)] If $|N_G(u) \cap V(\mathcal{D})| \ge \sqrt{3\theta} + 5$, then $k(u) \le 2$.
\end{enumerate}
\end{lemma}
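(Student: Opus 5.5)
Suppose $u\in B_1$ meets $k$ cliques $D_1,\dots,D_k\in\mathcal D$, with $x_j=|N(u)\cap D_j|\ge1$ and $s=\sum_j x_j=|N(u)\cap V(\mathcal D)|$. The plan is to show that if $s$ is large and $k\ge4$ (resp.\ $k\ge3$), then $G$ can be modified into an admissible graph with fewer edges, contradicting extremality. We exploit that every $D\in\mathcal D\subseteq\mathcal A_1$ has small excess $\sum_{v\in D}(d(v)-t)<\theta$. This bounds the number of edges leaving $D$: each vertex of $D$ has exactly $t$ neighbours inside $D$, so $e(D,V\setminus D)=\sum_{v\in D}(d(v)-t)<\theta$. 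In particular $\sum_j x_j\le e(u,\cdot)$ and, more usefully, $D_j$ sends fewer than $\theta$ edges outside itself.

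The key step is an edge-exchange. Delete all edges from $u$ to $D_2,\dots,D_k$ and all edges from $D_2,\dots,D_k$ to vertices outside them that are no longer needed, and instead join $u$ to more vertices of $D_1$ (the clique where $u$ has the most neighbours, so $x_1\ge s/k$). Following the spirit of Definition~\ref{11} and Lemma~\ref{G}, we consolidate $u$'s neighbourhood in $\mathcal D$ into one clique.

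Triangles at $u$ lost this way come from two sources. The first is pairs inside some $D_j$, giving $\sum_{j\ge2}\binom{x_j}{2}$. The second is pairs $\{w,v\}$ with $w\notin\bigcup D_j$ and $v\in D_j$; since there are no edges between distinct members of $\mathcal D$ (Claim~\ref{gr}), these are bounded using the $<\theta$ outside edges of each $D_j$. Moving all $s$ neighbours into $D_1$ gives $\binom{\min(s,t+1)}{2}$ triangles, so the gain over $\sum_j\binom{x_j}{2}$ is at least about $\sum_{i<j}x_ix_j$.

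Two accounting checks are then needed. First, the vertices of $D_j$ ($j\ge2$) that lose the edge to $u$ must still lie in their clique $D_j$, so they keep $\binom t2$ triangles. Second, vertices $w$ outside $\bigcup D_j$ that used $u$–$D_j$ triangles need an argument like Lemma~\ref{G}(4), placing $N(w)\cap D_j$ on initial segments of $D_1$ so that common neighbourhoods only grow.

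The edge count then drops provided the cross terms $\sum_{i<j}x_ix_j$, which count how much "spread" $u$ has, exceed the excess budget $\theta$ available to pay for repairs. With $k\ge4$ and $s\ge\sqrt{2\theta}+3$, after distributing the mass so that $\sum_{i<j}x_ix_j$ is minimised subject to every $x_j\ge1$, we would show that keeping one large $x_1$ and the rest equal to $1$ still forces roughly $x_1(k-1)+\binom{k-1}{2}$, which must exceed $\theta$. I would instead try to compare $\binom{s-(k-1)}{2}$ against $\theta$, which matches the $\sqrt{2\theta}$ threshold for (i); the $\sqrt{3\theta}$ in (ii) should similarly come from $2\binom{\cdot}{2}$-type counts for $k=3$. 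I expect this to be the main obstacle: pinning down exactly which quantity compares to $\theta$ to reproduce the constants $\sqrt{2\theta}+3$ and $\sqrt{3\theta}+5$, and verifying that the transformation preserves the triangle condition for third-party vertices. I would first attempt a direct counting argument: a vertex $v\in D_j$ adjacent to $u$ gains at least $x_j-1$ extra triangles, and that excess is charged against the clique's budget $<\theta$.
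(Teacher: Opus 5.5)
\textbf{There is a genuine gap.} Your proposal never names an edge whose removal gives an admissible graph with fewer edges, and the comparison with $\theta$ you sketch is the wrong one.

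First, consolidating $u$'s neighbourhood into $D_1$ (Definition~\ref{11}, iterated to get $G_d$) never saves an edge. Since $G$ is extremal, $|E(G_d)|=|E(G)|$. Moreover, in $G_d$ the cliques $D_2,\dots,D_d$ are already isolated, so there are no ``no longer needed'' edges left to delete. The surplus $\sum_{i<j}\mu_i\mu_j$ (with $\mu_i=|N_G(u)\cap D_i|$ and $\mu=\sum_i\mu_i$) is a surplus of \emph{triangles} at $u$, not of edges.

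The missing idea is to delete the single edge $uv_{f_1(u)}$ from the extremal graph $G_d$. Some vertex must then fall below $\binom{t}{2}$, and you must locate it:
\begin{itemize}
\item Vertices of $V(\mathcal D)$ stay inside their $K_{t+1}$, so they are safe.
\item A vertex $v\in B_1\setminus\{u\}$ loses at most one triangle, and only when $N_G(u)\cap V(\mathcal D)\subseteq N_G(v)\cap V(\mathcal D)$. In that case $v$ itself meets at least three cliques, and its own cross terms in $\binom{\sum_i|N_G(v)\cap D_i|}{2}$ pay for the loss.
\item Hence $u$ is the vertex that fails. It loses $\mu-1$ triangles inside $D_1$, plus one triangle for each $w$ in $R=\{w\in N_G(u)\cap B_1 : N_G(u)\cap V(\mathcal D)\subseteq N_G(w)\cap V(\mathcal D)\}$.
\end{itemize}
Failure of $u$ therefore forces $|R|>\sum_{i<j}\mu_i\mu_j-\mu+1$. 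Your accounting checks only cover what Lemma~\ref{G} already guarantees for the consolidation. They do not handle this extra deletion, which is where the real work is.

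Second, $\theta$ enters only at this point, and not as a ``repair budget''. Each $w\in R$ sends $\mu$ edges into $D_1\cup\dots\cup D_\eta$, where $\eta=k(u)$. The $\mathcal A_1$ condition bounds all edges leaving these cliques by $\eta\theta$, so $\mu|R|<\eta\theta$. Combine this with the minimisation you correctly identify, $\sum_{i<j}\mu_i\mu_j\ge(\eta-1)(\mu-\eta+1)+\binom{\eta-1}{2}$. This gives $\theta>\mu(\mu-2)/3$ when $\eta=3$, and $\theta>(\mu-1)(\mu-2)/2$ when $4\le\eta\le\mu$. These contradict $\mu\ge\sqrt{3\theta}+5$ and $\mu\ge\sqrt{2\theta}+3$ respectively.

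The factor $\mu$ multiplying $|R|$ is exactly what makes the thresholds of order $\sqrt{\theta}$. Your proposed comparison of $x_1(k-1)+\binom{k-1}{2}$ directly with $\theta$ is linear in $s$ for fixed $k$. For $k=4$ it would need roughly $s>\theta/3$, which $s\ge\sqrt{2\theta}+3$ does not give for large $\theta$. Your fallback $\binom{s-(k-1)}{2}$ has no derivation behind it. Charging excess to vertices inside $D_j$ is also beside the point, since those vertices lie in a $K_{t+1}$ and are never at risk. The budget must instead bound how many outside vertices are attached to all of $N_G(u)\cap V(\mathcal D)$.
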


%Now we  begin to prove Theorem \ref{3}.

{\noindent \bf Proof of Theorem \ref{3}:}
We now establish an upper bound on the number of $(t+1)$-cliques in $\mathcal{D}$ by analyzing the decay of the maximum neighborhood sizes. Let $\omega$ be the smallest index such that $f_\omega(u_\omega) < \sqrt{2\theta} + 3$, where $\theta$ is given in Lemma \ref{id}. If no such index exists, we let $\omega = \infty$. We define an index sequence $i_1, i_2, \dots$ by setting $i_1 = 1$ and, for $j \ge 1$,
\[
i_{j+1} =
\begin{cases}
\omega & \text{if } i_j < \omega \le i_j + b_{i_j}, \\
i_j + b_{i_j} & \text{otherwise}.
\end{cases}
\]
The sequence $\{i_1, i_2, \dots\}$ ends when $i_j>\xi$, where $\xi$ is the number of iterations. Let $j_0$ denote the length of the sequence $\{i_1, i_2, \dots\}$, that is, $\{i_1, i_2, \dots\}=\{i_j\}_{j=1}^{j_0}$.%Let $j_0$ denote the length of the sequence $\{i_j\}_{j=1}^{j_0}$.

If $\omega \neq \infty$, let $q$ be the unique index satisfying $i_q = \omega$;  if $\omega = \infty$, set $q=j_0$. For any $j \neq q$, the definition of $b_{i_j}$ ensures the recursive bound $f_{i_{j}}(u_{i_{j}}) < \alpha f_{i_{j-1}}(u_{i_{j-1}}) + 1$, where $\alpha = 4/7$. This recurrence implies a geometric decay towards the fixed point $1/(1-\alpha) = 7/3$. Specifically, for $j < q$, a standard induction yields
\begin{equation} \label{eq:decay-pre}
f_{i_j}(u_{i_j}) < \alpha^{j-k}\left(f_{i_k}(u_{i_k}) - \frac{7}{3}\right) + \frac{7}{3} \le \alpha^{j-1}\Lambda + 3,
\end{equation}
where the final inequality follows by taking $k = 1$ and noting that $f_{i_1}(u_{i_1}) \leq \Lambda$.
Analogously, for the phase starting at $\omega$, for any $j \ge q$, the neighborhood sizes satisfy
\begin{equation} \label{eq:decay-post}
f_{i_j}(u_{i_j}) < \alpha^{j-q}\left(\sqrt{2\theta} + 3\right) + 3.
\end{equation}

 For each $1\le i \le\xi$, let $k'(u_i)$ denote the number of cliques in the current family $\mathcal{D}_i$ that have a nonempty intersection with $N_G(u_i)$, and recall that $k(u)$ is defined as the number of cliques $D\in \mathcal{D}$ such that $N_G(u) \cap D \neq \emptyset$. Note that $k'(u_i)\leq k(u_i)$ for any $1\le i\le \xi$.

 To estimate $|\mathcal{D}|$, we partition the index sequence $\{i_j\}_{j=1}^{j_0} $ into five intervals, determined by the thresholds established in Lemmas \ref{b}, \ref{d1}, and \ref{id}.
\begin{enumerate}
    \item Let $p$ be the maximal index such that $f_{i_p}(u_{i_p}) \ge \alpha^{-1}(\sqrt{3\theta} + 5)$; if no such index exists, set $p=0$. For all $1 \le j \leq p$ and $i_j\leq i < i_{j+1}$, the definition of $b_{i_j}$ ensures that $f_{i}(u_{i}) \ge \sqrt{3\theta} + 5$. Thus, Lemma \ref{id}(ii) implies $k'(u_{i}) \le 2$,  while Lemma \ref{b} yields $b_{i_j} \le 4\alpha f_{i_j}(u_{i_j}) + 5$.

    \item For $p+1 \le j \leq q-1$ and $i_j\leq i < i_{j+1}$, where $i_q = \omega$ is the previously defined threshold, Lemma \ref{id}(i) yields $k'(u_{i}) \le 3$, while the bound $b_{i_j} \le 4\alpha f_{i_j}(u_{i_j}) + 5$ remains valid.

    \item Let $r$ be the minimal index such that $f_{i_r}(u_{i_r}) < 8$; if no such index exists, set $r=j_0$. For $q \le j \le r-1$ and $i_j\leq i < i_{j+1}$, we use the trivial bound $k'(u_{i}) \le f_{i}(u_{i}) \le f_{i_j}(u_{i_j})$, while $b_{i_j} \le 4\alpha f_{i_j}(u_{i_j}) + 5$ still holds.

    \item Let $s$ be the minimal index such that $f_{i_s}(u_{i_s}) = 1$ and it follows from the definition of $b_i$ and $j_0$ that $s=j_0-1$ if such index exists; if no such index exists, set $s=j_0$. For $r \le j \le s-1$ and $i_j\leq i < i_{j+1}$, we again have $k'(u_{i}) \le f_{i}(u_{i}) \le f_{i_j}(u_{i_j})$, but the step size bound increases to $b_{i_j} \le 6\alpha f_{i_j}(u_{i_j}) + 7$ by Lemma \ref{b}.

    \item For $j=s$ and $i_j \le i\le \xi$, since $f_{i}(u_{i}) = 1$, each vertex $u_{i}$ is contained in exactly one clique, and $b_{i_j} \le \frac{3}{4}t$ by Lemma \ref{d1}.
\end{enumerate}

Combining the bounds from all five cases above,  we obtain an upper bound on the total number of $(t+1)$-cliques in $\mathcal{D}$:
\begin{equation*}
    \sum_{i=1}^\xi k'(u_i) \le \sum_{i=1}^{i_{p+1}-1} 2 + \sum_{i=i_{p+1}}^{i_{q}-1} 3 + \sum_{i=i_{q}}^{i_r-1} f_{i}(u_{i}) + \sum_{i=i_r}^{i_s-1} f_{i}(u_{i}) + \frac{3}{4}t.
\end{equation*}
Recalling that each jump $j$ in our index sequence $i_j$ covers at most $b_{i_j}$ individual iterations, we substitute the bounds on $b_{i_j}$. Then we have
\begin{align*}
\sum_{i=1}^\xi k'(u_i) &\le 2 \sum_{j=1}^{p} (4\alpha f_{i_j}(u_{i_j}) + 5) + 3 \sum_{j=p+1}^{q-1} (4\alpha f_{i_j}(u_{i_j}) + 5) \\
    &\quad + \sum_{j=q}^{r-1} f_{i_j}(u_{i_j})(4\alpha f_{i_j}(u_{i_j}) + 5) + \sum_{j=r}^{s-1} f_{i_j}(u_{i_j})(6\alpha f_{i_j}(u_{i_j}) + 7) + \frac{3}{4}t.
\end{align*}

Applying the upper bounds $f_{i_j}(u_{i_j}) \le \alpha^{j-1}\Lambda + 3$ for $1 \le j \le p$, $f_{i_j}(u_{i_j}) \le \alpha^{j-p-2}(\sqrt{3\theta} + 5) + 3$ for $p+1 \le j \le q$, and $f_{i_j}(u_{i_j}) \le \alpha^{j-q}(\sqrt{2\theta} + 3) + 3$ for $q \le j \le r-1$, along with the specific terminal values $f_{i_r}(u_{i_r}) \le 7, f_{i_{r+1}}(u_{i_{r+1}}) \le 4, f_{i_{r+2}}(u_{i_{r+2}}) \le 3$, and $f_{i_{r+3}}(u_{i_{r+3}}) \le 2$ (which follow from the recurrence $f_{i_j}(u_{i_j}) < \alpha f_{i_{j-1}}(u_{i_{j-1}}) + 1$ with $\alpha = 4/7$), we obtain
\begin{align*}
\sum_{i \ge 1} k'(u_i) &\le 2 \left( \sum_{j=1}^{p} 4\alpha^j \Lambda + \Theta(p) \right) + 3 \left( \sum_{j=p+1}^{q-1} 4\alpha^{j-p-1} \sqrt{3\theta} + \Theta(q-p) \right) \\
    &\quad + \sum_{j=q}^{r-1} \left( 4\alpha \cdot (\alpha^{j-q}\sqrt{2\theta})^2 + \Theta(\sqrt{\theta}) \right) + \sum_{k=0}^3 (6\alpha f_{i_{r+k}} + 7)f_{i_{r+k}} + \frac{3}{4}t \\
    &= 8\Lambda \left( \frac{\alpha}{1-\alpha} \right) + 8\theta \left( \frac{\alpha}{1-\alpha^2} \right) + \frac{3}{4}t + \Theta(\sqrt{\theta}).
\end{align*}
Evaluating the coefficients with $\alpha = 4/7$, a straightforward calculation yields the final upper bound:
\begin{equation} \label{eq:final_clique_bound}
\sum_{i \ge 1} k'(u_i) \le \frac{32}{3}\Lambda + \frac{224}{33}\theta + \frac{3}{4}t + \Theta(\sqrt{\theta}).
\end{equation}

Lemma \ref{id} implies that $\Lambda \le 2\theta$. Indeed, if there is a vertex $u \in B_1$ with $f_1(u)=\Lambda > 2\theta$, then for sufficiently large $t$, the condition $\Lambda > \sqrt{3\theta} + 5$ would hold, and Lemma \ref{id}(ii) would yield $k(u) \le 2$. By the pigeonhole principle, $u$ would have more than $\Lambda/2 > \theta$ neighbors in some clique $D \in \mathcal{D}$, which contradicts the definition of $\mathcal{A}_1$.

Substituting $\Lambda \le 2\theta$ into our previous estimate, we have that
\begin{align*}
|\mathcal{D}| = \sum_{i \ge 1} k'(u_i)  &\le \left( \frac{64}{3} + \frac{224}{33} \right)\theta + \frac{3}{4}t + \Theta(\sqrt{\theta}) \\
&= \frac{928}{33}\theta + \frac{3}{4}t + \Theta(\sqrt{\theta}).
\end{align*}
On the other hand, Claim \ref{claim30} establishes the lower bound:
\begin{align*}
|\mathcal{D}| &\ge \frac{\sqrt{928}}{2\sqrt{33}}(t+1) + \frac{3}{4}t + o(t) \\
&= \frac{928}{33}\theta + \frac{3}{4}t + o(t).
\end{align*}

For sufficiently large $t$, we may choose the $o(t^2)$ term in Theorem \ref{3} to be $t^{11/6}$. It follows from Claim \ref{claim30} that $\mathcal{A}_1$ contains at least
\[
\left( \frac{\sqrt{928}}{2\sqrt{33}} + \frac{3}{4} \right)(t+1) + \Theta(t^{5/6})
\]
pairwise disjoint cliques. Since $\Theta(t^{5/6})$ dominates $\Theta(\sqrt{\theta})$, the upper bound of $|\mathcal{D}|$ is strictly less than this lower bound for large $t$. This contradiction completes the proof of Theorem \ref{3}.
\qed

\section{Proofs of Technical Lemmas}

In this section, we prove Lemmas \ref{b}, \ref{d1}, and \ref{id}. The proof of Lemma \ref{b}  is the most intricate. Throughout, we let  $\mathcal{D} = \{D_1, \ldots, D_d\}$  and  $V(\mathcal{D}) = \bigcup_{j=1}^d D_j$.
The cliques $D_1, D_2, \dots,D_d$ are pairwise vertex-disjoint; furthermore, Claim \ref{gr}  ensures that no two distinct cliques are joined by an edge. Let $D_1 = \{v_1, \ldots, v_{t+1}\}$.
Define $G_0 = G$ and iteratively construct  $G_j = \mathcal{G}(G_{j-1}, D_1, D_j)$ for $1 \leq j \leq d$. By Lemma \ref{G}, every vertex in $G_d$  is contained  in at least $\binom{t}{2}$ triangles.
Since $|E(G_d)|\le|E(G)|$, we must have  $|E(G_d)| = |E(G)|$.   We claim that $\Lambda=f_1(u_1) \leq t$. Suppose $\Lambda \geq t+1$. Then $u_1v_{t+1}\in E(G_d)$ which implies $\{u_1\} \cup D_1$ forms a $K_{t+2}$. Deleting $v_tv_{t+1}$ from $G_d$ preserves the triangle-degree condition of Problem \ref{13} with $k=3$ while reducing edges, a contradiction.

\subsection{Lemma \ref{b}}

\begingroup
\it
For any $1\le i\le \xi $, if $f_i(u_i) \ge 2$, we have $b_i \le 6\alpha f_i(u_i) + 7$. Moreover, if $f_i(u_i) \ge 8$, then $b_i \le 4\alpha f_i(u_i) + 5$.
\endgroup

\medskip
By the translation invariance of the iterative procedure, it suffices to establish the result for $i = 1$. Let $b = b_1$, $U = \bigcup_{i=1}^b U_i$ and recall that $\Lambda = f_1(u_1)$. To clarify the subsequent technical details, we first outline the main ideas of the proof.

\medskip
\noindent \textbf { Overview of the proof.}
Assume, for the sake of contradiction, that $b > 6\alpha\Lambda + 7$ (respectively, $b > 4\alpha\Lambda + 5$ if $\Lambda \ge 8$). To contradict the extremality of $G$, we construct a graph $J$ on the same vertex set $V(G)$ such that $|E(J)| < |E(G)|$, while ensuring that every vertex in $J$ is still contained in at least $\binom{t}{2}$ triangles.

The construction hinges on identifying a large independent set $X_0 \subseteq U$. We transform $X_0$ into a clique in $J$ and redistribute the adjacencies between $B_1\setminus X_0$ and $V(\mathcal{D}) \cup X_0$ to maintain the minimum triangle-degree condition. Crucially, the assumption on  $b > 6\alpha\Lambda + 7$ guarantees the existence of such an independent set $X_0$ via a greedy procedure. The reduction in the number of edges is achieved because the edges removed between $X_0$ and $V(\mathcal{D})$ more than offset the new edges created within $X_0$, thus yielding the desired contradiction.

\begin{proof}
For any $1 \leq i \leq b$, we have

\begin{equation}\label{equaas}
\Lambda = f_1(u_1) \geq f_1(u_i) \geq f_i(u_i) \geq f_i(u_b) \geq f_b(u_b)  \geq \lceil \alpha \Lambda \rceil,
\end{equation}
the final inequality follows from (\ref{def:bi}).

Let $Z =\{ u \in U : N(u) \cap U \subseteq U_i \text{ for the index } i \text{ such that } u \in U_i,1\le i\le b \}. $ We partition the family $\{U_1, \ldots, U_b\}$ into two sub-families: $\mathcal{U}_1 = \{U_i : U_i \cap Z = \emptyset\}$ and $\mathcal{U}_2 = \{U_i : U_i \cap Z \neq \emptyset\}$. Assume $|\mathcal{U}_1| = b'$ and then $|\mathcal{U}_2| = b-b'$.

A central component in the proof of Lemma \ref{b} is Claim \ref{m}, which provides an upper bound on the cross-adjacencies for vertices in the sub-collection $\mathcal{U}_1$. While by the definitions of $Z$ and $\mathcal{U}_2$, each set $U_i$ in $\mathcal{U}_2$ contains at least one vertex that is isolated from $U \setminus U_i$, the sets in $\mathcal{U}_1$ lack such representatives. Claim \ref{m} addresses this by showing that any $u \in U_i \in \mathcal{U}_1$ still has a very limited neighborhood in $U \setminus U_i$; this property is essential for constructing an independent set of a specific size in $U$, the existence of which is fundamental to the proof of Lemma \ref{b}. We establish Claim \ref{m} via the two auxiliary claims below. For the sake of flow, the technical details of their proofs may be skipped initially.

\begin{claim}\label{01}
    For any $1\le i\le \xi$, $u\in U_i$ and $u'\in B_1\backslash U_i$, we have $$\min\left\{-1+\sum\limits_{j=0}^d|N_{G}(u)\cap D_j|,\sum\limits_{j=0}^d|N_{G}(u')\cap D_j|\right\}\geq \sum\limits_{j=0}^d|N_{G}(u)\cap N_G(u')\cap D_j|.$$
\end{claim}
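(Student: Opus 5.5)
The second term of the minimum is trivial, so the plan is to prove the first inequality by contradiction, using only the maximality of $f_i(u_i)$ in Step (2) and the definition of $U_i$ in Step (3). (I read the sums as running over the cliques $D_1,\dots,D_d$ of $\mathcal{D}$, so that $\sum_j |N_G(w)\cap D_j| = |N_G(w)\cap V(\mathcal{D})|$.)

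\emph{The trivial half.} We have $|N_G(u)\cap N_G(u')\cap D_j| \le |N_G(u')\cap D_j|$ for every $j$. Summing over $j$ gives the bound by the second term of the minimum.

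\emph{The first half.} Since the sums are sums of integers, the inequality against the first term is equivalent to the statement that $N_G(u)\cap V(\mathcal{D})$ is \emph{not} contained in $N_G(u')$. Suppose for contradiction that $N_G(u)\cap V(\mathcal{D}) \subseteq N_G(u')$, and restrict this containment to $V(\mathcal{D}_i)\subseteq V(\mathcal{D})$.
\begin{itemize}
\item Because $u\in U_i$, we have $N_G(u)\cap V(\mathcal{D}_i) = N_G(u_i)\cap V(\mathcal{D}_i)$, and this set is nonempty since $u_i\in B_i$.
\item Hence $u'$ has a neighbour in $V(\mathcal{D}_i)$. The vertex $u'$ is not in $V(\mathcal{D}_i)$, because it lies in $B_1$, which is disjoint from $V(\mathcal{D})$. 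Therefore $u'\in B_i$.
\item We also get $N_G(u_i)\cap V(\mathcal{D}_i) \subseteq N_G(u')\cap V(\mathcal{D}_i)$.
\end{itemize}
Now split into two cases.
\begin{itemize}
\item If the containment is an equality, then by definition $u'\in U_i$, contradicting $u'\in B_1\setminus U_i$.
\item If the containment is strict, then $f_i(u') > f_i(u_i)$ with $u'\in B_i$, contradicting the choice of $u_i$ as a maximizer of $f_i$ over $B_i$ in Step (2).
\end{itemize}
Either way we reach a contradiction. So $u$ has at least one neighbour in $V(\mathcal{D})$ that is not a neighbour of $u'$, which is exactly the first inequality.

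The only delicate point is checking that $u'$ really lies in $B_i$, so that the maximality in Step (2) applies to it. This is where we need that $N_G(u_i)$ meets $V(\mathcal{D}_i)$, which holds because $u_i\in B_i$. No extremality of $G$ or use of the transformation $\mathcal{G}$ is needed for this claim.
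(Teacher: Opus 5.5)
Your proof is correct and follows essentially the paper's route. The paper forces the chain $\min\{a-1,b\}+1 \ge \min\{a,b\} \ge \sum_j \min\{|N_G(u)\cap D_j|,|N_G(u')\cap D_j|\} \ge \sum_j |N_G(u)\cap N_G(u')\cap D_j|$ into equalities to get $N_G(u)\cap V(\mathcal{D}) \subseteq N_G(u')$, which is exactly your reduction to non-containment, and then uses the maximality of $f_i(u_i)$ to conclude $u' \in U_i$; your explicit check that $u' \in B_i$ fills in a point the paper leaves implicit when it writes $f_i(u) \ge f_i(u')$.
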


\begin{proof}
Suppose there is $1\le i\le \xi$, $u\in U_i$ and $u'\in B_1\backslash U_i$ such that
\[
\min\left\{ -1 + \sum_{j=0}^d |N_G(u) \cap D_j|, \sum_{j=0}^d |N_G(u') \cap D_j| \right\} + 1 \le \sum_{j=0}^d |N_G(u) \cap N_G(u') \cap D_j|.
\]
On the other hand, we have the following general chain of inequalities:
\begin{align}
\min\left\{ -1 + \sum_{j=0}^d |N_G(u) \cap D_j|, \sum_{j=0}^d |N_G(u') \cap D_j| \right\} + 1
&\ge \min\left\{ \sum_{j=0}^d |N_G(u) \cap D_j|, \sum_{j=0}^d |N_G(u') \cap D_j| \right\} \label{0000}\\
&\ge \sum_{j=0}^d \min\left\{ |N_G(u) \cap D_j|, |N_G(u') \cap D_j| \right\} \label{0001}\\
&\ge \sum_{j=0}^d |N_G(u) \cap N_G(u') \cap D_j|. \label{0002}
\end{align}
Comparing these bounds, it follows that all inequalities in the chain must hold as equalities. The equality in (\ref{0002}) implies that for each $0 \le j \le d$, we have either $N_G(u) \cap D_j \subseteq N_G(u') \cap D_j$ or $N_G(u) \cap D_j \supseteq N_G(u') \cap D_j$. Furthermore, for the equality in (\ref{0001}) to hold, we have that for each $0 \le j \le d$, either $N_G(u) \cap D_j \subseteq N_G(u') \cap D_j$ or $N_G(u) \cap D_j \supseteq N_G(u') \cap D_j$.

Combined with the equality in (\ref{0000}), we conclude that $N_G(u) \cap D_j \subseteq N_G(u') \cap D_j$ for all $0 \le j \le d$.
Since  $f_i(u) \ge f_i(u')$, this containment forces $N_G(u) \cap D_j = N_G(u') \cap D_j$ for all $D_j \in \mathcal{D}_i$. Consequently, $u' \in U_i$, yielding a contradiction.
\end{proof}

\begin{claim}\label{02}
Let $1 \le i \neq i' \le b$. For any vertices $u \in U_i$ and $u' \in U_{i'}$, we have
\[
\min\left\{ \sum_{j=0}^d |N_G(u) \cap D_j|, \sum_{j=0}^d |N_G(u') \cap D_j| \right\} \ge \sum_{j=0}^d |N_G(u) \cap N_G(u') \cap D_j| + \lceil \alpha \Lambda \rceil.
\]
\end{claim}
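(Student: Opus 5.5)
Without loss of generality take $i<i'$, so that $U_{i'}\subseteq B_{i'}$ and the sets $N(u_i)$, $N(u_{i'})$ sit inside the nested families $\mathcal{D}_i\supseteq\mathcal{D}_{i+1}\supseteq\dots\supseteq\mathcal{D}_{i'}$. The plan is to split each side of the inequality according to whether a clique $D_j$ lies in $\mathcal{D}_{i'}$ or in $\mathcal{D}_i\setminus\mathcal{D}_{i'}$ (cliques outside $\mathcal{D}_i$ are handled by the same bookkeeping as in Claim~\ref{01}). The key structural fact is Step (4) of the construction: $\mathcal{D}_{i+1}$ consists exactly of the cliques of $\mathcal{D}_i$ avoiding $N(u_i)$. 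Since $u\in U_i$ has $N(u)\cap V(\mathcal{D}_i)=N(u_i)\cap V(\mathcal{D}_i)$, every clique of $\mathcal{D}_{i'}\subseteq\mathcal{D}_{i+1}$ is disjoint from $N(u)$. Hence
\[
\sum_j |N(u)\cap N(u')\cap D_j| \le \sum_{D_j\notin \mathcal{D}_{i'}} |N(u')\cap D_j|,
\]
so all common neighbours of $u$ and $u'$ lie outside $V(\mathcal{D}_{i'})$.

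Next I bound each term of the minimum. For $u'$: $\sum_j|N(u')\cap D_j| = \sum_{D_j\notin\mathcal{D}_{i'}}|N(u')\cap D_j| + f_{i'}(u')$. Since $u'\in U_{i'}$, we have $f_{i'}(u')=f_{i'}(u_{i'})$, and by \eqref{equaas} (valid because $i'\le b$) this is at least $\lceil\alpha\Lambda\rceil$. Thus the $u'$-term exceeds the common-neighbour count by at least $\lceil\alpha\Lambda\rceil$.

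For $u$, the symmetric argument uses $u'$'s private neighbours. Every neighbour of $u$ in $V(\mathcal{D}_i)$ lies in cliques of $\mathcal{D}_i\setminus\mathcal{D}_{i+1}$, so I need $|N(u)\cap V(\mathcal{D})|-|N(u)\cap N(u')\cap V(\mathcal{D})|\ge\lceil\alpha\Lambda\rceil$. Here I would use that $u\in U_i$ has $f_i(u)=f_i(u_i)\ge f_{i}(u_{i'})$ is maximal, together with the chain $f_i(u_i)\ge f_i(u')\ge f_{i'}(u')\ge\lceil\alpha\Lambda\rceil$. The main obstacle is that $u$ might share almost all of its $\mathcal{D}_i$-neighbours with $u'$, since $u'$ could also hit those cliques.

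To handle this I would instead exploit the equalization by $\mathcal{G}$: work in $G_d$ (or argue that by edge-minimality the configuration in $G$ is already "shifted", as in Claim~\ref{01}), where neighbourhoods inside $V(\mathcal{D})$ are nested initial segments. If $N(u)\cap V(\mathcal{D})$ were nearly contained in $N(u')$, then $f_i(u')\ge f_i(u)$ with $u'$ additionally hitting $\mathcal{D}_{i'}$, in which $u$ has no neighbours. Using the maximality of $f_i(u_i)$, this forces $f_i(u') \ge f_i(u) - (\text{private part of } u) + f_{i'}(u')$. That gives private part $\ge f_{i'}(u')\ge\lceil\alpha\Lambda\rceil$, because otherwise $f_i(u')>f_i(u_i)$, contradicting the choice of $u_i$. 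This is the step I expect to need care: the bound $f_i(u')\le f_i(u_i)=f_i(u)$ combined with $f_i(u')\ge |N(u)\cap N(u')\cap V(\mathcal{D}_i)|+f_{i'}(u')$ yields exactly the required $\lceil\alpha\Lambda\rceil$ gap on the $u$ side. Taking the minimum of the two bounds finishes the claim.
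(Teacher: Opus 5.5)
Your argument is correct and is essentially the paper's proof with the roles of $u$ and $u'$ swapped; the paper takes $i>i'$. In both, the later vertex's $f_{i'}(u')\ge\lceil\alpha\Lambda\rceil$ neighbours in $V(\mathcal{D}_{i'})$ are private because the earlier vertex misses $V(\mathcal{D}_{i'})$, and the maximality $f_i(u')\le f_i(u_i)=f_i(u)$ then transfers this gap to the earlier vertex. The detour through $G_d$ is unnecessary and would be misleading, since in $G_d$ the neighbourhoods are nested initial segments of $D_1$ and the inequality fails there. Your final step works directly in $G$; it needs only $u'\in B_i$, which holds because $u'$ has a neighbour in $V(\mathcal{D}_{i'})\subseteq V(\mathcal{D}_i)$ and, by Claim~\ref{gr}, cannot lie in $V(\mathcal{D})$.
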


\begin{proof}
Without loss of generality, assume $i > i'$. For any $x \in \{u, u'\}$, let $N^*(x) = \bigcup_{j=0}^d (N_G(x) \cap D_j)$. Since the cliques $\{D_j\}_{j=0}^d$ are pairwise disjoint, we have  $|N^*(x)| = \sum_{j=0}^d |N_G(x) \cap D_j|$.

First, by restricting the neighborhood of $u$ to the cliques in $\mathcal{D}_i$, we observe that
\begin{equation*} \label{eq:bound_u}
|N^*(u) \setminus N^*(u')| \ge \left| \bigcup_{D \in \mathcal{D}_i} (N_G(u) \cap D) \setminus N^*(u') \right|.
\end{equation*}
By construction, the neighborhood $N^*(u')$ is disjoint from the cliques in $\mathcal{D}_i$. Combining   this with (\ref{equaas}), we obtain:
\begin{equation} \label{eq:bound_u1}
\left| \bigcup_{D \in \mathcal{D}_i} (N_G(u) \cap D) \setminus N^*(u') \right| = \sum_{D \in \mathcal{D}_i} |N_G(u) \cap D| \ge \lceil \alpha\Lambda \rceil.
\end{equation}
It follows that
\begin{equation} \label{eq:bound_u11}
|N^*(u) \setminus N^*(u')| \ge \lceil \alpha\Lambda \rceil.
\end{equation}

Next, to bound $|N^*(u') \setminus N^*(u)|$, we restrict the neighborhood difference to the cliques in $\mathcal{D}_{i'}$. We have
\begin{equation*}
|N^*(u') \setminus N^*(u)| \ge \left| \bigcup_{D \in \mathcal{D}_{i'}} (N_G(u') \cap D) \right| - \left| \bigcup_{D \in \mathcal{D}_{i'}} (N_G(u') \cap N_G(u) \cap D) \right|.
\end{equation*}
By the degree property of our partition,  $| \bigcup_{D \in \mathcal{D}_{i'}} (N_G(u') \cap D) | \ge | \bigcup_{D \in \mathcal{D}_{i'}} (N_G(u) \cap D) |$. Consequently,
\begin{equation*}
|N^*(u') \setminus N^*(u)| \ge \left| \bigcup_{D \in \mathcal{D}_{i'}} (N_G(u) \cap D) \right| - \left| \bigcup_{D \in \mathcal{D}_{i'}} (N_G(u) \cap N_G(u') \cap D) \right| \ge | \bigcup_{D \in \mathcal{D}_{i'}} (N_G(u) \cap D) \setminus N^*(u') |.
\end{equation*}
Since $\mathcal{D}_{i'} \supseteq \mathcal{D}_i$, this last expression is bounded below by the cardinality of the union restricted to $\mathcal{D}_i$. By  \eqref{eq:bound_u1}, we conclude that
\begin{equation} \label{eq:bound_u12}
|N^*(u') \setminus N^*(u)| \ge \lceil \alpha\Lambda \rceil.
\end{equation}

Finally, applying the identity $|A \cap B| = |A| - |A \setminus B|$ to both $u$ and $u'$, and combining (\ref{eq:bound_u11}) and (\ref{eq:bound_u12}) we obtain
\[
|N^*(u) \cap N^*(u')| \le \min\{ |N^*(u)|, |N^*(u')| \} - \lceil \alpha\Lambda \rceil.
\]
Substituting the summation forms for the neighborhood sizes completes the proof.
\end{proof}

\begin{claim}\label{m} Suppose $\Lambda \ge 2$. For any $U_i \in \mathcal{U}_1$, every vertex $u_i^j \in U_i$ ($1\le j\le a_i$) satisfies
\[
|N_G(u_i^j) \cap (U \setminus U_i)| \le \frac{\Lambda + a_i - 2}{\lceil \alpha \Lambda \rceil - 1}.
\]
\end{claim}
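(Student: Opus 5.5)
The plan is a local exchange argument contradicting the extremality of $G$, in the spirit of Claim~\ref{gr} and Lemma~\ref{G}. Fix $U_i\in\mathcal U_1$ and $u=u_i^j\in U_i$. Let $W=N_G(u)\cap(U\setminus U_i)$ and $m=|W|$, and suppose that $m(\lceil\alpha\Lambda\rceil-1)>\Lambda+a_i-2$. I would build a graph $J$ on $V(G)$ with $|E(J)|<|E(G)|$ in which every vertex still lies in at least $\binom t2$ triangles. This contradicts extremality.

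\emph{Construction of $J$.} First pass from $G$ to $G_d$, the iterated transform from the start of this section. By Lemma~\ref{G}, every neighbourhood of a vertex of $B_1$ inside $V(\mathcal D)$ then becomes an initial segment of $D_1$ of size $\min\{f_1(\cdot),t+1\}$, without increasing the number of edges. Then delete the $m$ edges $uw$ with $w\in W$, and make up the triangle loss by extending the initial segment of $u$ in $D_1$ by $s$ further vertices $v_{f+1},\dots,v_{f+s}$, where $f=|N(u)\cap V(\mathcal D)|\le\Lambda$ and $s<m$. Since $D_1$ is a clique, each added edge $uv_{f+\ell}$ creates at least $f+\ell-1$ new triangles at $u$. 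If $s\ge1$, then $u$ together with its extended segment gains about $sf$ triangles.

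\emph{Triangle bookkeeping.} The triangles through $u$ destroyed by deleting $uw$ are of two kinds. The first kind has third vertex in $V(\mathcal D)$. By Claim~\ref{02}, their number is at most $|N(u)\cap N(w)\cap V(\mathcal D)|\le f-\lceil\alpha\Lambda\rceil$. The second kind has third vertex in $B_1$. I plan to control these using that $u$'s neighbours in $U_i$ number at most $a_i-1$, and that Claim~\ref{01} limits the overlap with any vertex outside $U_i$ by $f-1$. Summing over $w\in W$ should give a total loss of at most $m(f-\lceil\alpha\Lambda\rceil+1)+(\text{terms of order }\Lambda+a_i)$. The gain from $s$ added edges is at least $s f$. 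The inequality $m(\lceil\alpha\Lambda\rceil-1)>\Lambda+a_i-2$ is exactly what should allow $s\le m-1$ added edges to cover the loss. The $-1$ in the denominator absorbs the triangle $uww'$ for $w,w'\in W$, and the $\Lambda+a_i-2$ absorbs the triangles through $u$'s neighbours in $U_i\setminus\{u\}$ and the slack $f\le\Lambda$.

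\emph{Checking the other vertices.} I must also check that each $w\in W$ keeps $\binom t2$ triangles. For this I would use $U_i\in\mathcal U_1$: every vertex of $U_i$ has a neighbour outside $U_i$ in $U$. Alternatively, I would apply the same initial-segment shift to $w$, which gives it at least as many $D_1$-triangles as it had through $u$, via the min-overlap inequality used in the proof of Lemma~\ref{G}(4). The vertices of $D_1$ only gain edges. The main obstacle is precisely this bookkeeping for the triangles of the form $u w x$ with $x\in B_1$, both on $u$'s side and on $w$'s side. Nestedness in $G_d$ can inflate common neighbourhoods, so I would take the overlap bounds from Claims~\ref{01} and~\ref{02} in the original $G$ and perform only the minimal shift on $u$'s edges. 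I would try this first, and if it fails, fall back on a weighted count over all of $U_i$ simultaneously.
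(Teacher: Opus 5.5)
There is a genuine gap, and it is the step you yourself flag as the ``main obstacle'': the cost of deleting the $m$ edges $uw$ ($w\in W$) cannot be covered by new edges at $u$.

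Every triangle $uwx$ with $x\notin V(\mathcal{D})$ is present in both $G$ and $G_d$, because the transform never touches edges outside $V(\mathcal{D})$. In $J$ such a triangle disappears at $u$, at $w$ and at $x$. Claims \ref{01} and \ref{02} only control common neighbourhoods \emph{inside} $V(\mathcal{D})$. So $|N_G(u)\cap N_G(w)\setminus V(\mathcal{D})|$ is not bounded in terms of $\Lambda$ and $a_i$, and $x$ need not lie in $B_1$ at all. Your $s<m$ new edges $uv_{f+\ell}$ create triangles only through $u$, so they cannot repair $w$ or $x$.

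Concretely, nothing prevents some $w\in W$ from having exactly $\binom{t}{2}$ triangles in $G$, with all of its $G_d$-surplus coming from the pair $\{u,w\}$. Since the edge $uw$ of the extremal graph $G$ lies in at least one triangle, this $w$ falls below $\binom{t}{2}$ in $J$. The property $U_i\in\mathcal{U}_1$ that you want to use here is a statement about vertices of $U_i$, not of $W\subseteq U\setminus U_i$.

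Your loss count at $u$ also mixes the two graphs. The bound $|N(u)\cap N(w)\cap V(\mathcal{D})|\le f-\lceil\alpha\Lambda\rceil$ holds in $G$. In $G_d$, where the initial segments live, $u$ and $w$ share $\min\{f_1(u),f_1(w)\}$ vertices of $D_1$. Finally, the decisive inequality is only asserted (``should allow''), not derived.

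The missing idea is that the inflation of common neighbourhoods in $G_d$, which you treat as a nuisance, is exactly where the gain comes from. Since $G_d$ is admissible and $|E(G_d)|=|E(G)|$, it is itself extremal. So it suffices to delete the single edge $uv_{f_1(u)}$ from $G_d$ and keep every edge $uw$. Some vertex must then fall below $\binom{t}{2}$, and it must lie in $B_1$ and be $u$ or a neighbour of $u$. Compare each candidate with $G$:
\begin{itemize}
\item A vertex of $B_1\setminus U_i$ loses nothing, by Claim \ref{01}.
\item A vertex $u'\in U_i\setminus\{u\}$ loses at most one triangle through $u$. It also has a neighbour in $U\setminus U_i$ (this is where $U_i\in\mathcal{U}_1$ is used), which gives it a surplus of $\lceil\alpha\Lambda\rceil\ge 1$ over $G$ by Claim \ref{02}, covering the loss.
\item The vertex $u$ loses at most $\Lambda-1$ triangles inside $D_1$ and at most one per neighbour in $U_i\setminus\{u\}$. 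Its neighbours in $B_1\setminus(U_i\cup W)$ cost nothing by Claim \ref{01}. Each $w\in W$ still yields a net surplus of at least $\lceil\alpha\Lambda\rceil-1$ by Claim \ref{02}.
\end{itemize}
Hence $u$ is the failing vertex, and $m(\lceil\alpha\Lambda\rceil-1)<\Lambda+a_i-2$.

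This also corrects your reading of the constants. The $-1$ in the denominator is the one-vertex shrink of $u$'s segment, not the triangles $uww'$. The numerator splits as $\Lambda+a_i-2=(\Lambda-1)+(a_i-1)$.
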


\begin{proof}

Following the construction of $G_d$, for each vertex $u \in B_1$, we have $N_{G_d}(u) \cap D_1 = \{v_1, v_2, \dots, v_{f_1(u)}\}$. Given $U_i \in \mathcal{U}_1$ and $u_i^j \in U_i$.
Let $G''$ be the graph obtained from $G_d$ by deleting the edge $e = u_i^j v_{f_1(u_i^j)}$. For any vertex $u \in V(G)$, let $t_k(u)$ and $t_k''(u)$ denote the number of triangles containing $u$ that intersect $V(\mathcal{D})$ in exactly $k$ vertices in $G$ and $G''$, respectively.

By the extremality of $G_d$, there must exist a vertex $u \in V(G'')$ that is contained in fewer than $\binom{t}{2}$ triangles. Since $G''$ is obtained by removing only the edge $e$, $u$ must be adjacent to $u_i^j$ in $G_d$.

Note that  $G''[V(G) \setminus V(\mathcal{D})] = G_d[V(G) \setminus V(\mathcal{D})] = G[V(G) \setminus V(\mathcal{D})]$. Consequently, every vertex in $V(G) \setminus (V(\mathcal{D}) \cup B_1)$ is contained in at least $\binom{t}{2}$ triangles in $G''$.  Furthermore, since each $D_i\in \mathcal{D}$ induces a $K_{t+1}$ in $G''$ for $1 \le i \le d$, every vertex in $V(\mathcal{D})$ is also contained in at least $\binom{t}{2}$ triangles.

It follows that $u$ must belong to $B_1$. Moreover, since the deleted edge $e$ is incident to $V(\mathcal{D})$, the number of triangles containing $u$ that do not intersect $V(\mathcal{D})$ remains unchanged, i.e., $t_0''(u) = t_0(u)$.

    \textbf{Case 1:} $u\in B_1\setminus U_i$.

Since $u \not=u_i^j $, $u$ is not incident to the deleted edge $e$. Consequently, the collection of triangles containing $u$ with two vertices in $V(\mathcal{D})$ is unaffected by the removal of $e$. It follows that

\[
t_2''(u) = \binom{|N_{G''}(u) \cap D_1|}{2} = \binom{\sum_{h=0}^d |N_G(u) \cap D_h|}{2} \ge t_2(u).
\]
Next, we consider triangles containing $u$ that intersect $V(\mathcal{D})$ in exactly one vertex. We have
\begin{align*}
t_1''(u) &= \sum_{u' \in B_1 \cap N_{G''}(u)} |N_{G''}(u') \cap N_{G''}(u) \cap D_1| \\
&= \sum_{u' \in B_1 \cap N_{G''}(u) \setminus \{u_i^j\}} |N_{G''}(u') \cap N_{G''}(u) \cap D_1| + |N_{G''}(u_i^j) \cap N_{G''}(u) \cap D_1|.
\end{align*}
By the construction of $G''$ and  $G_d$, this can be rewritten as
\begin{align*}
t_1''(u) &= \sum_{u' \in B_1 \cap N_G(u) \setminus \{u_i^j\}} \min\left\{ \sum_{h=0}^d |N_G(u') \cap D_h|, \sum_{h=0}^d |N_G(u) \cap D_h| \right\} \\
&\quad + \min\left\{ -1 + \sum_{h=0}^d |N_G(u_i^j) \cap D_h|, \sum_{h=0}^d |N_G(u) \cap D_h| \right\}.
\end{align*}
For $u' \in B_1 \setminus \{u_i^j\}$, the neighborhoods are unaffected by the deletion of the edge $e = u_i^j v_{f_1(u_i^j)}$.  Thus, the first term in the summation satisfies
\[\sum_{u' \in B_1 \cap N_G(u) \setminus \{u_i^j\}} \min\left\{ \sum_{h=0}^d |N_G(u') \cap D_h|, \sum_{h=0}^d |N_G(u) \cap D_h| \right\}
\ge \sum_{u' \in B_1 \cap N_G(u) \setminus \{u_i^j\}} \sum_{h=0}^d |N_G(u) \cap N_G(u') \cap D_h|.
\]
Applying Claim  \ref{01} to the second term, we obtain
\[
\min\left\{ -1 + \sum_{h=0}^d |N_G(u_i^j) \cap D_h|, \sum_{h=0}^d |N_G(u) \cap D_h| \right\} \ge \sum_{h=0}^d |N_G(u) \cap N_G(u_i^j) \cap D_h|.
\]

Combining these two results, we obtain
\begin{align*}
t_1''(u) &\ge \sum_{u' \in B_1 \cap N_G(u) \setminus \{u_i^j\}} \sum_{h=0}^d |N_G(u) \cap N_G(u') \cap D_h| + \sum_{h=0}^d |N_G(u) \cap N_G(u_i^j) \cap D_h| = t_1(u).
\end{align*}
Summing the contributions $t_2''(u), t_1''(u),$ and $t_0''(u)$, we find that $u$ is contained in at least $\binom{t}{2}$ triangles, yielding a contradiction.

    \textbf{Case 2:} $u\in U_i\backslash\{u_i^j\}$.

    Since $U_i \in \mathcal{U}_1$, there exists a vertex $u_{i'}^{j'} \in U \setminus U_i$ adjacent to $u$. Note that as $u \neq u_i^j$, its neighborhood in $V(\mathcal{D})$ remains unchanged in $G''$. We first observe that
\[
t_2''(u) = \binom{|N_{G''}(u) \cap D_1|}{2} = \binom{\sum_{h=0}^d |N_G(u) \cap D_h|}{2} \ge t_2(u).
\]

Next, we analyze $t_1''(u)$ by partitioning the sum over neighbors in $B_1 \cap N_{G''}(u)$. We have
\begin{align*}
t_1''(u) &= \sum_{u' \in B_1 \cap N_{G''}(u) \setminus \{u_i^j, u_{i'}^{j'}\}} |N_{G''}(u') \cap N_{G''}(u) \cap D_1| \\
&\quad + |N_{G''}(u_i^j) \cap N_{G''}(u) \cap D_1| + |N_{G''}(u_{i'}^{j'}) \cap N_{G''}(u) \cap D_1|.
\end{align*}
By the construction of $G''$, the neighborhood of $u_i^j$ in $D_1$ is reduced by exactly one vertex, while the neighborhood of $u$ and the neighborhoods of all other vertices in $B_1 \cap N_{G''}(u) \setminus \{u_i^j\}$ remain as in $G_d$. Furthermore,  by the construction of $G_d$,  this allows us to bound $t_1''(u)$ using the neighborhood sizes relative to $V(\mathcal{D})$ in  $G$:
\begin{align*}
t_1''(u) &\ge \sum_{u' \in B_1 \cap N_G(u) \setminus \{u_i^j, u_{i'}^{j'}\}} \min\left\{ \sum_{h=0}^d |N_G(u') \cap D_h|, \sum_{h=0}^d |N_G(u) \cap D_h| \right\} \\
&\quad + \min\left\{ -1 + \sum_{h=0}^d |N_G(u_i^j) \cap D_h|, \sum_{h=0}^d |N_G(u) \cap D_h| \right\} \\
&\quad + \min\left\{ \sum_{h=0}^d |N_G(u_{i'}^{j'}) \cap D_h|, \sum_{h=0}^d |N_G(u) \cap D_h| \right\}.
\end{align*}
We now apply the intersection bounds. For any $u' \in B_1 \cap N_G(u)$, the size of the intersection $|N_G(u) \cap N_G(u') \cap V(\mathcal{D})|$ is at most the minimum of their respective neighborhood sizes in $V(\mathcal{D})$. Crucially, since $u \in U_i$ and $u_{i'}^{j'} \in U \setminus U_i$ belong to distinct partition classes, by Claim \ref{02} for the term involving $u_{i'}^{j'}$ and substituting the intersection sums, we obtain:
\begin{align*}
t_1''(u) &\ge \sum_{u' \in B_1 \cap N_G(u) \setminus \{u_i^j, u_{i'}^{j'}\}} \sum_{h=0}^d |N_G(u) \cap N_G(u') \cap D_h| \\
&\quad + \left( \sum_{h=0}^d |N_G(u) \cap N_G(u_i^j) \cap D_h| - 1 \right) + \left( \sum_{h=0}^d |N_G(u) \cap N_G(u_{i'}^{j'}) \cap D_h| + \lceil \alpha\Lambda \rceil \right) \\
&= t_1(u) + \lceil \alpha\Lambda \rceil - 1 \ge t_1(u).
\end{align*}
Summing these contributions, we find $t_2''(u) + t_1''(u) + t_0''(u) \ge t_2(u) + t_1(u) + t_0(u) \ge \binom{t}{2}$, which contradicts the extremality of $G_d$.

    \textbf{Case 3:} $u=u_i^j$.

Let $N_0 = N_G(u) \cap (U \setminus U_i)$. In this case, $u$ is an endpoint of the deleted edge $e$, hence its neighborhood size in $V(\mathcal{D})$ is reduced by one. We first bound $t_2''(u)$:
\begin{align*}
t_2''(u) &= \binom{|N_{G''}(u) \cap V(\mathcal{D})|}{2} = \binom{|N_{G_d}(u) \cap V(\mathcal{D})| - 1}{2}\\&= \binom{|N_{G_d}(u) \cap V(\mathcal{D})|}{2}-(|N_{G_d}(u) \cap V(\mathcal{D})| - 1)  \ge t_2(u) - (\Lambda - 1).
\end{align*}

Next, we evaluate $t_1''(u)$ by partitioning the neighbors of $u$ in $B_1 \cap N_{G''}(u)$ into three disjoint sets: $B_1 \setminus (U_i \cup N_0)$, $U_i \setminus \{u\}$, and $N_0$.

The deletion of the edge $e$ reduces the neighborhood size of $u$ in $V(\mathcal{D})$ by one.
For $u' \in (B_1 \cap N_{G''}(u)) \setminus (U_i \cup N_0)$, the neighborhood of $u'$ in $V(\mathcal{D})$ is unchanged, and we have
\[
|N_{G''}(u') \cap N_{G''}(u) \cap V(\mathcal{D})| \ge \min\Big\{-1 + \sum_{j=0}^d |N_G(u) \cap D_j|,\, \sum_{j=0}^d |N_G(u') \cap D_j|\Big\}.
\]
By Claim \ref{01}, it follows that
\[ |N_{G''}(u') \cap N_{G''}(u) \cap V(\mathcal{D})| \ge\sum_{h=0}^d |N_G(u) \cap N_G(u') \cap D_h|. \]
For  $u' \in U_i \setminus \{u\}$, since $u'$ is not incident to $e$, the neighborhood of $u'$ in $V(\mathcal{D})$ is unchanged. The only change is that $u$ loses one neighbor in $V(\mathcal{D})$, so the intersection size satisfies
\[
|N_{G''}(u') \cap N_{G''}(u) \cap V(\mathcal{D})| \ge \sum_{h=0}^d |N_G(u) \cap N_G(u') \cap D_h| - 1.
\]
For $u' \in N_0$, since $u \in U_i$ and $u' \in U \setminus U_i$ belong to distinct partition classes, Claim~\ref{02} yields an additional margin of $\lceil \alpha\Lambda \rceil$. After accounting for the loss of one due to the edge deletion, the net increment is at least $\lceil \alpha\Lambda \rceil - 1$:
\[
|N_{G''}(u') \cap N_{G''}(u) \cap V(\mathcal{D})| \ge \sum_{h=0}^d |N_G(u) \cap N_G(u') \cap D_h| + \lceil \alpha\Lambda \rceil - 1.
\]

Combining these three contributions, we obtain

\begin{align*}
t_1''(u) &= \sum_{u' \in B_1 \cap N_G(u) } |N_{G''}(u') \cap N_{G''}(u) \cap D| \\
&\ge \sum_{u' \in B_1 \cap N_G(u) \setminus (U_i \cup N_0)} \sum_{h=0}^d |N_G(u) \cap N_G(u') \cap D_h| \\
&\quad + \sum_{u' \in U_i \setminus \{u\}} \left( \sum_{h=0}^d |N_G(u) \cap N_G(u') \cap D_h| - 1 \right) \\
&\quad + \sum_{u' \in N_0} \left( \sum_{h=0}^d |N_G(u) \cap N_G(u') \cap D_h| + \lceil \alpha\Lambda \rceil - 1 \right) \\
&= t_1(u) - (a_i - 1) + (\lceil \alpha\Lambda \rceil - 1)|N_0|.
\end{align*}

Summing the contributions, the total number of triangles containing $u$ in $G''$ satisfies:
\begin{align*}
t_2''(u) + t_1''(u) + t_0''(u) &\ge t_2(u) - (\Lambda - 1) + t_1(u) - (a_i - 1) + (\lceil \alpha\Lambda \rceil - 1)|N_0| + t_0(u) \\
&\ge \binom{t}{2} - (\Lambda + a_i - 2) + (\lceil \alpha\Lambda \rceil - 1)|N_0|.
\end{align*}
Since $u$ is contained in fewer than $\binom{t}{2}$ triangles in $G''$, it follows that:
\[
(\lceil \alpha\Lambda \rceil - 1)|N_0| < \Lambda + a_i - 2,
\]
which yields $|N_0| \le \frac{\Lambda + a_i - 2}{\lceil \alpha\Lambda \rceil - 1}$, that is, $|N_G(u_i^j) \cap (U \setminus U_i)|\le \frac{\Lambda + a_i - 2}{\lceil \alpha\Lambda \rceil - 1}$.
\end{proof}

Continuing the proof of Lemma \ref{b}, we assume for the sake of contradiction that $b > 6\alpha\Lambda + 7$ (respectively, $b > 4\alpha\Lambda + 5$ provided $\Lambda \ge 8$). We first construct $X_0$, then define $J$ and verify the contradiction.

\medskip
\noindent \textbf{The Construction of the independent set.}

We construct an independent set $X_0$ of size $\lceil 2\alpha\Lambda \rceil + 2$ in $U$ as follows.

Let $|\mathcal{U}_1| = b'$, so $|\mathcal{U}_2| = b - b'$. Order $\mathcal{U}_1$ as $(U_{\gamma_1}, U_{\gamma_2}, \ldots, U_{\gamma_{b'}})$ with $a_{\gamma_1} \leq a_{\gamma_2} \leq \cdots \leq a_{\gamma_{b'}}$, where $a_{\gamma_i}=|U_{\gamma_i}|$.  For convenience, we write $V_j = U_{\gamma_j}$ for $1 \leq j \leq b'$.

\begin{enumerate}
    \item For each $U_i \in \mathcal{U}_2$, choose $w_i \in U_i \cap Z$ and initialize $X_0 = \{w_i : U_i \in \mathcal{U}_2\}$.
    \item If $|X_0| \ge \lceil 2\alpha\Lambda \rceil + 2$, retain exactly $ \lceil 2\alpha\Lambda \rceil + 2$ vertices in $X_0$ and terminate. Otherwise, set $k = 1$ and $R_1 = U \setminus \bigcup_{U_i \in \mathcal{U}_2} U_i$.
    \item Let $\ell_k = \min \{ j : V_j \cap R_k \neq \emptyset \}$. Pick $v_k \in V_{\ell_k} \cap R_k$ and set $X_0 \leftarrow X_0 \cup \{v_k\}$.
    \item Set $R_{k+1} = R_k \setminus (V_{\ell_k} \cup N_G(v_k))$.
    \item If $|X_0| = \lceil 2\alpha\Lambda \rceil + 2$ or $R_{k+1} = \emptyset$, terminate. Otherwise, increment $k$ and return to Step 3.
\end{enumerate}

Since the neighbors of any vertex in $Z$ are restricted to its respective part $U_i$, and we select at most one vertex from each such part in Step 1, $X_0$ is an independent set at the end of the first step. The greedy selection in Steps 3--5 maintains this independence by removing the neighborhood of each newly added vertex.

\medskip
\noindent \textbf{The Feasibility of the construction.}

It remains to verify that the algorithm does not terminate prematurely.

\medskip
\noindent\textbf{Case 1: $\Lambda \leq 7$.}

In this case, $b >  6\alpha\Lambda + 7$. We prove by induction that $\ell_k \leq 3k - 2$ and $R_k \neq \emptyset$ for all $k \leq \lceil 2\alpha\Lambda \rceil + 2 - (b - b')$.

Since $b >  6\alpha\Lambda + 7$, we have
\[
3(\lceil 2\alpha\Lambda \rceil + 2 - (b - b')) - 2 \le 6\alpha\Lambda +7 - 3(b - b') < b - (b - b') = b'.
\]

For $k = 1$, clearly $\ell_1 = 1 \le 3 \times 1-2$.
Suppose the statement holds for all $k \leq k_0 - 1$. We now prove it for $k = k_0$.  By Claim~\ref{m},
\begin{align*}
|(V_{\ell_k} \cup N_G(v_k)) \cap U|
&= |V_{\ell_k}| + |N_G(v_k) \cap (U \setminus V_{\ell_k})| \\
&\leq a_{\gamma_{3k-2}} + \frac{\Lambda + a_{\gamma_{3k-2}} - 2}{\lceil \alpha\Lambda \rceil - 1} \\
&= \left(1 + \frac{1}{\lceil \alpha\Lambda \rceil - 1}\right) a_{\gamma_{3k-2}} + \frac{\Lambda - 2}{\lceil \alpha\Lambda \rceil - 1}.
\end{align*}
For $2 \le \Lambda \le 7$, we have $\lceil \alpha\Lambda \rceil \geq 2$. Then
\[
1 + \frac{1}{\lceil \alpha\Lambda \rceil - 1} \leq 1 + 1 = 2
\quad \text{and} \quad
\frac{\Lambda - 2}{\lceil \alpha\Lambda \rceil - 1} < \frac{1}{\alpha} = \frac{7}{4}.
\]
Since $|(V_{\ell_k} \cup N_G(v_k)) \cap U|$  is an integer, we conclude  that $$|(V_{\ell_k} \cup N_G(v_k)) \cap U| \leq 2a_{\gamma_{3k-2}} + 1.$$
As the sequence $\{a_{\gamma_j}\}_{j=1}^{b'}$ is non-decreasing and $a_{\gamma_j} \geq 1$ for all $j$, we have $$2a_{\gamma_{3k-2}} + 1 \leq a_{\gamma_{3k-2}} + a_{\gamma_{3k-1}} + a_{\gamma_{3k}}.$$ Consequently, summing these bounds and using the fact that the sets $V_j$ are pairwise
 disjoint, we obtain:
\[
\left| \bigcup_{k=1}^{k_0-1} (V_{\ell_k} \cup N_G(v_k)) \cap U \right|
\leq \sum_{k=1}^{k_0-1} \left| (V_{\ell_k} \cup N_G(v_k)) \cap U \right|
\leq \sum_{j=1}^{3k_0-3} a_{\gamma_j}
= \left| \bigcup_{j=1}^{3k_0-3} V_j \right|
< \left| \bigcup_{j=1}^{3k_0-2} V_j \right|.
\]

By the definition of $R_{k}$, we obtain
\[
\left| R_{k_0} \cap \left( \bigcup_{j=1}^{3k_0-2} V_j \right) \right|
\geq \left| \bigcup_{j=1}^{3k_0-2} V_j \right| - \left| \bigcup_{k=1}^{k_0-1} (V_{\ell_k} \cup N_G(v_k)) \cap U \right|
> 0,
\]
which implies  $\ell_{k_0} \leq 3k_0 - 2$ and $R_{k_0} \neq \emptyset$.

\medskip
\noindent\textbf{Case 2: $\Lambda \geq 8$.}

In this case, $b > 4\alpha\Lambda + 5$. We prove by induction that $\ell_k \leq 2k - 1$ and $R_k \neq \emptyset$ for all $k \leq \lceil 2\alpha\Lambda \rceil + 2 - (b - b')$.

Since $b > 4\alpha\Lambda + 5$,
\[
2(\lceil 2\alpha\Lambda \rceil + 2 - (b - b')) - 1 \le  4\alpha\Lambda  - 2(b - b')+5 < b - (b - b') = b'.
\]

For $k = 1$, clearly $\ell_1 = 1 \leq  2 \times 1 - 1$.

Assume the statement holds for all $k \leq k_0 - 1$. By Claim~\ref{m},
\begin{align*}
|(V_{\ell_k} \cup N_G(v_k)) \cap U|
&= |V_{\ell_k}| + |N_G(v_k) \cap (U \setminus V_{\ell_k})| \\
&\leq a_{\gamma_{2k-1}} + \frac{\Lambda + a_{\gamma_{2k-1}} - 2}{\lceil \alpha\Lambda \rceil - 1} \\
&= \left(1 + \frac{1}{\lceil \alpha\Lambda \rceil - 1}\right) a_{\gamma_{2k-1}} + \frac{\Lambda - 2}{\lceil \alpha\Lambda \rceil - 1}.
\end{align*}

For $\Lambda \geq 8$, we have $\lceil \alpha\Lambda \rceil \geq 5$. So
\[
1 + \frac{1}{\lceil \alpha\Lambda \rceil - 1} \leq 1 + \frac{1}{4} = \frac{5}{4}
\quad \text{and} \quad
\frac{\Lambda - 2}{\lceil \alpha\Lambda \rceil - 1} < \frac{1}{\alpha} = \frac{7}{4}.
\]

Since the sequence $\{a_{\gamma_j}\}_{j=1}^{b'}$ is non-decreasing with $a_{\gamma_j} \geq 1$ for all $j$, we can bound the size as follows:
\[
|(V_{\ell_k} \cup N_G(v_k)) \cap U|
< \frac{5}{4} a_{\gamma_{2k-1}} + \frac{7}{4}
\leq 2a_{\gamma_{2k-1}} + 1
\leq a_{\gamma_{2k-1}} + a_{\gamma_{2k}} + 1.
\]

As this quantity is an integer, it follows that
\[
|(V_{\ell_k} \cup N_G(v_k)) \cap U| \leq a_{\gamma_{2k-1}} + a_{\gamma_{2k}}.
\]

Consequently, summing these bounds and using the fact that the sets $V_j$ are pairwise disjoint, we deduce:
\[
\left| \bigcup_{k=1}^{k_0-1} (V_{\ell_k} \cup N_G(v_k)) \cap U \right|
\leq \sum_{k=1}^{k_0-1} \left| (V_{\ell_k} \cup N_G(v_k)) \cap U \right|
\leq \sum_{k=1}^{k_0-1} (a_{\gamma_{2k-1}} + a_{\gamma_{2k}})
= \sum_{j=1}^{2k_0-2} a_{\gamma_j}
< \left| \bigcup_{j=1}^{2k_0-1} V_j \right|.
\]

By the definition of $R_{k}$, we obtain
\[
\left| R_{k_0} \cap \left( \bigcup_{j=1}^{2k_0-1} V_j \right) \right|
\geq \left| \bigcup_{j=1}^{2k_0-1} V_j \right| - \left| \bigcup_{k=1}^{k_0-1} (V_{\ell_k} \cup N_G(v_k)) \cap U \right|
> 0,
\]
which  implies $\ell_{k_0} \leq 2k_0 - 1$ and $R_{k_0} \neq \emptyset$, completing the induction.

\medskip
In both cases, we successfully construct an independent set $X_0$ of size exactly $\lceil 2\alpha\Lambda \rceil + 2$ in $U$.

Based on the independent set $X_0$, we will construct a graph $J$ on $V(G)$ such that $|E(J)| < |E(G)|$. To ensure that $J$ remains a valid candidate for Problem \ref{13}, that is, every vertex in $J$ is still contained in at least $\binom{t}{2}$ triangles, we first establish the following crucial claim.

\begin{claim}\label{nx}
    For any vertex $v \in B_1$ and any set  $\{x_1, \dots, x_{\zeta}\} \subseteq X_0$, we have
    \[
    \sum_{j=1}^{\zeta} |N(v) \cap N(x_j) \cap V(\mathcal{D})| \le \zeta(\Lambda - \lceil \alpha\Lambda \rceil) + \Lambda \le (1-\alpha)\zeta\Lambda + \Lambda.
    \]
\end{claim}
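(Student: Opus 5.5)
The plan is to bound each term using Claim~\ref{02}, which controls common neighbourhoods in $V(\mathcal{D})$ of vertices from distinct classes $U_i\neq U_{i'}$. The key structural fact is that $X_0$ has at most one vertex in each class $U_i$. Step~1 picks one $w_i$ per $U_i\in\mathcal{U}_2$. Step~4 deletes all of $V_{\ell_k}$ from $R_{k+1}$ after $v_k$ is chosen, and $R_1$ already excludes the classes in $\mathcal{U}_2$. So the vertices $x_1,\dots,x_\zeta$ lie in pairwise distinct classes, say $x_j\in U_{i_j}$ with $i_1,\dots,i_\zeta$ distinct.

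Next I fix $v\in B_1$. There are two cases. If $v$ lies in no class $U_i$ with $1\le i\le b$, or lies in a class containing no $x_j$, then $v$ is in a class different from every $x_j$'s class. If instead $v\in U_{i_{j_0}}$ for some $j_0$, then only the single term $j=j_0$ is exceptional. I bound that term trivially by $|N(v)\cap V(\mathcal{D})|\le \Lambda$, using maximality of $f_1(u_1)=\Lambda$ over $B_1$. This is exactly where the additive $+\Lambda$ in the statement comes from.

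For every non-exceptional $j$, I want $|N(v)\cap N(x_j)\cap V(\mathcal{D})|\le \Lambda-\lceil\alpha\Lambda\rceil$. When $v\in U$ lies in a different class from $x_j$, Claim~\ref{02} gives
\[
|N(v)\cap N(x_j)\cap V(\mathcal{D})|\le \sum_{h}|N_G(x_j)\cap D_h|-\lceil\alpha\Lambda\rceil\le \Lambda-\lceil\alpha\Lambda\rceil .
\]
When $v\in B_1\setminus U$, Claim~\ref{02} does not apply directly, and this is the main obstacle. I would redo its argument with $x_j\in U_{i_j}$ playing the role of the later-indexed vertex. By \eqref{equaas}, $x_j$ has at least $\lceil\alpha\Lambda\rceil$ neighbours inside $V(\mathcal{D}_{i_j})$.

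\begin{itemize}
\item If $v$ is still in $B_{i_j}$, then $v\notin U_{i_j}$, so its trace on $V(\mathcal{D}_{i_j})$ differs from that of $x_j$. The maximality of $f_{i_j}(x_j)$ then forces $f_{i_j}(x_j)\ge f_{i_j}(v)$. I would argue, via the nesting considerations behind Claim~\ref{01}, that $x_j$ keeps enough private neighbours. The cleaner route is the symmetric bound: $v$ misses at least $\lceil\alpha\Lambda\rceil$ of its available slots relative to $\Lambda$.
\item The simplest uniform route is to bound $|N(v)\cap N(x_j)\cap V(\mathcal{D})|\le |N(x_j)\cap V(\mathcal{D})|-|N(x_j)\cap V(\mathcal{D})\setminus N(v)|$. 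It then suffices to show $x_j$ has $\lceil\alpha\Lambda\rceil$ neighbours in $V(\mathcal{D})$ outside $N(v)$. This can fail only if $v$ shares all of $x_j$'s neighbourhood in $V(\mathcal{D}_{i_j})$.
\item In that case $f_{i_j}(v)\ge f_{i_j}(x_j)$, so by maximality $f_{i_j}(v)=f_{i_j}(x_j)$ with equal traces. That gives $v\in U_{i_j}$, contradicting $v\notin U$.
\end{itemize}

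The cost is that this argument only guarantees about one private neighbour rather than $\lceil\alpha\Lambda\rceil$. If the gap cannot be closed, I would let such $v$ contribute at most one exceptional term, absorbed into the $+\Lambda$. This point needs care, since $v$ may fail to be maximal in several $\mathcal{D}_{i_j}$ simultaneously.

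Summing the bounds gives $\sum_j|N(v)\cap N(x_j)\cap V(\mathcal{D})|\le \zeta(\Lambda-\lceil\alpha\Lambda\rceil)+\Lambda$. The final inequality then follows from $\lceil\alpha\Lambda\rceil\ge\alpha\Lambda$.
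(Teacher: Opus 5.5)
\textbf{There is a genuine gap.} It is exactly the case you flag: $v\in B_1\setminus U$. This is not a side case. In Case~4.2 of the construction of $J$ the claim is applied to arbitrary $v\in B_1\setminus X$, and most of $B_1$ lies in no class $U_i$ with $i\le b$.

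For such $v$, the per-term bound $|N(v)\cap N(x_j)\cap V(\mathcal{D})|\le \Lambda-\lceil\alpha\Lambda\rceil$ does not follow from the procedure. Nothing prevents $v$ from having trace on $V(\mathcal{D})$ equal to $N(x_j)\cap V(\mathcal{D})$ with one vertex of $V(\mathcal{D}_{i_j})$ removed. Then $v\in B_1\setminus U$ and the maximality of $f_{i_j}(x_j)$ is respected. Yet the $j$-th term equals $f_1(x_j)-1$, which can be $\Lambda-1$.

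Your fallback of ``at most one exceptional term absorbed into the $+\Lambda$'' is not justified either. The excess of the $j$-th term over $\Lambda-\lceil\alpha\Lambda\rceil$ is controlled by $|N(v)\cap N(x_j)\cap V(\mathcal{D}_{i_j})|$. Nothing you have shown stops this from being positive for many $j$ at once. What is bounded is the total of these excesses, not their number.

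\textbf{The missing idea is this amortization.} Order the classes so that $i_1<\dots<i_\zeta$ (this needs the distinctness you correctly derived) and put $E_j=V(\mathcal{D}_{i_j})$.
\begin{itemize}
\item The vertex $x_j\in U_{i_j}$ has the same trace on $E_j$ as $u_{i_j}$, and every clique of $\mathcal{D}_{i_j}$ meeting $N(u_{i_j})$ is discarded at iteration $i_j$. Hence $N(x_j)\cap E_j\subseteq E_j\setminus E_{j+1}$.
\item By maximality of $\Lambda$ together with \eqref{equaas}, $|N(x_j)\cap (V(\mathcal{D})\setminus E_j)|=f_1(x_j)-f_{i_j}(x_j)\le \Lambda-\lceil\alpha\Lambda\rceil$.
\item So the $j$-th term is at most $|N(v)\cap(E_j\setminus E_{j+1})|+\Lambda-\lceil\alpha\Lambda\rceil$.
\item The sets $E_j\setminus E_{j+1}$ are pairwise disjoint, so the first summands add up to at most $f_1(v)\le\Lambda$.
\end{itemize}
This is the paper's argument. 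It handles every $v\in B_1$ uniformly and needs neither Claim~\ref{02} nor a case split on the class of $v$. It also shows that the additive $\Lambda$ is an aggregate over all $j$, not a single exceptional term. Your treatment of $v\in U$ via Claim~\ref{02} is valid, but it does not extend to the general case.
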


\begin{proof}
    For each $1 \le j \le \zeta$, let $x_j \in U_{i_j}$ and $E_j = V(\mathcal{D}_{i_j})$. Since the sets $U_i$ are formed in distinct iterations, we may assume without loss of generality that $i_1 < i_2 < \dots < i_{\zeta}$. By the definition of our iterative procedure, all neighbors of $x_j$ in $E_j$ are removed in iteration $i_j$; that is, $N(x_j) \cap E_j \subseteq E_j \setminus E_{j+1}$. Moreover, by \eqref{equaas}, $x_j$ satisfies $f_{i_j}(x_j) \ge \lceil \alpha\Lambda \rceil$. Then we have
    \[
    |N(x_j) \cap (V(\mathcal{D}) \setminus E_j)| = f_1(x_j) - f_{i_j}(x_j) \le \Lambda - \lceil \alpha\Lambda \rceil.
    \]
So we have
  %  We now partition the common neighborhood of $v$ and $x_j$ within $V(\mathcal{D})$ into two parts:
    \begin{align*}
        |N(v) \cap N(x_j) \cap V(\mathcal{D})|
        &= |N(v) \cap N(x_j) \cap E_j| + |N(v) \cap N(x_j) \cap (V(\mathcal{D}) \setminus E_j)| \\
        &\le |N(v) \cap (E_j \setminus E_{j+1})| + |N(x_j) \cap (V(\mathcal{D}) \setminus E_j)| \\
        &\le |N(v) \cap (E_j \setminus E_{j+1})| + (\Lambda - \lceil \alpha\Lambda \rceil).
    \end{align*}
    Summing this over $j = 1, \dots, \zeta$, and noting that the sets $E_j \setminus E_{j+1}$ are pairwise disjoint subsets of $V(\mathcal{D})$, we obtain
    \begin{align*}
        \sum_{j=1}^{\zeta} |N(v) \cap N(x_j) \cap V(\mathcal{D})|
        &\le \sum_{j=1}^{\zeta} |N(v) \cap (E_j \setminus E_{j+1})| + \zeta(\Lambda - \lceil \alpha\Lambda \rceil) \\
        &\le |N(v) \cap V(\mathcal{D})| + \zeta(\Lambda - \lceil \alpha\Lambda \rceil) \\
        &\le \Lambda + \zeta(\Lambda - \lceil \alpha\Lambda \rceil),
    \end{align*}
    where the last step follows from the fact that $f_1(v) \le \Lambda$.
    The second inequality in the lemma statement follows immediately from $\Lambda - \lceil \alpha\Lambda \rceil \le (1-\alpha)\Lambda$.
\end{proof}

\subsection*{The Construction of the Graph \texorpdfstring{$J$}{J}}

We construct a graph $J$ on the vertex set $V(G)$ to demonstrate that $G$ cannot be extremal. Let $x = |X_0| - 1$ if $\Lambda = 2$, and $x = |X_0|$ otherwise. Let $X \subseteq X_0$ be a subset of cardinality $x$, and write $X = \{y_1, \ldots, y_x\}$. Furthermore, let $D_1 = \{y_{x+1}, \ldots, y_{x+t+1}\}$ and define $W = X \cup D_1$, endowed with the ordering $y_1 < y_2 < \cdots < y_{x+t+1}$.

The graph $J$ is obtained from $G$ by applying the following three modifications to its edge set:
\begin{enumerate}[label=(\roman*)]
    \item \textbf{Reconfiguration of $X$ and $V(\mathcal{D})$:} The set $X$, which is independent in $G$, is transformed into a clique $K_x$. Simultaneously, all edges in $G$ between $X$ and the union of cliques $V(\mathcal{D}) = \bigcup_{i=1}^d D_i$ are removed.
    \item \textbf{Isolation of Redundant Cliques:} For each $j \in \{2, \dots, d\}$, the clique $D_j$ is made isolated by removing all edges between $D_j$ and  $V(G) \setminus D_j$.
    \item \textbf{Neighborhood Shifting:} For any vertex $v \in B_1 \setminus X$, we standardize its adjacencies into $W = X \cup D_1$. Specifically, $N_J(v) \cap W$ is defined to consist of $N_G(v) \cap X$ together with the $f_1(v)$ vertices of $W \setminus (N_G(v) \cap X)$ that are smallest under the ordering of $W$.
\end{enumerate}

This construction replaces the dispersed adjacencies of $ B_1 \setminus  X$ with a standardized distribution concentrated on $X \cup D_1$, while simultaneously altering the internal structure of $X$. To confirm that this modification reduces the total number of edges, we now evaluate  $|E(J)| - |E(G)|$.

Observe that for each $v \in B_1 \setminus X$, the neighborhood shifting in step (iii) is degree-preserving relative to $V(\mathcal{D}) \cup X$. Specifically, the construction ensures $|N_J(v) \cap W| = |N_G(v) \cap X| + f_1(v) = |N_G(v) \cap (X \cup V(\mathcal{D}))|.$
Consequently, the contribution of these adjacencies to the net change $|E(J)| - |E(G)|$ vanishes. The difference is therefore determined entirely by the internal edges of $X$ and the edges incident to $X$ in $V(\mathcal{D})$:
\begin{align*}
    |E(J)| - |E(G)| &\leq |E(J[X])| - \sum_{j=1}^x |N_G(y_j) \cap V(\mathcal{D})| \\
    &= \binom{x}{2} - \sum_{j=1}^x f_1(y_j) \\
    &= \sum_{j=1}^x \left( \frac{x-1}{2} - f_1(y_j) \right).
\end{align*}

To establish $|E(J)| < |E(G)|$, it suffices to show that $f_1(y_j) > (x-1)/2$ for all $1 \le j \le x$. By the construction of $X$, each $y_j \in X$ resides in some set $U_i$ with $i \le b_1$, which makes the threshold condition in \eqref{def:bi} applicable to $f_i(y_j)$. Together with the monotonicity property $f_1(y_j) \ge f_i(y_j)$ from \eqref{equaas}, we obtain the following two cases.
\begin{itemize}
    \item If $\Lambda \ge 3$, the lower bound on $f_i(y_j)$ from \eqref{def:bi} ensures that
    \[ f_1(y_j) \ge f_i(y_j) > \frac{\lceil 2\alpha\Lambda \rceil + 1}{2}. \]
    Since $x \le \lceil 2\alpha\Lambda \rceil + 2$, it follows immediately that $f_1(y_j) > (x-1)/2$.
    \item If $\Lambda = 2$, we have $x = 4$ by construction. Here, \eqref{def:bi} and \eqref{equaas} imply $f_1(y_j) \ge f_i(y_j) = 2$, which strictly exceeds $(x-1)/2 = 3/2$.
\end{itemize}
In either case, the inequality $|E(J)| < |E(G)|$ holds.

    It remains to verify that every vertex $v \in V(J)$ belongs to at least $\binom{t}{2}$ triangles in $J$. For $v \in V(G)$, let $t_{ij}(v)$ and $t'_{ij}(v)$ denote the number of triangles containing $v$ in $G$ and $J$, respectively, with exactly $i$ vertices in $V(\mathcal{D})$ and $j$ vertices in $X \setminus \{v\}$, where $0 \le i, j, i+j \le 2$. By the construction, any triangle in $G$ containing $v$ with no vertices in $V(\mathcal{D}) \cup X$ is preserved; thus, $t'_{00}(v) = t_{00}(v)$ for all $v \in V(G)$. We proceed by considering the location of $v$.

\noindent \textbf{Case 1:} $v \in V(\mathcal{D})$. \\
By the construction of $J$, each $D_i$ is preserved as a $(t+1)$-clique. Thus, any $v \in D_i$ is contained in $\binom{t}{2}$ triangles within $D_i$, satisfying the requirement.

\medskip
\noindent \textbf{Case 2:} $v \in V(G) \setminus (V(\mathcal{D}) \cup B_1)$. \\
For such $v$, our construction ensures $N_J(v) = N_G(v)$. Since no edges within $V(G) \setminus V(\mathcal{D})$ are removed, we have $G[N_G(v)] \subseteq J[N_J(v)]$. It follows that the triangle degree of $v$ in $J$ is at least its  triangle degree in $G$, which is at least $\binom{t}{2}$.

\medskip
\noindent \textbf{Case 3:} $v \in X$. \\
Recall that $X$ is an independent set in $G$, so $t_{ij}(v) = 0$ for any $j \ge 1$. Consequently, the  triangle degree of $v$ in $G$ consists only of terms $t_{20}, t_{10},$ and $t_{00}$. In $J$, the set $X$ becomes a clique, and all edges in $G$ between $X$ and  $V(\mathcal{D}) = \bigcup_{i=1}^d D_i$ are removed. The total number of triangles containing $v$ in $J$ is given by:
\begin{equation} \label{eq:case3_main}
    t'_{02}(v) + t'_{01}(v) + t'_{00}(v) \ge \binom{x-1}{2} + \sum_{u \in N_J(v) \setminus X} (|N_J(u) \cap X| - 1) + t'_{00}(v).
\end{equation}

We show that each term in \eqref{eq:case3_main} is bounded from below by its counterpart in $G$. First, the condition $x \ge \lceil 2\alpha\Lambda \rceil + 1$ immediately implies
\[
\binom{x-1}{2} \ge \binom{\lceil 2\alpha\Lambda \rceil}{2} \ge \binom{\Lambda}{2} \ge t_{20}(v).
\]
For the second term, recall that the neighborhood shifting rule ensures $|N_J(u) \cap X| \ge \min\{ |N_G(u) \cap (V(\mathcal{D}) \cup X)|, x \}$ for any $u \in V(G) \setminus X$. Since $v \in N_G(u) \cap X$ and $x-1 \ge \lceil 2\alpha\Lambda \rceil > \Lambda \ge |N_G(u) \cap V(\mathcal{D})|$, it follows that $|N_J(u) \cap X| - 1 \ge |N_G(u) \cap V(\mathcal{D})|$. Combined with the containment $N_G(v) \setminus V(\mathcal{D}) \subseteq N_J(v) \setminus X$, we obtain
\begin{align*}
    \sum_{u \in N_J(v) \setminus X} (|N_J(u) \cap X| - 1) &\ge \sum_{u \in N_G(v) \setminus V(\mathcal{D})} |N_G(u) \cap V(\mathcal{D})| \ge t_{10}(v).
\end{align*}
Finally, noting that $t'_{00}(v) = t_{00}(v)$, we conclude that
\[
t'_{02}(v) + t'_{01}(v) + t'_{00}(v) \ge t_{20}(v) + t_{10}(v) + t_{00}(v) \ge \binom{t}{2},
\]
as desired.

\medskip
\noindent \textbf{Case 4:} $v \in B_1 \setminus X$. \\
For $v \in B_1 \setminus X$, we categorize the triangles containing $v$ in $J$ based on the partition $\{V(\mathcal{D}), X, V(G) \setminus (V(\mathcal{D})\cup X)\}$. Since $D_2, \dots, D_d$ are isolated in $J$ and there are no adjacencies between $D_1$ and $X$, any triangle incident to $v$ cannot simultaneously contain vertices from both $V(\mathcal{D})$ and $X$. Consequently, the cross-terms $t'_{ij}(v)$ with $i \ge 1, j \ge 1$ (specifically $t'_{11}$) vanish. The  triangle-degree of $v$ in $J$ decomposes into  the following four components:
\begin{align*}
    \sum_{0 \le i, j \le 2} t'_{ij}(v)
    &= \underbrace{\binom{|N_J(v) \cap D_1|}{2}}_{t'_{20}(v)} + \underbrace{\binom{|N_J(v) \cap X|}{2}}_{t'_{02}(v)} \\
    &\quad + \underbrace{\sum_{u \in N_J(v) \setminus (V(\mathcal{D}) \cup X)} |N_J(u) \cap N_J(v) \cap (D_1 \cup X)|}_{t'_{10}(v) + t'_{01}(v)} + t'_{00}(v).
\end{align*}
The right-hand side accounts for all possible triangle configurations:
\begin{itemize}
    \item The first two binomial terms count triangles formed entirely within $\{v\} \cup D_1$ and $\{v\} \cup X$, representing $t'_{20}$ and $t'_{02}$, respectively.
    \item The summation term aggregates triangles containing $v$, one ``external" vertex $u \in V(G) \setminus (V(\mathcal{D}) \cup X)$, and a third vertex $w \in D_1 \cup X$. Because $D_1$ and $X$ are disjoint, the cardinality $|N_J(u) \cap N_J(v) \cap (D_1 \cup X)|$ is simply the sum $|N_J(u) \cap N_J(v) \cap D_1| + |N_J(u) \cap N_J(v) \cap X|$. These two parts correspond exactly to the counts $t'_{10}(v)$ and $t'_{01}(v)$.
    \item $t'_{00}(v)$ corresponds to triangles whose vertices lie entirely outside $V(\mathcal{D}) \cup X$, which remain unchanged from $G$.
\end{itemize}
Note that $N_J(v) \setminus (V(\mathcal{D}) \cup X) = N_G(v) \setminus (V(\mathcal{D}) \cup X)$, as the construction of $J$ preserves all adjacencies within the complement of $V(\mathcal{D}) \cup X$. Furthermore, the neighborhood shifting ensures that for each $u \in V(G) \setminus (V(\mathcal{D}) \cup X)$, the common neighborhood of $u$ and $v$ within the target set $D_1 \cup X$ is no smaller than their original common neighborhood in $V(\mathcal{D}) \cup X$. Specifically, $|N_J(u) \cap N_J(v) \cap (D_1 \cup X)| \ge |N_G(u) \cap N_G(v) \cap (V(\mathcal{D}) \cup X)|$. Summing over $u \in N_G(v) \setminus (V(\mathcal{D}) \cup X)$ and noting that $t'_{00}(v) = t_{00}(v)$, we obtain the following inequality chain:
\begin{align*}
    &\sum_{u \in N_J(v) \setminus (V(\mathcal{D}) \cup X)} |N_J(u) \cap N_J(v) \cap (D_1 \cup X)| + t'_{00}(v)\\
    &\ge \sum_{u \in N_G(v) \setminus (V(\mathcal{D}) \cup X)} |N_G(u) \cap N_G(v) \cap (V(\mathcal{D}) \cup X)| + t_{00}(v) \\
    &= t_{10}(v) + t_{01}(v) + t_{00}(v).
\end{align*}
Since $X$ is an independent set in $G$, we have $t_{02}(v) = 0$. Consequently, to establish that $v$ belongs to at least $\binom{t}{2}$ triangles in $J$, it suffices to verify that the triangles formed entirely within $D_1 \cup X$ satisfy:
\begin{equation} \label{eq:case4_final_goal}
\binom{|N_J(v) \cap D_1|}{2} + \binom{|N_J(v) \cap X|}{2} \ge t_{20}(v) + t_{11}(v).
\end{equation}

We distinguish two cases based on the neighborhood of $v$ in $D_1$ under the graph $J$.

\medskip
\noindent \textbf{Case 4.1:} $N_J(v) \cap D_1 = \emptyset$. \\
By the neighborhood shifting rule, if $v$ has no neighbors in $D_1$, then all its adjacencies in $V(\mathcal{D}) \cup X$ must have been concentrated into $X$. Thus, we have $|N_J(v) \cap X| = |N_G(v) \cap (X \cup V(\mathcal{D}))|$. Recalling that $t_{02}(v) = 0$ as $X$ is an independent set in $G$, we obtain:
\begin{align*}
    \binom{|N_J(v) \cap D_1|}{2} + \binom{|N_J(v) \cap X|}{2} &= \binom{|N_G(v) \cap (X \cup V(\mathcal{D}))|}{2} \\
    &= \binom{|N_G(v) \cap X|}{2} + |N_G(v) \cap X| \cdot |N_G(v) \cap V(\mathcal{D})| + \binom{|N_G(v) \cap V(\mathcal{D})|}{2} \\
    &\ge t_{11}(v) + t_{20}(v).
\end{align*}
The last inequality follows because $|N_G(v) \cap X| \cdot |N_G(v) \cap V(\mathcal{D})|$ provides a trivial upper bound on the number of pairs $\{u, d\}$ with $u \in N_G(v) \cap X$ and $d \in N_G(v) \cap V(\mathcal{D})$ that form a triangle with $v$.

\medskip
\noindent \textbf{Case 4.2:} $N_J(v) \cap D_1 \neq \emptyset$. \\
In this case, the shifting rule implies that the neighborhood of $v$ in $J$ fully contains $X$.
Recall that $x = |X|$.
We have $|N_J(v) \cap X| = x$ and $|N_J(v) \cap D_1| = |N_G(v) \cap (X \cup V(\mathcal{D}))| - x$. Using the identity $\binom{A}{2} + \binom{B}{2} = \binom{A+B}{2} - AB$, the left-hand side of \eqref{eq:case4_final_goal} becomes:
\begin{align*}
    \binom{|N_J(v) \cap D_1|}{2} + \binom{|N_J(v) \cap X|}{2} &= \binom{|N_G(v) \cap (X \cup V(\mathcal{D}))| - x}{2} + \binom{x}{2} \\
    &= \binom{|N_G(v) \cap (X \cup V(\mathcal{D}))|}{2} - x\left( |N_G(v) \cap (X \cup V(\mathcal{D}))| - x \right).
\end{align*}
On the other hand, by the definition of $t_{11}(v)$ and applying Claim \ref{nx} to bound the sum of common neighborhoods, we have
\begin{align*}
    t_{20}(v) + t_{11}(v) &\leq \binom{|N_G(v) \cap V(\mathcal{D})|}{2} + \sum_{u \in N_G(v) \cap X} |N_G(v) \cap N_G(u) \cap V(\mathcal{D})| \\
    &\le \binom{|N_G(v) \cap V(\mathcal{D})|}{2} + |N_G(v) \cap X|(\Lambda - \lceil \alpha\Lambda \rceil) + \Lambda.
\end{align*}

In the case $\Lambda = 2$, we have $x = \lceil 2\alpha\Lambda \rceil + 1 = 4$. Since $N_J(v) \cap D_1 \neq \emptyset$, the neighborhood shifting rule ensures that $|N_J(v) \cap X | = |X| = 4$. Thus, the left-hand side of the target inequality satisfies
\begin{align*}
    \binom{|N_J(v) \cap D_1|}{2} + \binom{|N_J(v) \cap X|}{2} \ge \binom{4}{2}= 6.
\end{align*}
On the other hand, for $\Lambda = 2$, we have $\Lambda - \lceil \alpha\Lambda \rceil = 2 - 2 = 0$. Consequently, the estimate for $G$ from Claim \ref{nx} simplifies to:
\begin{align*}
    t_{20}(v) + t_{11}(v) &\le \binom{|N_G(v) \cap V(\mathcal{D})|}{2} + (\Lambda - \lceil \alpha\Lambda \rceil) |N_G(v) \cap X| + \Lambda \\
    &\le \binom{2}{2} + 0 + 2 = 3.
\end{align*}
Comparing these bounds, we obtain $\binom{|N_J(v) \cap D_1|}{2} + \binom{|N_J(v) \cap X|}{2} \ge 6 > 3 \ge t_{20}(v) + t_{11}(v)$, confirming the
minimum triangle-degree condition.

In the case  $\Lambda \ge 3$, we have $x = \lceil 2\alpha\Lambda \rceil + 2$. Let $y = |N_G(v) \cap (X \cup V(\mathcal{D}))|$ and note that $y$ is constrained to the interval $\lceil 2\alpha\Lambda \rceil + 2 \le y \le \lceil 2\alpha\Lambda \rceil + \Lambda + 2$. Based on the construction of $J$, the number of triangles in $J$ formed within $D_1 \cup X$ is
\begin{align*}
    \binom{|N_J(v) \cap D_1|}{2} + \binom{|N_J(v) \cap X|}{2} &= \binom{y - x}{2} + \binom{x}{2} \\
    &= \binom{y}{2} - x(y - x).
\end{align*}
On the other hand, applying Claim \ref{nx}, we estimate the triangle degree of $v$ in $G$:
\begin{align*}
    t_{20}(v) + t_{11}(v) &\le \binom{|N_G(v) \cap V(\mathcal{D})|}{2} + (1 - \alpha)\Lambda |N_G(v) \cap X| + \Lambda \\
    &= \binom{|N_G(v) \cap V(\mathcal{D})|}{2} + (1 - \alpha)\Lambda (y - |N_G(v) \cap V(\mathcal{D})|) + \Lambda.
\end{align*}
Let $k = |N_G(v) \cap V(\mathcal{D})|$ and set $\Phi (k)=\binom{k}{2} + (1 - \alpha)\Lambda (y-k) + \Lambda$. Since $\Phi (k)$ is a convex function of $k$ over the interval $[y-x, \Lambda]$, its maximum is attained at the boundaries. Thus, to establish \eqref{eq:case4_final_goal}, it suffices to verify the inequality
\[ \binom{y}{2} - x(y - x) \ge \max \{ \Phi(y-x), \Phi(\Lambda) \} \]
for the following two boundary cases:

\smallskip
\noindent \textit{Boundary Case 1: $k = y - x$.} \\
A direct expansion of the difference yields:
\begin{align*}
    \left[ \binom{y}{2} - x(y - x) \right] - \left[ \binom{y - x}{2} + (1 - \alpha)\Lambda x + \Lambda \right] &= \binom{x}{2} - (1 - \alpha)\Lambda x - \Lambda \\
    &\ge \binom{2\alpha\Lambda + 2}{2} - (1 - \alpha)\Lambda (2\alpha\Lambda + 2) - \Lambda \\
    &= (4\alpha^2 - 2\alpha)\Lambda^2 + (5\alpha - 3)\Lambda + 1,
\end{align*}
which is non-negative for $\alpha = 4/7$ and $\Lambda \ge 3$.

\smallskip
\noindent \textit{Boundary Case 2: $k = \Lambda$.} \\
We define the difference as a quadratic function $\Psi(y)$:
\begin{align*}
    \Psi(y) &= \binom{y}{2} - x(y - x) - \left[ \binom{\Lambda}{2} + (1 - \alpha)\Lambda (y - \Lambda) + \Lambda \right] \\
    &\ge \frac{1}{2}y^2 - \left( (1 + \alpha)\Lambda + \frac{5}{2} \right)y + \left( 4\alpha^2 - \alpha + \frac{1}{2} \right)\Lambda^2 + \left( 8\alpha - \frac{1}{2} \right)\Lambda + 4.
\end{align*}
The minimum of this parabola is attained at $y^* = (1 + \alpha)\Lambda + \frac{5}{2}$. Completing the square shows:
\[ \Psi(y) \ge \Psi(y^*) = \left( \frac{7}{2}\alpha^2 - 2\alpha \right)\Lambda^2 + \left( \frac{11}{2}\alpha - 3 \right)\Lambda + \frac{7}{8}, \]
which is strictly positive for $\alpha = 4/7$ and $\Lambda \ge 3$.

\medskip
\noindent In all cases, every vertex $v \in V(J)$ is contained in at least $\binom{t}{2}$ triangles. Since $|E(J)| < |E(G)|$, this contradicts the extremality of $G$, thereby completing the proof of Lemma \ref{b}.

\end{proof}

\subsection{Lemma \ref{d1}}
\begingroup
\it
For any $1\le i\le \xi $, if $f_i(u_i) = 1$, then $b_i \le \frac{3}{4}t$.
\endgroup

\begin{proof}
As we did in Lemma \ref{b}, it suffices to establish the result for $i = 1$.
Suppose, for the sake of contradiction, that $\Lambda=f_1(u_1) = 1$ but $b_1 > \frac{3}{4}t$. In this case, each iteration of our procedure removes exactly one clique from the collection. Let $\mathcal{D} = \{D_1, \dots, D_d\}$, where $d = b_1 > \frac{3}{4}t$.

For each $1 \le i \le d$, let $D_i$ be the unique clique in  $\mathcal{D}_i \setminus \mathcal{D}_{i+1}$. Consider a sequence of graphs $L_0, L_1, \dots, L_d$, where $L_0 = G$ and, for each $i \ge 1$,
\[
L_i = \mathcal{G}(L_{i-1}, D_i, D_i).
\]
Since $\Lambda = 1$, each vertex $u \notin D_i$ is adjacent to at most one vertex in $D_i$. Thus, the transformation $L_i$ effectively redirects all edges between $V(G) \setminus D_i$ and $D_i$ to the first vertex of $D_i$. Let $L = L_d$ and by Lemma \ref{G}, every vertex in $L$ is contained in at least $\binom{t}{2}$ triangles.

For each $1 \le i \le d$, let $W_i = \{v \in V(G) \setminus D_i : N_L(v) \cap D_i \neq \emptyset\}$ and $e_i=|E(L[W_i])|$.
Since $D_i \in \mathcal{A}_1$, its total excess degree satisfies $\sum_{v \in D_i} (d_L(v) - t) \le \theta$, where $\theta = \frac{\sqrt{33}}{2\sqrt{928}}(t+1)$. Under the assumption $\Lambda=1$, each vertex $u \in W_i$ has exactly one neighbor in $D_i$. Consequently, $|W_i| = \sum_{v \in D_i} (d_L(v) - t) \le \theta$.

Furthermore, the condition $\Lambda=1$ implies that for each $1 \le i \le d$, each $D_i$ contains a unique vertex $z_i$ such that $d_L(z_i) \ge t+1$, while all other vertices in $D_i$ have degree $t$ in $L$. Thus, $N_L(u) \cap D_i = \{z_i\}$ for all $u \in W_i$. By our construction, the sets $W_1, \dots, W_d$ are pairwise disjoint. Since $L$ inherits the extremality of $G$, every edge of $L$ must be contained in a triangle. Any triangle containing an edge between $z_i$ and a vertex in $W_i$ must have its third vertex in $W_i$, implying that $|N_L(u) \cap W_i| \ge 1$ for every $u \in W_i$ and then $W_i$ is not an independent set. Consequently $|W_i| \ge 2$ for each $1 \le i \le d$.

\begin{claim} \label{w1}

There exist distinct indices $i, j \in \{1, \dots, d\}$ and a vertex $w \in W_i$ such that $|N_L(w) \cap W_j| \le 1$ and $e_i \le e_j$.
\end{claim}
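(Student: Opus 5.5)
The approach is a double count of the edges between different $W_i$'s, followed by an averaging argument. The context gives us $d>\frac34 t$ pairwise disjoint sets $W_1,\dots,W_d$, each of size between $2$ and $\theta=\frac{\sqrt{33}}{2\sqrt{928}}(t+1)\approx 0.094(t+1)$. I will order the indices so that $e_{\sigma(1)}\le e_{\sigma(2)}\le\cdots\le e_{\sigma(d)}$. It then suffices to find a vertex $w$ lying in a set $W_i$ that comes early in this order and has at most one neighbour in some later set $W_j$. In particular, I take $i=\sigma(1)$, the index minimising $e_i$. Then every $j\ne i$ satisfies $e_i\le e_j$, and the claim reduces to the following statement.
\[
\text{Some } w\in W_i \text{ and some } j\ne i \text{ satisfy } |N_L(w)\cap W_j|\le 1 .
\]

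Suppose this fails, so that every $w\in W_i$ has at least two neighbours in every $W_j$, $j\ne i$. First I will use the structure of $L$: the vertex $w$ has degree bounded in terms of $t$ plus its excess. More precisely, I will show that $w\in B_1$ has degree at most about $t+O(\theta)$. I would get this from Lemma~\ref{d} applied to $L$, which is still extremal since $|E(L)|=|E(G)|$. Then $\deg_L(w)-t\le \frac14(t+1)^2$. That bound is too weak on its own, so instead I count neighbours directly. Under our assumption, $w$ has at least $2(d-1)>\frac32 t-2$ neighbours in $\bigcup_{j\ne i}W_j$, and also the neighbour $z_i$. This forces $\deg_L(w)-t\ge \frac12 t-3$. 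The same holds for every $w\in W_i$, and there are at least $2$ such vertices. Summing over the sets, I next count the edges between $W_j$ and $W_i$ from the other side. Since $|W_j|\le\theta$, each $W_j$ receives at least $2|W_i|$ such edge endpoints. So the total excess $\sum_{v}(\deg_L(v)-t)$ over $W_i$ alone is at least $|W_i|(\frac12 t-3)\ge t-6$.

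The remaining step, which I expect to be the main obstacle, is turning this excess into a contradiction. One route is to sum over all choices of $i$ rather than only the minimiser. Under the negation, I will argue that the failure propagates: for the minimiser $i$, all of $W_i$ is densely joined to every other $W_j$. Those vertices in $W_j$ then also have large degree. In total this accumulates excess of order $\sum_j |W_j|\cdot t$, or at least of order $d\cdot t\approx\frac34 t^2$. That exceeds the bound $\frac14(t+1)^2$ of Lemma~\ref{d} (transferred to $L$, noting that degrees in $L$ relate to those in $G$ only via the shifting, so I would apply Lemma~\ref{d} to $L$ directly as an extremal graph).

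If the excess counting on $W_i$ alone is insufficient, the fallback is an edge-reduction argument in the spirit of Claim~\ref{m}. Delete an edge $wz_i$ and check, as in Case~3 of Claim~\ref{m}, that the many common neighbours in the other $W_j$'s compensate for the loss of triangles. This would contradict extremality.
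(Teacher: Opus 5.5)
Your proposal has a genuine gap, and it comes from your very first reduction. By fixing $i$ as the minimiser of $e_i$, you replace the claim with a stronger statement. Its negation constrains only the vertices of a single set $W_i$: each has at least two neighbours in every other $W_j$. As you compute yourself, this guarantees only excess $|W_i|(2d-t)$. For $|W_i|=2$ and $d$ close to $\frac34 t$ that is about $t$, nowhere near $\frac14(t+1)^2$, so Lemma~\ref{d} gives no contradiction.

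The proposed propagation does not repair this. For $v\in W_j$ you only know $\deg_L(v)\ge t$. You can bound $\deg_L(v)-t$ below by counting specific neighbours only once more than $t$ of them are identified. The edges arriving from $W_i$ (already counted at their $W_i$ ends) give nothing of that kind, so the asserted excess of order $d\cdot t$ is unsupported.

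The fallback fails as well. Deleting $wz_i$ can only destroy triangles, namely those $\{w,z_i,v\}$ with $v\in N_L(w)\cap W_i$, lost both at $w$ and at each such $v$. So the deletion is admissible only if all these vertices already have slack above $\binom{t}{2}$. Having many neighbours of $w$ in other sets $W_j$ provides no such slack, since nothing forces those neighbours to be adjacent to one another or to other neighbours of $w$. It also says nothing about the vertices $v$.

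The missing idea is to negate the claim as stated. Order the sets so that $e_1\le\cdots\le e_d$. The negation then says that for \emph{every} pair $i<j$, each $u\in W_i$ has at least two neighbours in $W_j$; in particular $E_L(W_j,W_i)\ge 2|W_j|\ge 4$ for $j<i$. So each $u\in W_i$ has at least $2+2(d-i)$ identified neighbours: $z_i$, one in $W_i$, and two in each later set. This is at least $t$ exactly when $i\le M:=d-\lceil t/2\rceil+1$.

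Now sum $\deg_L(u)-t$ over $u\in W_i$ for $i\le M$, using $|W_i|\ge 2$. Each index contributes at least $2(2d-2i+2-t)+4(i-1)=4d-2t\ge t+1$. Also $M\ge (t+3)/4$, because $4d\ge 3t+1$. The total excess in $L$ is therefore at least $(t+1)(t+3)/4>\frac14(t+1)^2$, contradicting Lemma~\ref{d} for the extremal graph $L$.

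This is the paper's argument. The information about all pairs is essential, since only the first $M$ sets clear the $-t$ term, and that is exactly what your minimiser-only reduction throws away.
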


\begin{proof}
Suppose, for the sake of contradiction, that for all $i, j\in \{1, \dots, d\}$ with $e_i \le e_j$, every vertex $w \in W_i$ satisfies $|N_L(w) \cap W_j| \ge 2$. Without loss of generality, assume the cliques are ordered such that $e_1 \le e_2 \le \dots \le e_d$. Under this assumption, for any $1 \le i < j \le d$ and any $w \in W_i$, we have $|N_L(w) \cap W_j| \ge 2$, which implies $E_L(W_i, W_j) = \sum_{w \in W_i} |N_L(w) \cap W_j| \ge 2|W_i|$.

Recall that for each $u \in W_i$ and any $1\le i\le d$, $|N_L(u) \cap D_i| = 1$ and $|N_L(u) \cap W_i| \ge 1$. Summing over the first $M := d - \lceil t/2 \rceil + 1$ sets $W_i$, we obtain
\begin{align*}
    \sum_{v \in V(G)} (d_L(v) - t) &\ge \sum_{i=1}^{M} \sum_{u \in W_i} (d_L(u) - t) \\
    &\ge \sum_{i=1}^{M} \sum_{u \in W_i} \left( |N_L(u) \cap D_i| + |N_L(u) \cap W_i| + \sum_{j=1}^{i-1} |N_L(u) \cap W_j| +\sum_{j= i+1}^d |N_L(u) \cap W_j|- t \right) \\
    &\ge \sum_{i=1}^{M} \left( |W_i| (2 + 2(d-i) - t) + \sum_{j=1}^{i-1} E_L(W_i, W_j) \right).
\end{align*}
Note that for all $i \le M$, the choice of $M = d - \lceil t/2 \rceil + 1$ ensures that $2(d-i) + 2 - t \ge 0$. Substituting the bounds $|W_i| \ge 2$ and $E_L(W_i, W_j) \ge 2|W_j| \ge 4$ into the summation, the $i$-th term is at least
\[
2(2d - 2i + 2 - t) + 4(i-1) = 4d - 2t.
\]
Summing over $i$ from $1$ to $M$ yields
\[
\sum_{v \in V(G)} (d_L(v) - t) \ge M(4d - 2t) = (d - \lceil t/2 \rceil + 1)(4d - 2t).
\]
Given $d > \frac{3}{4}t$, a straightforward calculation shows that this expression strictly exceeds $\frac{1}{4}(t+1)^2$, a contradiction with Lemma \ref{d}.
\end{proof}

Without loss of generality, assume the condition in Claim \ref{w1} holds for $w_1 \in W_1$ and $W_d$ with $e_1 \le e_d$.  Let $W_1 = \{w_1, \dots, w_a\}$.
%Since $D_1 \in \mathcal{A}_1$, the definition of $\mathcal{A}_1$ combined with the lower bound on $d$ implies
%a < %\frac{\sqrt{33}}{2\sqrt{928}}(t+1) < \frac{3}{4}t \le d.$
We obtain $L'$ from $L$ by executing the following edge modifications:

\begin{itemize}
    \item \textbf{Deletions:}
    \begin{enumerate}
        \item Remove the edge $w_1 z_1$.
        \item For each $j \in \{2, \dots, d\}$, remove all edges between the clique $D_j$ and $V(G) \setminus D_j$ (particularly the edges $\{u z_j : u \in W_j\}$).
    \end{enumerate}

    \item \textbf{Additions:}
    \begin{enumerate}
        \item Add all edges between $w_1$ and $W_d$ that are not already present in $L$.
        \item For each $i \in \{2, \dots, a\}$, if $w_i$ has no neighbors in $W_i$ in the graph $L$, add all edges $\{u w_i : u \in W_i\}$. Otherwise, add all edges $\{u z_1 : u \in W_i\}$.
        \item For each $i \in \{a+1, \dots, d-1\}$, add all edges $\{u z_1 : u \in W_i\}$.
    \end{enumerate}
\end{itemize}

By construction, all adjacencies not explicitly modified above remain unchanged.
We now evaluate the total change in number of the edges by considering three contributions.
First, for each $i \in \{2, \dots, d-1\}$, the removal of edges between $z_i$ and $W_i$ is precisely balanced by the addition of edges between $W_i$ and either $w_i$ or $z_1$, yielding no net change.
Second, consider the adjacencies of $W_d$. In $L'$, we isolate the clique $D_d$ by removing all edges $\{u z_d : u \in W_d\}$, but we simultaneously add all edges between $w_1$ and $W_d$ that were not already present in $L$. The number of edges added is $|W_d| - |N_L(w_1) \cap W_d|$, while $|W_d|$ edges are removed.
Finally, we account for the edge $w_1 z_1$, which is removed and not replaced. Summing these contributions, the total change in the number of edges is
\begin{align*}
|E(L')| - |E(L)| &= \underbrace{(|W_d| - |N_L(w_1) \cap W_d|) - |W_d|}_{\text{from } W_d \text{ and } w_1} - \underbrace{1}_{\text{from } w_1 z_1} \\
&= -1 - |N_L(w_1) \cap W_d|.
\end{align*}
Since $|N_L(w_1) \cap W_d| \ge 0$, we have $|E(L')| \le |E(L)| - 1$. To show that this reduction in the number of edges contradicts the extremality of $L$, it suffices to verify that $L'$ remains a valid candidate for the extremal problem. The following claim ensures that $L'$ still satisfies the required structural constraints.

\begin{claim}\label{l'}
    Every vertex $v\in V(G)$ is contained in at least $\binom{t}{2}$ triangles in $L'$.
\end{claim}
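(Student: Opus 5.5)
The plan is to check the triangle condition vertex by vertex in $L'$. Every modification is local: the deleted edges are $w_1z_1$ and all edges between $D_j$ and $V(G)\setminus D_j$ for $j\ge 2$. For each vertex $v$ we identify the triangles of $L$ through $v$ that use a deleted edge, and exhibit at least as many new triangles through $v$ in $L'$. Throughout I will use the structure established above for $\Lambda=1$:
\begin{itemize}
\item the only vertex of $D_i$ with outside neighbours is $z_i$, and $N_L(z_i)\setminus D_i=W_i$;
\item the sets $W_1,\dots,W_d$ are pairwise disjoint;
\item every $u\in W_i$ has $N_L(u)\cap D_i=\{z_i\}$ and no neighbour in any $D_j$ with $j\ne i$.
\end{itemize}

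\textbf{Easy vertices.} Vertices of $D_1,\dots,D_d$ still lie in a $K_{t+1}$, so they have $\binom{t}{2}$ triangles. Now take $v\notin V(\mathcal{D})\cup\bigcup_i W_i$. Then $v$ is adjacent to no $z_j$, so no triangle through $v$ uses a deleted edge. Since edges are only added among the remaining pairs, the triangle count of $v$ does not decrease.

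\textbf{Vertices of $W_i$ with $2\le i\le d-1$.} For $u\in W_i$, the only lost triangles are $\{u,z_i,u'\}$ with $u'\in N_L(u)\cap W_i$; no triangle uses $z_i$ together with a vertex outside $D_i\cup W_i$. In $L'$, the vertex $c=w_i$ or $c=z_1$ (according to the addition rule) is joined to all of $W_i$. So for each such $u'$, $\{u,c,u'\}$ is a triangle. These triangles are new, because in $L$ no vertex of $W_i$ was adjacent to $z_1$ (the $W$'s are disjoint). In the $w_i$ option they are new because $w_i$ had no neighbour in $W_i$. The same argument handles $u\in W_d$, with $c=w_1$: since $|N_L(w_1)\cap W_d|\le 1$, a triangle $\{u,w_1,u'\}$ with $uu'\in E(L[W_d])$ cannot have existed in $L$, so it is genuinely new.

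\textbf{Vertices of $W_1$.} The vertices $u\in W_1\setminus\{w_1\}$ lose at most the triangles $\{u,z_1,w_1\}$. I expect this to be compensated, since $w_1$'s side gains. For $u$ itself, I will argue that $u$ gains triangles via the new adjacencies of $z_1$, or of $w_i$ when $u=w_i$. I would check this carefully: if $u=w_i$ gained all of $W_i$, then $u$ gains $e_i\ge 1$ triangles. Otherwise $z_1$ gained $W_i\neq\emptyset$, but $u$ is not adjacent to $W_i$ in general. This is the step I expect to be the main obstacle, namely showing that the loss of $\{u,z_1,w_1\}$ for $u\in N_L(w_1)\cap W_1$ is covered. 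My first attempt will use the addition rule for $w_i$ (indices matched to $W_1=\{w_1,\dots,w_a\}$, which is why $a\le\theta<d$ is needed) to show that each $w_i$ gains at least one triangle. If that fails, I will use the fact that $z_1$ now has many neighbours $\bigcup_{i}W_i$, together with edges between $W_1$ and those sets.

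\textbf{The vertex $w_1$.} The lost triangles are exactly $\{w_1,z_1,y\}$ with $y\in N_L(w_1)\cap W_1$, so at most $\deg_{L[W_1]}(w_1)\le e_1$ of them. The gained triangles are $\{w_1,y,y'\}$ for each edge $yy'$ of $L[W_d]$, all new since $|N_L(w_1)\cap W_d|\le 1$. This gives at least $e_d\ge e_1$ new triangles, by the choice in Claim~\ref{w1}. Hence $w_1$ keeps at least $\binom{t}{2}$ triangles, and combining all cases proves Claim~\ref{l'}.
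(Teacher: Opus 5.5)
Your case split (vertices of $V(\mathcal{D})$, vertices outside $V(\mathcal{D})\cup W$, vertices of $W_i$ for $2\le i\le d$, and $w_1$) is the same as the paper's, and you argue those cases correctly. The gap is the case $u=w_i\in W_1\setminus\{w_1\}$ with $2\le i\le a$, which you leave open.

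There, your remark that ``$u$ is not adjacent to $W_i$ in general'' misreads the addition rule. The construction uses the $z_1$-branch \emph{precisely} when $w_i$ has a neighbour in $W_i$ in $L$. So pick $y\in N_L(w_i)\cap W_i$:
\begin{itemize}
\item the edge $w_iz_1$ survives, since the only deleted edge at $z_1$ leaving $D_1$ is $w_1z_1$;
\item the edge $w_iy$ survives, since it lies outside $V(\mathcal{D})$;
\item the edge $yz_1$ is added.
\end{itemize}
Because $y\notin W_1=N_L(z_1)\setminus D_1$, the triangle $\{w_i,z_1,y\}$ is new in $L'$. It offsets the single lost triangle $\{w_i,w_1,z_1\}$. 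In the other branch, your observation already suffices: $w_i$ gains the $e_i\ge 1$ new triangles $\{w_i,y,y'\}$ with $yy'\in E(L[W_i])$.

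Your heuristic that the loss is ``compensated, since $w_1$'s side gains'' is not a valid argument. Triangles gained at $w_1$ do not count toward the triangle degree of $w_i$; each vertex must be compensated by new triangles through itself. This is exactly why the branch condition in the construction matters. Your fallback idea of using $z_1$'s new neighbours likewise needs an edge from $w_i$ into some $W_j$. The branch condition supplies that edge, with $j=i$, and you did not extract it.
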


\begin{proof}
Let $W = \bigcup_{i=1}^d W_i$. By the construction, every vertex $u \in V(\mathcal{D})$ is contained in a $(t+1)$-clique in $L'$, thereby satisfying  the minimum triangle-degree condition. It remains to verify the bound for $u \in V(G) \setminus V(\mathcal{D})$.

For $u \in V(G) \setminus V(\mathcal{D})$, let $s_{ij}(u)$ and $s'_{ij}(u)$ denote the number of triangles containing $u$ in $L$ and $L'$, respectively, having exactly $i$ vertices in $V(\mathcal{D})$ and $j$ vertices in $W \setminus \{u\}$. The condition $\Lambda=1$ implies that no vertex has two neighbors in $V(\mathcal{D})$ which implies $s_{20}(u) = 0$.   Moreover, as $W = N_L(V(\mathcal{D})) \setminus V(\mathcal{D})$, any neighbor of $V(\mathcal{D})$ outside $V(\mathcal{D})$ must lie in $W$, which implies $s_{10}(u) = 0$.

\noindent \textbf{General Monotonicity Observation.}
The transformation $L \to L'$ only removes edges incident to $V(\mathcal{D})$ (specifically $w_1 z_1$ and edges $\{z_j v : v \in W_j\}$ for $j \ge 2$), while all adjacencies within $V(G) \setminus V(\mathcal{D})$ are preserved or added. Consequently, $L[V(G) \setminus V(\mathcal{D})]$ is a spanning subgraph of $L'[V(G) \setminus V(\mathcal{D})]$, which immediately yields:
\begin{equation*} \label{eq:base-mono}
s'_{02}(u) \ge s_{02}(u), \quad s'_{01}(u) \ge s_{01}(u), \quad \text{and} \quad s'_{00}(u) \ge s_{00}(u).
\end{equation*}
In view of these relations, the extremality of $L$ ensures the required bound for $u$ in $L'$ provided that
\begin{equation} \label{eq:triangle-sum}
s'_{11}(u) + s'_{02}(u) \ge s_{11}(u) + s_{02}(u).
\end{equation}
In the following analysis, we apply  $s'_{02}(u) \ge s_{02}(u)$ as a default. We shall provide a refined accounting of the difference $s'_{02}(u) - s_{02}(u)$ only in those instances where a potential loss in $s_{11}(u)$ must be specifically compensated.

We verify \eqref{eq:triangle-sum} by considering the following four cases.

\medskip
\noindent \textbf{Case 1:} $u \in V(G) \setminus (V(\mathcal{D}) \cup W)$. \\
By the definition of $W$, the neighborhood of $u$ in $L$ satisfies $N_L(u) \subseteq V(G) \setminus V(\mathcal{D})$. Consequently, every triangle containing $u$ in $L$ lies entirely within $V(G) \setminus V(\mathcal{D})$. Since $E_L(V(G) \setminus V(\mathcal{D})) \subseteq E_{L'}(V(G) \setminus V(\mathcal{D}))$ by the construction, all such triangles are preserved in $L'$, and the required bound for $u$ follows immediately.

\medskip
\noindent \textbf{Case 2:} $u \in W_i$ for some $2 \le i \le d$. \\
In $L$, the vertex $u$ has a unique neighbor $z_i \in V(\mathcal{D})$. By the construction of $L'$, this adjacency is redirected to either $z_1$ or $w_i$ ($w_1$ if $i = d$).

First, suppose $u z_1 \in E(L')$. In this case, $z_1$ inherits the structural role of $z_i$, and the triangles containing $\{u, z_1\}$ are newly formed in $L'$ as a result of our construction. Since $N_{L'}(u) \setminus V(\mathcal{D}) \supseteq N_L(u) \setminus V(\mathcal{D})$ and $N_{L'}(z_1) \cap W \supseteq N_L(z_i) \cap W$, any triangle in $L$ containing $\{u, z_i\}$ corresponds to a triangle in $L'$ containing $\{u, z_1\}$. Consequently, we have
\begin{align*}
    s'_{11}(u) + s'_{02}(u)
    &= |N_{L'}(u) \cap N_{L'}(z_1) \cap W| + s'_{02}(u)  \\
    &\ge |N_{L}(u) \cap N_{L}(z_i) \cap W| + s_{02}(u) \\
    &= s_{11}(u) + s_{02}(u).
\end{align*}

Assume $u z_1 \notin E(L')$. Then $u$ is adjacent to $w_i$ (or $w_1$ if $i = d$). In this scenario, $u$ has no neighbor in $V(\mathcal{D})$ (that is, $s'_{11}(u) = 0$), and the structural role of  $z_i$ is effectively inherited  by a representative in $W_1$. Specifically, for $i=d$, any triangle $\{u, z_d, v\}$ in $L$ must satisfy $v \in W_d$ since $\Lambda=1$. The condition $|N_L(w_1) \cap W_d| \le 1$ ensures that the triangles $\{u, w_1, v\}$ in $L'$ are newly formed and mirror the original $\{u, z_d, v\}$. We thus obtain
\begin{align*}
    s'_{11}(u) + s'_{02}(u)
    = s'_{02}(u) &\ge  |N_{L'}(u) \cap N_{L'}(w^*) \cap W| + s_{02}(u)  \\
    &\ge |N_{L}(u) \cap N_{L}(z_i) \cap W| + s_{02}(u) \\
    &= s_{11}(u) + s_{02}(u),
\end{align*}
where $w^*$ is $w_i$ if $i < d$ and $w_1$ if $i = d$. In all instances, the inequality \eqref{eq:triangle-sum} is satisfied.

\medskip
\noindent \textbf{Case 3:} $u = w_i \in W_1 \setminus \{w_1\}$ for some $2 \le i \le a$. \\
In this case, the only loss of triangles in $L'$ arises from the removal of the edge $w_1 z_1$, which destroys the triangle $\{u, w_1, z_1\}$ (a triangle of type $s_{11}$). Thus, $s'_{11}(u) \ge s_{11}(u) - 1$.  We show that the loss is compensated by new adjacencies.

First, suppose $N_L(u) \cap W_i = \emptyset$. By our construction, all vertices of $W_i$ are made adjacent to $u$ in $L'$, and thus $W_i \subseteq N_{L'}(u)$. Since $W_i$ is not an independent set in $L$, there exists at least one edge in $L[W_i]$, which now forms a new triangle containing $u$ of type $s'_{02}$. Consequently, $s'_{02}(u) \ge s_{02}(u) + 1$, which yields
\begin{align*}
    s'_{11}(u) + s'_{02}(u)
    \ge s_{11}(u) + s_{02}(u).
\end{align*}

Assume $N_L(u) \cap W_i \neq \emptyset$. Under this condition, $u$ retains its neighbor(s) in $W_i$ which, in $L'$, are newly joined to $z_1$. This creates $|N_L(u) \cap W_i| \ge 1$ new triangles of type $s'_{11}$ as $uz_1 \in E(L')$. Therefore, the loss of $\{u, w_1, z_1\}$ is offset by these new adjacencies, and we have
\begin{align*}
    s'_{11}(u) + s'_{02}(u)
    \ge
     s_{11}(u) + s_{02}(u).
\end{align*}
In both instances, the inequality \eqref{eq:triangle-sum} holds.

\medskip
\noindent \textbf{Case 4:} $u = w_1$. \\
The construction of $L'$ involves the removal of the edge $u z_1$, which implies $s'_{11}(u) = 0$. However, this loss is more than compensated by the newly formed adjacencies between $u$ and $W_d$. Recall that $W_d \subseteq N_{L'}(u)$, and $|N_L(u) \cap W_d| \le 1$. Thus, every edge in the induced subgraph $L[W_d]$ forms a new triangle containing $u$ in $L'$, all of which are of type $s'_{02}$. It follows that $s'_{02}(u) \ge s_{02}(u) + e_d$.

In the  graph $L$, the triangles of type $s_{11}$ containing $u = w_1$ are formed by $z_1$ and its neighbors in $W_1 \setminus \{w_1\}$. Consequently, $s_{11}(u) = |N_L(u) \cap (W_1 \setminus \{w_1\})|$, which is bounded above by the total number of edges in $L[W_1]$, i.e., $s_{11}(u) \le e_1$. Using the condition $e_1 \le e_d$, we obtain
\begin{align*}
    s'_{11}(u) + s'_{02}(u)
    = s'_{02}(u) &\ge s_{02}(u) + e_d \\
    &\ge s_{02}(u) + e_1 \\
    &\ge s_{02}(u) + s_{11}(u).
\end{align*}
Thus, the inequality \eqref{eq:triangle-sum} holds for $u = w_1$. Since all cases for $v \in V(G)$ have been verified, $L'$ is a valid candidate for the extremal problem with $|E(L')| < |E(L)|$, a contradiction.
\end{proof}
This completes the proof of Lemma \ref{d1}.
\end{proof}

\subsection{Lemma \ref{id}}
\begingroup
\it
 Let $\theta = \frac{\sqrt{33}}{2\sqrt{928}}(t+1)$. For any vertex $u \in B_1$, let $k(u)=|\{D\in \mathcal{D}:N_G(u) \cap D \neq \emptyset\}|$.
    \begin{enumerate}
        \item[(i)] If $|N_G(u) \cap V(\mathcal{D})| \ge \sqrt{2\theta} + 3$, then $k(u) \le 3$.
        \item[(ii)] If $|N_G(u) \cap V(\mathcal{D})| \ge \sqrt{3\theta} + 5$, then $k(u) \le 2$.
\end{enumerate}
\endgroup

\begin{proof}
Suppose, for the sake of contradiction, that there exists a vertex $u \in B_1$ such that $k(u) \ge 4$ when $|N_G(u) \cap V(\mathcal{D})| \ge \sqrt{2\theta} + 3$, or $k(u) \ge 3$ when $|N_G(u) \cap V(\mathcal{D})| \ge \sqrt{3\theta} + 5$.

\medskip
Let $G'$ be the graph obtained from $G_d$ by deleting the edge $e = uv_{f_1(u)}$. By the extremality of $G_d$, there must exist a vertex $v \in V(G')$ that is contained in fewer than $\binom{t}{2}$ triangles in $G'$. Let $t_j(v)$ and $t_j'(v)$ denote the number of triangles containing $v$ that intersect $V(\mathcal{D})$ in exactly $j$ vertices in $G$ and $G'$, respectively. Clearly, $t_0'(v) = t_0(v)$.
Since   $e = uv_{f_1(u)}$ is the unique edge in $E(G_d) \setminus E(G')$, the  triangle degree of $v$ decreases only if $v \in \{u, v_{f_1(u)}\} \cup (N(u) \cap N(v_{f_1(u)}))$.

\medskip
We first observe that if $v \in V(\mathcal{D})$, then $v$ must belong to some $D\in \mathcal{D}$. By the construction of $G_d$ and subsequently
$G'$, each $D_i$ induces a $K_{t+1}$ in $G'$. Thus, any $v \in V(\mathcal{D})$ is contained in at least $\binom{t}{2}$ triangles in $G'$, which contradicts our choice of $v$. It follows that $v \in B_1\setminus V(\mathcal{D})$.  Let $\mu_i = |N_G(v) \cap D_i|$ for $1\le i\le d$ and $\mu = |N_G(v) \cap V(\mathcal{D})|$. We now distinguish the following two cases.
\medskip

\noindent
\textbf{Case 1:} $v \neq u$.
Its neighborhood in $V(\mathcal{D})$ is identical in both $G_d$ and $G'$. By the construction of $G_d$, the neighbors of $v$ are concentrated in $D_1$, satisfying $|N_{G'}(v) \cap D_1| =|N_{G_d}(v) \cap D_1| = \sum_{i=1}^d |N_G(v) \cap D_i|$. Expanding the binomial for $t_2'(v)$, we have
\begin{align}
    t_2'(v) &= \binom{\sum_{i=1}^d \mu_i}{2} \notag \\
    &= \sum_{i=1}^d \binom{\mu_i}{2} + \sum_{1 \le i < j \le d} \mu_i \mu_j \notag \\
    &= t_2(v) + \sum_{1 \le i < j \le d} \mu_i \mu_j. \label{eq:t1_decomp0}
\end{align}

We now estimate $t_1'(v)$. By isolating the contribution of $u$ in the neighborhood of $v$, we obtain
\begin{align}
    t_1'(v) &= |N_{G'}(u) \cap N_{G'}(v) \cap D_1| + \sum_{w \in (N_{G'}(v) \cap B_1) \setminus \{u\}} |N_{G'}(w) \cap N_{G'}(v) \cap D_1| \notag \\
    &= |N_{G_d}(u) \cap N_{G_d}(v) \cap (D_1 \setminus \{v_{f_1(u)}\})| + \sum_{w \in (N_{G_d}(v) \cap B_1) \setminus \{u\}} |N_{G_d}(w) \cap N_{G_d}(v) \cap D_1| \notag \\
    &\geq |N_{G_d}(u) \cap N_{G_d}(v) \cap (D_1 \setminus \{v_{f_1(u)}\})| + t_1(v) - |N_G(u) \cap N_G(v) \cap V(\mathcal{D})|. \label{eq:t1_decomp}
\end{align}
The inequality \eqref{eq:t1_decomp} follows from the fact that the construction of $G_d$ preserves or increases the size of common neighborhoods within $D_1$. By the definition of $G_d$, the first term in \eqref{eq:t1_decomp} is at least $|N_G(u) \cap N_G(v) \cap V(\mathcal{D})| - 1$.

In the case of equality, the nested property of neighborhoods in $G_d$ implies $N_G(u) \cap V(\mathcal{D}) \subseteq N_G(v) \cap V(\mathcal{D})$. Since $k(u) \ge 3$, this containment forces $v$ to intersect at least three cliques, ensuring that the term $\sum_{i<j} \mu_i \mu_j$  in \eqref{eq:t1_decomp0} is at least $1$. This additional contribution effectively offsets the unit loss in $t_1'(v)$. It follows that
\[
t'_2(v) + t'_1(v) + t'_0(v) \ge t_2(v) + t_1(v) + t_0(v) \ge \binom{t}{2},
\]
which contradicts the assumed extremality of $G$.

\medskip
\noindent \textbf{Case 2:} $v = u$. \\
Let $\mu = |N_G(u) \cap V(\mathcal{D})|$ and recall that $\eta := k(u)$ is the number of cliques intersected with the neighborhood of $u$. Without loss of generality, assume $N_G(u) \cap D_i \neq \emptyset$ for $1 \le i \le \eta$, and let $\mu_i = |N_G(u) \cap D_i|$.

We first evaluate $t_2'(u)$. In $G'$, the neighborhood of $u$ in $D_1$ has cardinality $\mu - 1$. A standard binomial expansion yields
\begin{align*}
    t_2'(u) &= \binom{\mu - 1}{2} = \binom{\mu}{2} - (\mu - 1) \\
    &= \sum_{i=1}^\eta \binom{\mu_i}{2} + \sum_{1 \le i < j \le \eta} \mu_i \mu_j - \mu + 1 \\
    &= t_2(u) + \sum_{1 \le i < j \le \eta} \mu_i \mu_j - \mu + 1.
\end{align*}

Next, we analyze $t_1'(u)$. Let $R = \{w \in N_G(u) \cap B_1 : N_G(u) \cap V(\mathcal{D}) \subseteq N_G(w) \cap V(\mathcal{D})\}$. Partitioning the neighbors of $u$ in $B_1$ according to their membership in $R$, we have
\begin{align}
    t_1'(u) &= \sum_{w \in R} |N_{G'}(w) \cap N_{G'}(u) \cap D_1| + \sum_{w \in (N_{G'}(u) \cap B_1) \setminus R} |N_{G'}(w) \cap N_{G'}(u) \cap D_1| \notag \\
%    &= \sum_{w \in R} |N_{G_d}(w) \cap N_{G_d}(u) \cap (D_1 \setminus \{v_{f_1(u)}\})| + \sum_{w \in (N_{G_d}(u) \cap B_1) \setminus R} |N_{G_d}(w) \cap N_{G_d}(u) \cap D_1| \notag \\
    &\ge \sum_{w \in R} (|N_G(w) \cap N_G(u) \cap V(\mathcal{D})| - 1) + \sum_{w \in (N_G(u) \cap B_1) \setminus R} |N_G(w) \cap N_G(u) \cap V(\mathcal{D})| \label{eq:t1_ineq} \\
    &= t_1(u) - |R|. \notag
\end{align}
%The inequality \eqref{eq:t1_ineq} arises from two observations: first, for $w \in R$, the nested neighborhood property in $G_d$ ensures that the removal of $v_{f_1(u)}$ reduces the common neighborhood size by exactly one; second, for $w \notin R$, the shifting operation in $G_d$ ensures that the common neighborhood size remains at least its original value in $G$, and its cardinality is unaffected by the removal of $v_{f_1(u)}$ due to the index ordering in $D_1$.
The inequality \eqref{eq:t1_ineq} follows from the fact that for any $w \in B_1$,
\begin{equation}\label{eq:w_ineq}
    |N_{G'}(w) \cap N_{G'}(u) \cap D_1| \ge |N_{G_d}(w) \cap N_{G_d}(u) \cap D_1| - 1 \ge |N_G(w) \cap N_G(u) \cap V(\mathcal{D})| - 1,
\end{equation}
where both inequalities in \eqref{eq:w_ineq} hold with equality if and only if $w\in R$.

Combining the previous estimates and noting $t_0'(u) = t_0(u)$, the total triangle degree for $u$ in $G'$ satisfies
\begin{equation*}
    \sum_{j=0}^2 t_j'(u) \ge \sum_{j=0}^2 t_j(u) + \sum_{1 \le i < j \le \eta} \mu_i \mu_j - \mu - |R| + 1.
\end{equation*}
By the extremality of $G$, we have $\sum t_j(u) \ge \binom{t}{2}$. Conversely, our assumption that $G'$ is not a valid candidate requires $\sum t_j'(u) < \binom{t}{2}$. This yields the following crucial lower bound for $|R|$:
\begin{equation} \label{eq:R_bound}
    |R| > \sum_{1 \le i < j \le \eta} \mu_i \mu_j - \mu + 1.
\end{equation}
The quadratic form $\sum_{i<j} \mu_i \mu_j$ subject to $\sum \mu_i = \mu$ and $\mu_i \ge 1$ is minimized when $\eta-1$ variables are set to $1$. Thus, we have the lower bound
\begin{equation*}
    \sum_{1 \le i < j \le \eta} \mu_i \mu_j \ge (\eta-1)(\mu - \eta + 1) + \binom{\eta-1}{2}.
\end{equation*}
Substituting this into \eqref{eq:R_bound}, a straightforward calculation shows that
\begin{equation} \label{eq:R_final}
    |R| > -\frac{1}{2}\eta^2 + \left(\mu + \frac{1}{2}\right)\eta - 2\mu + 1.
\end{equation}
By the definition of $R$, every vertex $w \in R$ satisfies $N_G(u) \cap V(\mathcal{D}) \subseteq N_G(w) \cap V(\mathcal{D})$, meaning each $w$ is adjacent to all $\mu$ neighbors of $u$ in $V(\mathcal{D})$. Consequently, the number of edges between $R$ and $V(\mathcal{D})$ is at least $\mu |R|$. Applying the bound from \eqref{eq:R_final}, we conclude that the number of edges between $B_1$ and $\bigcup_{i=1}^\eta D_i$ is at least
\begin{align*}
    \mu |R| >  \mu \left( -\frac{1}{2}\eta^2 + \left(\mu + \frac{1}{2}\right)\eta - 2\mu + 1 \right).
\end{align*}

By the definition of $\mathcal{A}_1$, the total excess degree of the $\eta$ cliques $D_1, \dots, D_\eta$ satisfies $\sum_{i=1}^\eta \sum_{x \in D_i} (d(x) - t) < \eta \theta$, where $\theta = \frac{\sqrt{33}}{2\sqrt{928}}(t+1)$. Since each edge between $B_1$ and $\bigcup_{i=1}^\eta D_i$ contributes to this sum, we have the inequality $\eta \theta > \mu |R|$. Substituting the lower bound for $|R|$ from \eqref{eq:R_final} and dividing by $\mu$, we obtain:
\begin{align}
    \frac{\eta \theta}{\mu} &> -\frac{1}{2}\eta^2 + \left(\mu + \frac{1}{2}\right)\eta - (2\mu - 1) \notag \\
    \implies \theta &> \left( \mu + \frac{1}{2} - \frac{1}{2}\left( \eta + \frac{4\mu - 2}{\eta} \right) \right) \mu. \label{eq:theta_master}
\end{align}
Let $g(\eta) = \eta + \frac{4\mu - 2}{\eta}$. Since $g(\eta)$ is convex, its maximum on any interval is attained at the boundaries.

First, suppose $\mu \ge \sqrt{3\theta} + 5$ and $\eta \ge 3$. Noting that $\eta \le \mu$, the right-hand side of \eqref{eq:theta_master} is minimized at the boundary $\eta = 3$. A simple calculation then yields:
\[
\theta > \left( \mu + \frac{1}{2} - \frac{1}{2}\left( 3 + \frac{4\mu - 2}{3} \right) \right) \mu = \frac{\mu^2 - 2\mu}{3}.
\]
For $\mu \ge \sqrt{3\theta} + 5$, it is easy to verify that $\frac{1}{3}(\mu^2 - 2\mu) > \theta$, which is a contradiction.

Similarly, if $\mu \ge \sqrt{2\theta} + 3$ and $\eta \ge 4$, we again observe that $\eta \le \mu$. The maximum of  the function $g(\eta) = \eta + \frac{4\mu - 2}{\eta}$  is attained at $\eta = \mu$. Substituting this boundary value into \eqref{eq:theta_master} gives:
\[
\theta > \left( \mu + \frac{1}{2} - \frac{1}{2}\left( \mu + \frac{4\mu - 2}{\mu} \right) \right) \mu = \frac{\mu^2 - 3\mu + 2}{2}.
\]
Given $\mu \ge \sqrt{2\theta} + 3$, the inequality $\frac{1}{2}(\mu^2 - 3\mu + 2) > \theta$ holds, yielding a contradiction and completing the proof of Lemma \ref{id}.
\end{proof}

\section{Conclusion}

In Theorem \ref{3}, we proved that every extremal graph for Problem \ref{13} with $k=3$ contains an isolated copy of $K_{t+1}$ whenever $n \geq ct^2 + o(t^2)$, where
$c = 1+\sqrt{928/33}$. It is natural to ask whether this structural property holds under
a linear bound on $n$
 in terms of $t$, which leads to the following conjecture.

\begin{conjecture}\label{conject}
There exists an absolute constant \(C\) such that, for every
integer \(t\ge2\) and every \(n\ge Ct\), every extremal graph for Problem~\ref{13}
with \(k=3\) contains an isolated copy of \(K_{t+1}\).
\end{conjecture}

One may also ask whether Conjecture \ref{conject} extends to non-integer $t$. The answer
is negative. Specifically, suppose $\lceil t \rceil$ is an even integer such that
$\lceil t \rceil + 2$ divides $n$. Let $G$ be the disjoint union of several copies of
$K_{\lceil t \rceil+2} \setminus M$, where $M$ is a perfect matching of size
$\frac{\lceil t \rceil+2}{2}$. In this construction, $G$ is a $\lceil t \rceil$-regular
graph where each vertex is contained in exactly $\binom{\lceil t \rceil}{2} -
\frac{\lceil t \rceil}{2}$ triangles. If $\binom{t}{2} \leq \binom{\lceil t \rceil}{2} -
\frac{\lceil t \rceil}{2}$, then $G$ serves as an extremal graph with no isolated copy of
$K_{\lceil t \rceil+1}$, providing a counterexample. The existence of this counterexample leads us to
the following problem.

\begin{problem}
For real $t \geq 2$, does there exist an absolute constant \(C\) such that,
whenever \(n\ge Ct\), every extremal
graph for Problem \ref{13} with $k=3$ contains  a connected component of size $\lceil t \rceil + 1$ or
$\lceil t \rceil + 2$?
\end{problem}

\section*{Acknowledgement}

The authors would like to express their sincere gratitude to Yi Zhao from Georgia State University for introducing this problem to us and for his insightful suggestions.


\begin{thebibliography}{99}



\bibitem{Ch}Z. Chase, The maximum number of triangles in a graph of given maximum degree, \newblock{\em Advances in Combinatorics (Online)} 10 (2020) 5pp.



\bibitem{Erdos} P. Erd\H{o}s, Some theorems on graphs, \newblock{\em Riveon Lematematika}, 9 (1955), 13--17.


\bibitem{Fra2}
P. Frankl, Erd\H{o}s-Ko-Rado theorem with conditions on the maximal degree, \newblock {\em J. Combin. Theory Ser. A} 46 (1987), 252--263.

\bibitem{Fra3}
P. Frankl, J. Han, H. Huang and Y. Zhao, A degree version of the Hilton-Milner theorem, \newblock {\em J. Combin. Theory Ser. A} 155 (2018), 493--502.


\bibitem{Fra4}
 P. Frankl and N. Tokushige, A note on Huang-Zhao theorem on intersecting families with large minimum degree, \newblock {\em Discrete Math.} 340 (2017), 1098--1103.


\bibitem{Fu2}
Z. F\"uredi and Y. Zhao, Shadows of 3-Uniform Hypergraphs under a Minimum Degree Condition, \newblock {\em SIAM J. Discrete Math.} 36 (2022), 2523--2533.

\bibitem{Han} H. H\`{a}n, Y. Person and M. Schacht, On perfect matchings in uniform hypergraphs with large minimum vertex degree, \newblock {\em SIAM J. Discrete Math.} 23 (2009), 732--748.


\bibitem{Huang1}
H. Huang and Y. Zhao, Degree versions of the Erd\H{o}s–Ko–Rado theorem and Erd\H{o}s hypergraph matching conjecture, \newblock {\em J. Combin. Theory Ser. A} 150 (2017), 233--247.

\bibitem{Huang2}
H. Huang and Y. Zhang, A $d$-degree generalization of the Erd\H{o}s–Ko–Rado theorem, \newblock {\em J. Combin. Theory Ser. A} 221 (2026) 106163.

\bibitem{liu}
H. Liu, M. Lu and Y. Zhang, Shadows of Uniform Hypergraphs under a Minimum Degree Condition, https://arxiv.org/abs/2605.02610.

\bibitem{Lovász}
L. Lovász, Combinatorial Problems and Exercises, 13.31 (North-Holland, Amsterdam, 1979).





\bibitem{Kha1} I. Khan, Perfect matching in 3-uniform hypergraphs with large vertex degree, \newblock {\em SIAM J. Discrete Math.} 27 (2013), 1021--1039.



\bibitem{Kuhn1} D. K\"{u}hn and D. Osthus, Matchings in hypergraphs of large minimum degree, \newblock {\em J. Graph Theory} 51 (2006), 269--280.



\bibitem{Kup}
A. Kupavskii, Degree versions of theorems on intersecting families via stability, {\em J. Combin. Theory Ser. A} 168 (2019), 272--287.

\bibitem{Lo}
A. Lo and K. Markstr\H{o}m, ${\ell}$-Degree Tur\'{a}n Density, \newblock {\em SIAM J. Discrete Math.} 28 (2014), 1214--1225.

\bibitem{Lo1}
A. Lo and Y. Zhao, Codegree Turán Density of Complete $r$-Uniform Hypergraphs, \newblock {\em SIAM J. Discrete Math.}, 32 (2018), 1154--1158.

\bibitem{Mar} K. Markstr\"{o}m and A. Ruci\'{n}ski, Perfect matchings (and Hamilton cycles) in hypergraphs with large degrees, \newblock {\em European J. Combin.} 32 (2011), 677--687.

\bibitem{Mu} D. Mubayi and Y. Zhao, Co-degree density of hypergraphs, \newblock {\em J. Combin. Theory Ser. A} 114 (2007), 1118--1132.

\bibitem{Pik} O. Pikhurko, Perfect matchings and $K^3_4$-tilings in hypergraphs of large codegree, \newblock {\em Graphs Combin.} 24 (2008), 391--404.



\bibitem{Rod2} V. R\"{o}dl, A. Ruci\'{n}ski and E. Szemer\'{e}di, Perfect matchings in uniform hypergraphs with large minimum degree, \newblock {\em European J. Combin.} 27 (2006), 1333--1349.


\bibitem{Rod3} V. R\"{o}dl, A. Ruci\'{n}ski and E. Szemer\'{e}di, Perfect matchings in large uniform hypergraphs with large minimum collective degree, \newblock {\em J. Combin. Theory Ser. A} 116 (2009), 613--636.

\bibitem{Si}
A. Sidorenko, Extremal Problems on the Hypercube and the Codegree Tur\'{a}n Density of Complete $r$-Graphs, \newblock {\em SIAM J. Discrete Math.} 32 (2018), 2667--2674.

\bibitem{TrZh12} A. Treglown and Y. Zhao, Exact minimum degree thresholds for perfect matchings in uniform hypergraphs,
\newblock {\em J. Combin. Theory Ser. A} 119 (2012), 1500--1522.

\bibitem{TrZh13}
A.~Treglown and Y. Zhao,
\newblock Exact minimum degree thresholds for perfect matchings in uniform
  hypergraphs {II},
\newblock {\em J. Combin. Theory Ser. A} 120 (2013), 1463--1482.



\bibitem{wang} Y. Wang and Y. Zhang, Vertex degree sums for perfect matchings in 3-uniform hypergraphs, \newblock {\em Discrete Math.} 348 (2025), 114564.

\bibitem{Yi4} Y. Zhang and M. Lu, Vertex degree sums for matchings in 3-uniform hypergraphs,  \newblock {\em Discrete Math.} 347 (2024), 113959.

\bibitem{zhang} Y. Zhang, Y. Zhao and M. Lu, Vertex degree sums for perfect matchings in 3-uniform hypergraphs,
\newblock {\em Electron. J. Combin.} 25 (2018), P3.45.

\bibitem{zhang2} Y. Zhang, Y. Zhao and M. Lu, Vertex degree sums for matchings in 3-uniform hypergraphs,
\newblock {\em Electron. J. Combin.} 26 (2019), P4.5.


\end{thebibliography}
\end{document}